\numberwithin{equation}{section}
\newtheorem{theo}{Theorem}[section]
\newtheorem{lem}[theo]{Lemma}
\newtheorem{prop}[theo]{Proposition}
\theoremstyle{definition}
\newtheorem{defi}[theo]{Definition}
\newtheorem{rmk}{Remark}
\newtheorem{assump}{Assumption}[section]
\newcommand{\eps}{\varepsilon}
\newcommand{\N}{\mathbb{N}}
\newcommand{\R}{\mathbb{R}}
\newcommand{\Z}{\mathbb{Z}}
\newcommand{\Q}{\mathbb{Q}}
\DeclareMathOperator\supp{supp}
\def\1{\mathbbm{1}}
\let\mc=\mathcal
\def\F#1{\mc{F}_{e,#1}}
\newenvironment{formula}[1]{\begin{equation}\label{#1}}
{\end{equation}\noindent}
\def\Fi#1{\begin{formula}{#1}}
	\def\Ff{\end{formula}\noindent}
\begin{document}
\title{\bf Pulsating solutions for multidimensional bistable and multistable equations}

\author{Thomas Giletti$^{\,\hbox{\small{a}}}$ \ and \ Luca Rossi$^{\,\hbox{\small{b}}}$\\
\\
\footnotesize{$^{\,\hbox{\small{a}}}$ Univ. Lorraine, IECL UMR 7502, Vandoeuvre-lès-Nancy, France} \\
\footnotesize{$^{\,\hbox{\small{b}}}$ CNRS, EHESS, PSL Research University, CAMS, Paris, France}}
\date{}

\maketitle


\abstract{We devote this paper to the issue of existence of pulsating travelling front solutions
 for spatially periodic heterogeneous reaction-diffusion equations in arbitrary dimension, in both bistable and more general multistable frameworks. 
 In the multistable case, the notion of a single front is not sufficient to understand 
 the dynamics of solutions, and
 we instead observe the appearance of a so-called propagating terrace. This roughly refers to a finite family of stacked fronts connecting intermediate stable steady states whose speeds are ordered. Surprisingly, for a given equation,
the shape of this terrace (i.e., the involved intermediate states or even the cardinality of the 
family of fronts) may depend on the direction of propagation.}

\section{Introduction}\label{sec:intro}

In this work we consider the reaction-diffusion equation
\Fi{eq:parabolic}
\partial_t u = \text{div} (A(x) \nabla u ) + f (x,u), \quad t \in \R, \ x \in \R^N   ,
\Ff
where $N \geq 1$ is the space dimension.
The diffusion matrix field $A = (A_{i,j})_{1 \leq i,j \leq N}$ is always assumed to be smooth 
and to satisfy the ellipticity condition
\Fi{eq:ellipticity}
\exists C_1, C_2 \in (0,\infty), \quad \forall x,\xi \in \R^N,
\quad C_1 |\xi|^2 \leq \sum_{i,j} A_{i,j} (x) \xi_i \xi_j \leq C_2 |\xi|^2.\Ff
As far as the regularity of the reaction term $f(x,u)$ is concerned, we assume that it is at least globally Lipschitz continuous (a stronger hypothesis will be made in the general multistable case; see below).

Equation~\eqref{eq:parabolic} is spatially heterogeneous. As our goal is to construct travelling fronts, 
i.e., self-similar propagating solutions, we impose a spatial structure on the heterogeneity. More precisely, we assume that the 
terms in the equation are all periodic in space, with the same period. 
For simplicity and without loss of generality up to some change of variables, we choose the periodicity cell to be $[0,1]^N$, that is,
\Fi{eq:periodicity}
\forall L \in \Z^N , \quad A (\cdot + L) \equiv A (\cdot) , 
 \ f(\cdot + L ,\cdot) \equiv f(\cdot, \cdot).
\Ff
From now on, when we say that a function is periodic, we always understand that
its period is $(1,\dots,1)$.

In the spatially periodic case, one can consider 
the notion of \textit{pulsating travelling front}, which we shall recall precisely below. Roughly, these are entire in time solutions which connect periodic steady states of the parabolic equation~\eqref{eq:parabolic}. The existence of such solutions is therefore deeply related to the underlying structure of \eqref{eq:parabolic} and its steady states.

In this paper, we shall always assume that \eqref{eq:parabolic} admits at least two spatially periodic steady states:
the constant~0 and a positive state~$\bar p(x)$. Namely, we assume that
$$f( \cdot, 0) 
 \equiv 0,$$
as well as
$$\left\{ \begin{array}{l}
\text{div} (A(x) \nabla \bar p) + f(x,\bar p) = 0, \vspace{3pt}\\
\forall L \in \Z^N, \quad \bar p(\cdot + L) \equiv \bar p >0.
\end{array}
\right.$$
We shall restrict ourselves to solutions~$u(t,x)$ of \eqref{eq:parabolic} that satisfy the inequality
$$0 \leq u \leq \bar p.$$
Notice that, as far the Cauchy problem is concerned, owing to the parabolic comparison principle, it is sufficient to assume that the above property is fulfilled by the initial datum (we restrict ourselves to bounded solutions, 
avoiding in this way situations where the comparison principle fails).
Let us also mention that 0 could be replaced by a spatially periodic steady state; we make this choice to keep the presentation simpler.

The steady states $0$ and $\bar p$ will be assumed to be asymptotically stable; we shall recall what this means in a moment. Then we distinguish the situation where these are the unique periodic steady states (bistable case)
to that where there is a finite number of intermediate stable states (multistable case).
In the latter, we will strengthen the stability condition.
\begin{assump}[Bistable case]\label{ass:bi}
	The functions $0$ and $\bar p$ are the unique asymptotically stable 
	periodic steady states of \eqref{eq:parabolic}.
	
	Furthermore, there does not exist any pair $q$, $\tilde q$ of periodic steady states of 
	\eqref{eq:parabolic} such that $0<q<\tilde q<\bar p$. 
\end{assump}
\begin{assump}[Multistable case]\label{ass:multi}
The function $\partial_u f(x,u) $ is well-defined and continuous. There is a finite number of asymptotically stable periodic steady states, among which~0 and~$\bar p$, 
and they are all linearly stable.

Furthermore,
for any pair of ordered periodic steady states $q<\tilde q$, there is a linearly stable periodic steady state $p$ such that $q \leq p \leq\tilde q$. 
\end{assump}
The main difference between these two assumptions is that only the latter allows the existence of intermediate stable steady states. 
As we shall see, the presence of such intermediate states might prevent
the existence of a pulsating travelling front connecting directly the two extremal steady states 0 and $\bar p$. More complicated dynamics involving a family of travelling fronts, which we refer to as a \textit{propagating terrace}, may instead occur.
We emphasize that the stable states in Assumption~\ref{ass:multi} are not necessarily ordered.

Let us recall the different notions of stability.
A steady state $p$ is said to be {\em asymptotically stable} if its
{\em basin of attraction} contains an open neighbourhood of $p$
in the $L^\infty(\R^N)$ topology; the basin of attraction of $p$
refers to the set of initial data for which the solution of the Cauchy problem
associated with~\eqref{eq:parabolic} converges uniformly to $p$
as $t \to +\infty$.

A periodic state $p$ is said to be {\em linearly stable} (resp.~{\em unstable}) if the linearized operator around $p$, i.e.,
$$\mathcal{L}_p w := \text{div} (A(x)\nabla  w ) + 
 \partial_u f (x, p(x)) w,$$
has a negative (resp.~positive)
principal eigenvalue in the space of periodic functions. Owing to the 
regularity of $f$ from Assumption~\ref{ass:multi}, it is
rather standard to construct sub- and supersolutions using the principal eigenfunction
and to use them to show that linear stability implies asymptotic stability. 
The converse is not true in general;
this is why the bistable Assumption~\ref{ass:bi} is not a particular case of Assumption~\ref{ass:multi}.

We also point out that
the second part of Assumption~\ref{ass:bi} automatically prevents the existence of intermediate
asymptotically stable steady states, thanks to a crucial result in dynamical systems due to Dancer and Hess~\cite{Tri}
known as ``order interval trichotomy''; see also~\cite{Matano}.
We recall such result in Theorem~\ref{DH} in the Appendix.

\begin{rmk}
In the case of the spatially-invariant equation
\Fi{eq:homo}
\partial_t u = \Delta u  + f (u), \quad t \in \R, \ x \in \R^N, 
\Ff
Assumption \ref{ass:bi} is fulfilled if and only if $\bar p$ is constant, say, equal to $1$,
and there exists $\theta\in(0,1)$ 
such that $f(u)<0$ for $u\in(0,\theta)$
and $f(u)>0$ for $u\in(\theta,1)$.
This is shown in Lemma \ref{lem:f0} below and the subsequent remark. 
Then, with the same arguments, one can readily check that
Assumption~\ref{ass:multi}
is equivalent to require that $f\in C^1([0,1])$ and that it has an odd (finite) number of zeroes 
such that, counting
from smallest to largest, the
odd ones (which include  $0$, $1$) satisfy $f'<0$ (these are the only stable periodic steady~states).
\end{rmk}

With a slight abuse of terminology, in the sequel we shall simply refer to the
asymptotic stability as ``stability''.
Then a solution will be said to be ``unstable'' if it is not (asymptotically)~stable.

\paragraph{The notion of pulsating fronts and terraces} Let us first recall the notion of \textit{pulsating travelling front}, which is the extension to the periodic framework of the usual notion of travelling front. We refer to~\cite{Xin-91} for an early introduction of this concept.

\begin{defi}\label{def:puls_front}
A {\em pulsating travelling front} for~\eqref{eq:parabolic} is an entire in time solution of the type
$$u(t,x) = U ( x , x\cdot e -ct),$$
where $c \in \mathbb{R}$,  $e \in \mathbb{S}^{N-1}$, the function $U(x,z)$ is periodic in the $x$-variable and satisfies
$$U (\cdot, - \infty) \equiv q_1 (\cdot) >  U(\cdot, \cdot)  > U (\cdot, +\infty) \equiv q_2 (\cdot).$$
Furthermore, we call $c$ the speed of the front, the vector $e$ its direction, and we say that $U$ connects 
$q_1$ to $q_2$.
\end{defi}
\begin{rmk}\label{rmk:trap}
The functions $q_1$, $q_2$ in the above definition are necessarily two steady states 
of \eqref{eq:parabolic}. 
Let us also point out that the change of variables $(t,x) \mapsto (x , x\cdot e - ct)$ is only invertible when $c \neq 0$, so that one should a priori carefully distinguish  both functions~$u$ and~$U$.
\end{rmk}

In the bistable case, our goal is to construct a pulsating front connecting 
$\bar p$ to $0$. Let us reclaim a few earlier results. In~\cite{Xin-91}, a pulsating front was already constructed in the special case where coefficients are close to constants. Yet dealing with more general heterogeneities turned out to be much more difficult, and only recently a pulsating front was constructed in~\cite{FZ} for the one-dimensional case, through an abstract framework 
which is similar to the one considered in the present work.
Higher dimensions were tackled in~\cite{Ducrot} under an additional nondegeneracy assumption and with a more PDE-oriented approach in the spirit of \cite{BH02}.

However, as mentioned before,
the notion of pulsating travelling front does not suffice to describe the dynamics 
in the more general multistable case. 
The good notion in such case is that of a propagating terrace, as defined in~\cite{DGM,GM}.
An earlier equivalent notion, called
 \textit{minimal decomposition}, was introduced in~\cite{FifMcL77} in
 the homogeneous case.
\begin{defi}\label{def:terrace}
A {\em propagating terrace} connecting $\bar p$ to $0$ 
in the direction $e \in \mathbb{S}^{N-1}$ is a couple of two finite sequences $(q_j)_{0 \leq j \leq J}$ and $(U_j)_{1 \leq j \leq J}$ such that:
\begin{itemize}
\item the functions $q_j$ are periodic steady states of \eqref{eq:parabolic} and satisfy
	$$\bar p \equiv q_0 > q_1 > \cdots > q_J \equiv 0;$$
\item for any $1 \leq j \leq J$, the function $U_j$ is a pulsating travelling front of 
\eqref{eq:parabolic} connecting $q_{j-1}$ to $q_j$ 
with speed $c_j \in \mathbb{R}$ and direction $e$;
\item the sequence $(c_j)_{1 \leq j \leq J}$ satisfies
	$$c_1 \leq c_2 \leq \cdots \leq c_J .$$
\end{itemize} 
\end{defi}

Roughly speaking, a propagating terrace is a superposition of pulsating travelling fronts 
spanning the whole range from $0$ to $\bar p$. 
We emphasize that the ordering of the speeds of the fronts involved in a propagating terrace is essential. 
Indeed, while there may exist many families of steady states and fronts satisfying the first two 
conditions in Definition \ref{def:terrace}, 
only terraces can be expected to describe the large-time behaviour of solutions of the 
Cauchy problem associated to \eqref{eq:parabolic}, see~\cite{DGM},
which makes them more meaningful.

\paragraph{Main results} Before stating our theorems, let us also recall a result by Weinberger.
\begin{theo}[Monostable case \cite{W02}]\label{thm:mono}
Let $p>q$ be two periodic steady states of \eqref{eq:parabolic}, and assume that 
any periodic function $u_0\in C(\R^N)$ satisfying $q\leq u_0\leq p$, $u_0\not\equiv q$, 
lies in the basin of attraction of $p$.
Then, for any $e \in \mathbb{S}^{N-1}$,
 there is some $c^*  \in \mathbb{R}$ such that
 a pulsating travelling front in the direction~$e$ with speed $c$ connecting $p$ to $q$
 exist if and only if $c \geq c^*$.
\end{theo}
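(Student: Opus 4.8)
The plan is to construct pulsating fronts of speed $c^*$ and then handle the two regimes $c > c^*$ and $c < c^*$ separately. First, I would set up the problem on a bounded domain by fixing a direction $e$ and looking for $U(x,z)$ periodic in $x$, decreasing from $p$ to $q$ in $z$, solving the elliptic equation obtained by substituting $u = U(x, x\cdot e - ct)$ into \eqref{eq:parabolic}. The natural approach is a sub/supersolution or a monotone iteration scheme on a family of truncated problems: for each $a>0$, solve the equation on the slab $\{|z| < a\}$ with boundary data $U = p$ on $z = -a$, $U = q$ on $z = a$, and with a normalization (e.g. fixing the value of $U$ at some interior point to be a fixed intermediate level) that pins down both $c$ and the translate of $U$. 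The monostable hypothesis, namely that every periodic $u_0$ with $q \le u_0 \le p$, $u_0 \not\equiv q$, lies in the basin of $p$, is exactly what rules out any nonconstant intermediate periodic steady state and forces the limiting profile obtained by letting $a \to \infty$ to have the right limits $q$ and $p$ rather than collapsing onto a steady state in between. Passing to the limit $a\to\infty$ by elliptic estimates and a diagonal argument yields a front; its speed $c^*$ is defined as the infimum over all speeds of admissible fronts, and one shows this infimum is attained and is a genuine minimal speed.

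For the ``if'' direction with $c > c^*$, I would again use the truncated-domain construction but now prescribe the speed $c$ in advance (dropping the normalization that fixed $c$, and instead allowing the translate to be determined). The key point is that for supercritical speeds one can build global sub- and supersolutions: a supersolution from a suitable shift of the $c^*$-front combined with exponential corrections, and a subsolution from the linearization at $q$ using the principal eigenvalue of the periodic parabolic operator $\mathcal{L}_q$ — the monostability guarantees this principal eigenvalue is negative, which produces, for every $c$ above the threshold determined by the dispersion relation, an exponentially decaying subsolution near $q$. Squeezing between them and using the sliding method / strong maximum principle gives a monotone front of speed exactly $c$.

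For the ``only if'' direction, suppose a front of speed $c < c^*$ exists; I would derive a contradiction by a comparison argument. One places the front as initial datum (or uses it directly as an entire solution) and compares with the solution of the Cauchy problem started from a front-like datum that is compactly supported near $p$; by Weinberger's spreading result the latter invades $q$ at asymptotic speed exactly $c^*$ in direction $e$, so a front moving strictly slower than $c^*$ would eventually be overtaken, violating the comparison principle since both are confined between $q$ and $p$ and connect the same states. Alternatively, and perhaps more cleanly, one uses the characterization of $c^*$ as the minimal speed directly: any front profile gives, via its exponential decay rate at $q$, a speed that must satisfy the dispersion inequality $c \ge c^*$.

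The main obstacle I expect is the construction step on the unbounded domain: ensuring that the limit of the truncated solutions does not degenerate — i.e. that it genuinely connects $q$ to $p$ and is not identically a steady state — and simultaneously obtaining the sharp threshold value $c^*$ with the correct variational/dispersion characterization. This is where the full strength of the monostable basin-of-attraction hypothesis, together with Harnack inequalities and the spectral theory of periodic elliptic operators, must be brought to bear; the compactness and boundary-layer analysis as $a \to \infty$ is delicate precisely because one must prevent the profile from sliding off to $\pm\infty$ or flattening to an intermediate level. Since, however, this is Weinberger's theorem and we are permitted to invoke it as already established, in the present paper it suffices to cite \cite{W02}; the above is the route one would follow to reprove it.
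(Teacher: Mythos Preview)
The paper does not prove this theorem; it is stated as Weinberger's result and simply cited from~\cite{W02}. You correctly note this at the end of your proposal, so in that sense your ``proof'' (namely, cite~\cite{W02}) matches the paper exactly.

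That said, the route you sketch is \emph{not} Weinberger's. Weinberger works via a discrete-time recursion: he defines the time-one map of the parabolic flow, builds the iterative sequence $(a_{c,n})_n$ starting from a front-like seed (exactly the scheme reproduced in Sections~\ref{sec:discrete}--\ref{sec:c^*} of the present paper), shows that the threshold speed $c^*$ separating $a_c\equiv p$ from $a_c\not\equiv p$ is well-defined, and then, in the monostable case, extracts fronts for every $c\geq c^*$ as limits of this iteration. Your proposal instead follows the elliptic-regularization / truncated-slab strategy in the spirit of Berestycki--Hamel~\cite{BH02} or Ducrot~\cite{Ducrot}. Both approaches are legitimate for monostable problems, but they are genuinely different machines: Weinberger's is order-theoretic and dynamical, yours is PDE-variational.

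Two concrete issues with your sketch. First, you write that ``the monostability guarantees this principal eigenvalue is negative''; this is backwards --- $q$ is the \emph{unstable} state, so the principal eigenvalue of $\mathcal{L}_q$ is nonnegative (positive in the nondegenerate case), and it is precisely this that produces exponential sub/supersolutions and the dispersion relation. More seriously, the hypothesis of the theorem is purely a basin-of-attraction statement and does \emph{not} give you linear instability of $q$; your spectral argument for supercritical speeds therefore needs an extra hypothesis or a different mechanism. Second, your ``only if'' argument invokes Weinberger's spreading result to rule out $c<c^*$, but in~\cite{W02} the spreading speed and the minimal front speed are established together from the same recursion, so citing one to prove the other is circular unless you set up the spreading speed independently.
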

Assumptions~\ref{ass:bi} or~\ref{ass:multi} allow us to apply this theorem 
around any given unstable periodic state~$q$ between $0$ and $\bar p$. 
To check the hypothesis of Theorem~\ref{thm:mono}, 
fix $x_0\in \R^N$ and let $p_+ >q$ be a stable state realizing the following minimum:
$$ \min\{p(x_0) : q<p<\bar p \ \ \mbox{and $p$ is a periodic stable state} \} .$$
Note that $p_+$ exists since we always assume that there is a finite number of  stable 
periodic steady states. By either Assumptions~\ref{ass:bi} of~\ref{ass:multi}, 
there does not exist any periodic steady state between $q$ and $p_+$. 
Because of this, and the stability of $p_+$, 
only the case~$(b)$ of the order interval trichotomy Theorem~\ref{DH} is allowed.
Namely, there exists a spatially periodic solution $u$ of \eqref{eq:parabolic} such that $u(k, \cdot)\to q$ as $k\to-\infty$ and $u (k,\cdot) \to p_+$ as $k \to +\infty$. By comparison principles, this implies that any periodic initial datum $q \leq u_0 \leq p_+$ with $u_0 \not \equiv q$ lies in the basin of attraction of 
$p_+$. We can therefore apply Theorem~\ref{thm:mono} and find a minimal 
speed $\overline{c}_q$ of fronts in a given direction 
$e\in\mathbb{S}^{N-1}$ connecting 
$p_{+}$ to~$q$. Applying the same arguments to \eqref{eq:parabolic} with
$f(x,u)$ replaced by $-f(x,-u)$,
we find a minimal speed $c '_q$ of fronts  $\tilde U$ in the direction $-e$ connecting $-p_{-}$ to $-q$, 
where~$p_{-}$ is the largest stable periodic steady state lying below~$q$.
Hence, $\underline{c}_q:=-c '_q$ is the maximal speed of fronts $U(x,z):=-\tilde U(x,-z)$ for~\eqref{eq:parabolic}
in the direction $e$ connecting~$q$ to~$p_{-}$.

After these considerations, we are in a position to state our last assumption.
\begin{assump}\label{ass:speeds}
For any unstable periodic steady state $q$ between $0$ and $\bar p$ and any $e\in\mathbb{S}^{N-1}$, there holds that
$$\overline{c}_q > \underline{c}_q,$$
where $\overline{c}_q$ and $\underline{c}_q$ are defined above.
\end{assump}
Notice that under the bistable Assumption~\ref{ass:bi}, clearly $p_{+} \equiv \bar p$ and $p_{-}\equiv0$. 
Therefore, in that case, Assumption~\ref{ass:speeds} means that pulsating fronts connecting $\bar p$ to
an intermediate state $q$ have to be strictly faster than pulsating fronts connecting~$q$ to~$0$. We point out that this hypothesis, though implicit, was already crucial in the earlier existence results for bistable pulsating fronts; see~\cite{Ducrot,FZ} where it was referred to as the \textit{counter-propagation} assumption.

When $u \mapsto f(x,u)$ is $C^1$, a sufficient condition ensuring Assumption~\ref{ass:speeds} is that $q$ is {\em linearly}
unstable.
In such a case there holds that $\overline{c}_q >0> \underline{c}_q$, as shown in
Proposition~\ref{prop:counter} in the Appendix.
We also show there for completeness that if $q$ is just unstable then
$\overline{c}_q \geq 0 \geq \underline{c}_q$. 
The fact that the minimal speed in a monostable problem cannot be 0 seems 
to be a natural property. Besides the non-degenerate ($q$ linearly unstable) case,
it is known to hold for homogeneous equations as well as
for some special (and more explicit) bistable equations, 
c.f.~\cite{DHZ,FZ} and the references therein.
However, as far as we know, it remains an open problem in general.

Our first main result concerns the bistable cases.
\begin{theo}[Bistable case]\label{th:bi}
If Assumptions~\ref{ass:bi} and~\ref{ass:speeds} are satisfied, then for any $e \in \mathbb{S}^{N-1}$, there exists a monotonic in time 
pulsating travelling front connecting $\bar p$ 
to $0$ in the direction $e$ with some speed $c (e) \in \mathbb{R}$.
\end{theo}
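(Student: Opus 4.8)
\emph{Proof idea.} Fix $e\in\mathbb{S}^{N-1}$. The plan is to first produce a propagating terrace connecting $\bar p$ to $0$ in the direction $e$, and then to show that Assumptions~\ref{ass:bi} and~\ref{ass:speeds} force this terrace to consist of a single front, which will be the one we seek.

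\emph{Step 1: construction of a propagating terrace.} I would construct a propagating terrace $((q_j)_{0\le j\le J},(U_j)_{1\le j\le J})$ connecting $\bar p$ to $0$ in the direction $e$, with each $U_j$ monotone in time. The natural route is to study the large-time behaviour of the solution $u$ of~\eqref{eq:parabolic} issued from a step-like initial datum $u_0$ that is periodic in the directions orthogonal to $e$, nonincreasing along $e$, with $u_0\to\bar p$ as $x\cdot e\to-\infty$ and $u_0\to0$ as $x\cdot e\to+\infty$, and chosen to be a subsolution, so that $t\mapsto u(t,\cdot)$ is nondecreasing and $0\le u\le\bar p$. Such a datum can be obtained by gluing, with a large shift along $e$, a periodic subsolution below $\bar p$ lying in its basin of attraction and a periodic supersolution above $0$ lying in its basin, both provided by the asymptotic stability of the extreme states. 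From the bounded orbit $\{u(t,\cdot)\}_{t\ge0}$ one then extracts, in suitable moving coordinates of the form $x\cdot e-ct$ and using interior parabolic estimates, limiting profiles that are pulsating fronts between periodic steady states, in the spirit of~\cite{DGM,GM}; Weinberger's Theorem~\ref{thm:mono} describes the admissible fronts across the monostable regimes delimited by the unstable intermediate steady states (its hypothesis being checked through the order-interval trichotomy, exactly as explained before Assumption~\ref{ass:speeds}), while sliding and monotonicity arguments force these profiles to organise into a finite family with ordered speeds, i.e.\ into a propagating terrace whose fronts inherit the time monotonicity of $u$.

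\emph{Step 2: collapse of the terrace.} Suppose $J\ge2$. For $1\le j\le J-1$ the function $q_j$ is a periodic steady state with $0<q_j<\bar p$, hence by the first part of Assumption~\ref{ass:bi} it is not stable, i.e.\ it is unstable. If $J\ge3$, then $q_{J-1}$ and $q_{J-2}$ are two periodic steady states with $0<q_{J-1}<q_{J-2}<\bar p$, contradicting the second part of Assumption~\ref{ass:bi}. Hence $J=2$ and $q_1$ is an unstable periodic steady state with $0<q_1<\bar p$. Since $0$ and $\bar p$ are the only stable periodic states, the quantities attached to $q=q_1$ before Assumption~\ref{ass:speeds} satisfy $p_+\equiv\bar p$ and $p_-\equiv0$; consequently $U_1$ connects $\bar p$ to $q_1$ in the direction $e$, so $c_1\ge\overline{c}_{q_1}$, whereas $U_2$ connects $q_1$ to $0$ in the direction $e$, so $c_2\le\underline{c}_{q_1}$. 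Together with the ordering $c_1\le c_2$ this yields $\overline{c}_{q_1}\le\underline{c}_{q_1}$, contradicting Assumption~\ref{ass:speeds}. Therefore $J=1$, and $U_1$ is a pulsating travelling front, monotone in time, connecting $\bar p$ to $0$ in the direction $e$; setting $c(e):=c_1$ finishes the proof.

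\emph{Main obstacle.} The collapse carried out in Step~2 is a short consequence of the two assumptions; the real difficulty lies in Step~1, that is, in establishing that the large-time dynamics of~\eqref{eq:parabolic} in a given direction is indeed governed by a finite stack of monotone-in-time pulsating fronts with ordered speeds. One must show that a genuine propagation speed exists (no oscillation of the interface), work with pulsating rather than planar profiles because of the spatial heterogeneity, and carry out the limiting procedure with only Lipschitz regularity on $f$ and only asymptotic (not linear) stability of $0$ and $\bar p$.
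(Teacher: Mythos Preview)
Your Step~2 is sound and indeed captures exactly how Assumptions~\ref{ass:bi} and~\ref{ass:speeds} interact: an intermediate platform $q_1$ would be unstable, and the terrace ordering $c_1\le c_2$ would contradict $\overline c_{q_1}>\underline c_{q_1}$. The paper uses the same speed comparison, but in a different place (see below).

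The genuine gap is Step~1. You propose to obtain the terrace by following the large-time dynamics of a well-chosen Cauchy problem ``in the spirit of~\cite{DGM,GM}''. Those works rely essentially on the Sturm--Liouville zero-number argument and the associated steepness comparison, which are specific to one space dimension; the paper states this explicitly after Theorem~\ref{th:multi}. In the multidimensional periodic setting there is no known mechanism ensuring that limits along moving frames of a monotone-in-time solution are \emph{fronts} rather than, say, nontrivial time-periodic objects or fronts connecting the wrong pair of states, nor that the extracted profiles can be organised with ordered speeds. Your ``sliding and monotonicity arguments'' sentence is precisely the content of the theorem, and no available tool delivers it along this route. (As a minor point, gluing a periodic subsolution near $\bar p$ with a periodic supersolution near $0$ does not in general produce a global subsolution, so even the monotone-in-time starting point needs more care.)

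The paper's actual proof is quite different and does not build a terrace first. It works with the discrete evolution operators $\mathcal F_{e,c}$ of~\eqref{eq:def_mapping}, iterates them from a fixed profile $\phi$ satisfying~\eqref{ass:phi}, and defines $c^*$ as the threshold speed for which the iterates fill up to $\bar p$ (Lemmas~\ref{lem:ac1}--\ref{lem:ac2}). For $c<c^*$ one chooses an index $n(c)$ and a shift $z_{c,n(c)}$ so that the ``crossing level'' moves by at most $2\sqrt{c^*-c}$ over $1/\sqrt{c^*-c}$ iterations (Lemma~\ref{lem:n(c)}), then passes to a relaxed limit as $c\nearrow c^*$ (Lemmas~\ref{lem:diagonal}--\ref{lem:limit1}). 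Iterating $\mathcal F_{e,c^*}$ on the limit yields a single discrete front $U^*$ connecting $\bar p$ to some periodic steady state $p^*$ (Lemma~\ref{lem:U1}); the very comparison you use in Step~2 is applied \emph{here} to show $p^*$ is stable, hence $p^*\equiv 0$ under Assumption~\ref{ass:bi}. A time-step refinement $\tau_k=2^{-k}$ then upgrades the discrete front to a genuine pulsating front (Section~\ref{sec:continuous}). So the collapse you perform a posteriori on a terrace is carried out in the paper a priori, inside the construction of a single front.
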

This theorem slightly improves the existence result of~\cite{Ducrot}, which 
additionally requires the stability or instability of the steady states to be linear. 
However, we emphasize that our argument is completely different: 
while in~\cite{Ducrot} the proof relies on an elliptic regularization technique, 
here we proceed through a time discretization and a dynamical system approach. 

\begin{rmk}
Our previous theorem includes the possibility of a front with zero speed.
However, there does not seem to be a unique definition of a pulsating front with zero speed in the 
literature, mainly because the change of variables $(t,x) \mapsto (x,x\cdot e-ct)$ 
is not invertible when $c=0$. 
Here, by Definition~\ref{def:puls_front} a front with zero speed is simply
a stationary solution $u(x)$ with asymptotics $u (x) - q_{1,2} \to 0$ as $x \cdot e \to \mp \infty$. As a matter of fact, in the zero speed case our approach provides the additional
property that there exists a function $U$ as in Definition~\ref{def:puls_front}, such that 
$u(t,x) = U (x, x \cdot e + z)$ 
solves~\eqref{eq:parabolic} for any $z \in \mathbb{R}$. However, this function $U$ lacks any regularity, so that in particular it is not a standing pulsating wave in the sense of~\cite{Ducrot}.
\end{rmk}

\begin{theo}[Multistable case]\label{th:multi}
If Assumptions~\ref{ass:multi} and~\ref{ass:speeds} are satisfied, then for any $e\in \mathbb{S}^{N-1}$, there exists a propagating terrace $((q_j)_j, (U_j)_j)$ connecting $\bar p$ 
to $0$ in the direction~$e$.

Furthermore, all the $q_j$ are stable steady states and all the fronts $U_j$ are monotonic in time.\end{theo}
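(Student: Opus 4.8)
The plan is to reduce the multistable problem to a finite collection of bistable ones, by decomposing the order interval $[0,\bar p]$ along the intermediate stable steady states. The first step is to extract a suitable chain of stable states. Starting from the pair $\{0,\bar p\}$ and iterating the second (density) part of Assumption~\ref{ass:multi}, together with the finiteness of the set of stable periodic steady states, one obtains a maximal totally ordered family $0=r_0<r_1<\cdots<r_m=\bar p$ of linearly stable periodic steady states, ``maximal'' meaning that no stable state lies strictly between two consecutive ones. A short argument — using the density part of Assumption~\ref{ass:multi} on hypothetical pairs of intermediate steady states (two of those would force a stable state between them, contradicting maximality), and the order interval trichotomy of Theorem~\ref{DH} to exclude the case of none (two stable states cannot bound an order interval with no steady state inside) — shows that between any $r_{i-1}$ and $r_i$ there is exactly one other periodic steady state, necessarily unstable; call it $\theta_i$. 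Thus each layer $[r_{i-1},r_i]$ is genuinely bistable, with a single interior state.

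Second, on each layer I would produce a time-monotone pulsating front from $r_i$ to $r_{i-1}$ in the direction $e$, via Theorem~\ref{th:bi}. Concretely, perform the $x$-dependent change of unknown $v=u-r_{i-1}$, which turns \eqref{eq:parabolic} into an equation of the same form with reaction $g_i(x,v)=f(x,v+r_{i-1})-f(x,r_{i-1})$ satisfying $g_i(\cdot,0)\equiv0$ and vanishing also at $v=r_i-r_{i-1}>0$; then modify $g_i$ for $v$ outside the range $[0,r_i-r_{i-1}]$ so that $0$ and $r_i-r_{i-1}$ become the only stable states and no pair of periodic steady states lies strictly between them. The ``exactly one interior state'' fact above is precisely what makes the modified equation satisfy Assumption~\ref{ass:bi}; equivalently, one may simply note that the proof of Theorem~\ref{th:bi} only ever involves solutions valued in the relevant range and so applies to the layer directly. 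Either way, one must still check Assumption~\ref{ass:speeds} for the layer, which, the modification leaving the equation untouched near $\theta_i$, $r_{i-1}$, $r_i$, reduces to the strict inequality between the associated monostable minimal and maximal speeds; deriving this from Assumption~\ref{ass:speeds} applied to $\theta_i$ in the original equation requires knowing that the states $p_\pm$ attached to $\theta_i$ may be taken equal to $r_i$, $r_{i-1}$ — and arranging this (or bypassing it) is a genuine technical point, since stable states incomparable to $r_i$ but sitting just above $\theta_i$ can interfere, which is presumably also why the terrace shape may depend on $e$. Granting this, Theorem~\ref{th:bi} yields a time-monotone pulsating front $U_i$ connecting $r_i$ to $r_{i-1}$ with a uniquely determined speed $c^{(i)}$.

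Finally, setting $q_j:=r_{m-j}$ and letting $U_j$ be the front from $q_{j-1}$ to $q_j$ (so $J=m$), there remains the ordering $c_1\le c_2\le\cdots\le c_J$ of the speeds, and I expect this to be the main obstacle: the speeds $c^{(i)}$ attached to an \emph{arbitrary} maximal chain need not be monotone, so a terrace cannot simply be read off such a chain, and one must argue more globally. The natural route is a sliding and comparison argument — if a front sat below a strictly faster one, the two would eventually overtake, incompatible with both remaining trapped between the fixed steady states $q_{j-1}$, $q_j$, $q_{j+1}$ — with Assumption~\ref{ass:speeds} (through the squeezing $\underline c_{\theta_i}\le c^{(i)}\le\overline c_{\theta_i}$ of the bistable speed by the monostable speeds at $\theta_i$, and the strictness $\overline c_{\theta_i}>\underline c_{\theta_i}$) ruling out the degenerate configurations where this rigidity fails and, if needed, forcing a re-selection of the $r_i$. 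Alternatively, and probably more robustly, one bypasses the black-box use of Theorem~\ref{th:bi} and runs its time-discretized dynamical-systems construction directly on the multistable problem: the limiting profile then exhibits plateaus exactly at the stable states $q_j$, joined by pulsating fronts whose speeds are automatically ordered (a plateau cannot travel faster than what lies above it), Assumption~\ref{ass:speeds} again guaranteeing that no plateau occurs at an unstable state and that each transition is an honest front. In both approaches the delicate part is to control the assembled or limiting object well enough to identify its plateaus with stable steady states and each transition between consecutive plateaus with a genuine pulsating travelling front.
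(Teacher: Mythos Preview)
Your first route---fix a maximal chain of stable states and apply Theorem~\ref{th:bi} layer by layer---is obstructed in exactly the way you anticipate: the correct chain depends on~$e$ (this is the content of Proposition~\ref{prop:asymmetric}), so no a~priori selection can work, and for an arbitrary chain the resulting speeds need not be ordered. The ``exactly one intermediate state'' claim is also not justified (Assumption~\ref{ass:multi} only forces a stable state between \emph{ordered} pairs, so incomparable unstable states between $r_{i-1}$ and $r_i$ are not excluded), though this is secondary. Since you recognise these obstructions yourself, the substance of your proposal is the alternative route.

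That alternative is indeed what the paper does, but the mechanics differ from your ``plateaus'' picture. The construction does not produce a single limiting profile exhibiting several plateaus at once. Rather, the Weinberger-type iterative scheme is run on the full multistable problem and shown (Lemma~\ref{lem:U1}) to yield a single discrete front connecting $\bar p$ to some periodic steady state $p^*<\bar p$; Assumption~\ref{ass:speeds} enters precisely here, through a comparison with the monostable spreading speeds above and below $p^*$, to force $p^*$ to be stable. One then restarts the entire construction on the order interval $[0,p^*]$, obtaining the next front and the next stable state, and iterates. The intermediate states $q_j$ are thus \emph{discovered} by the construction, not prescribed---which is why they can vary with~$e$.

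The speed ordering $c_1\le c_2\le\cdots$ is proved separately (Section~\ref{sec:terrace}) and is, as you suspect, the delicate step; it is not automatic. The argument returns to the sequences that produced $U_1$, captures them at a level set lying between $q_2$ and $q_1$ (rather than near $\bar p$), and passes to the limit to obtain an auxiliary object $\hat\alpha\le q_1$ that is nondecreasing under $\mathcal{F}_{e,c_1}$. One then slides a perturbation $U_{2,\eps}=U_2+\eps\,\Phi$ of the second front over $\hat\alpha$, where $\Phi$ interpolates the principal eigenfunctions at $q_1$ and $q_2$; this is exactly where the \emph{linear} stability in Assumption~\ref{ass:multi} is used, to make $U_{2,\eps}$ a supersolution near the limiting states. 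A contact-point argument then rules out $c_2<c_1$. Your heuristic ``a plateau cannot travel faster than what lies above it'' points in the right direction but does not replace this comparison.
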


Earlier existence results for propagating terraces dealt only with the one-dimensional case, where a Sturm-Liouville zero number and steepness argument is available~\cite{DGM,GM}. We also refer to \cite{Risler} where a similar phenomenon is studied by an energy method in the framework of systems with a gradient structure. As far as we know, this result is completely new in the heterogeneous and higher dimensional case.

The stability of these pulsating fronts and terraces will be the subject of a forthcoming work. Let us point out that, quite intriguingly, the shape of the terrace may vary depending on the direction. More precisely, for different choices of the vector $e$, the terrace may involve different intermediate states $(q_j)_j$; it is even possible that the number of such states varies, as we state in the next proposition.
\begin{prop}\label{prop:asymmetric}
There exists an equation~\eqref{eq:parabolic} in dimension $N=2$
for which Assumptions~\ref{ass:multi},~\ref{ass:speeds} hold and moreover:
\begin{itemize}
\item in the direction $(1,0)$, there exists a unique propagating terrace connecting $\bar p$ to 0, and it consists of exactly two travelling fronts;
\item in the direction $(0,1)$, there exists a unique propagating terrace connecting $\bar p$ to 0, and it consists of a single travelling front.
\end{itemize}
\end{prop}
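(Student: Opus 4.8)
The plan is to build the example by a perturbation/gluing argument starting from two essentially one-dimensional profiles in the two coordinate directions, and to exploit precisely the direction-dependent geometry of the periodic cell. First I would look for a reaction term that decouples as much as possible in the two variables: roughly, $f(x,u) = g(u) + \eps\, h(x_1,x_2,u)$, where $g$ is a one-dimensional multistable nonlinearity on $[0,1]$ with an odd number of zeros, say five zeros $0 < \theta_1 < m < \theta_2 < 1$ with $g'<0$ at $0, m, 1$ (so the stable states are $0$, $m$, $1$), and $h$ is a periodic perturbation engineered so that in the $x_1$-direction the two fronts of the terrace (the one connecting $1$ to $m$ and the one connecting $m$ to $0$) have \emph{distinct} ordered speeds, while in the $x_2$-direction a single front connecting $1$ directly to $0$ becomes available. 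The diffusion matrix $A(x)$ can be kept diagonal, or one can even take $A \equiv I$ and put all the anisotropy into $h$; the key degree of freedom is that the effective one-dimensional problems obtained by projecting onto $e=(1,0)$ versus $e=(0,1)$ are genuinely different equations.

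The mechanism I would rely on for merging two fronts into one is the strict inequality in Assumption~\ref{ass:speeds} together with the gluing/terrace-construction machinery underlying Theorem~\ref{th:multi}: when the speed of the upper front and the speed of the lower front can be made to \emph{coincide}, the intermediate state $m$ disappears from the terrace and one obtains a single bistable-type front connecting $1$ to $0$. Concretely, for the direction $(0,1)$ I would arrange that the perturbation $h$ speeds up the lower front (the $m$-to-$0$ front) and slows down the upper front (the $1$-to-$m$ front) until, for a suitable value of $\eps$, one has $\overline{c}_m(e) = \underline{c}_m(e)$ in that direction — note $m$ is the only candidate intermediate state, and Assumption~\ref{ass:speeds} itself only forbids $\overline{c}_q > \underline{c}_q$ \emph{failing}, i.e. it requires strictness only at \emph{unstable} $q$, so at the stable state $m$ there is no such constraint and the terrace is free to collapse. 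In the direction $(1,0)$ I would instead keep the two speeds strictly ordered, so the terrace genuinely has two steps. Uniqueness of the terrace in each direction should follow from the one-dimensional-type monotonicity/steepness arguments: with the stable states $0, m, 1$ and no other periodic steady states in between (which can be guaranteed by taking $\eps$ small and $g$ nondegenerate, invoking the Dancer--Hess trichotomy as in the discussion after Theorem~\ref{thm:mono}), the only possible $q_j$'s are among $\{0,m,1\}$, and the ordering of speeds then pins down the terrace.

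The steps, in order, would be: (i) fix the unperturbed one-dimensional $g$ with stable states $0,m,1$ and verify Assumptions~\ref{ass:multi} and \ref{ass:speeds} for the homogeneous equation $\partial_t u = \Delta u + g(u)$, computing the (homogeneous) front speeds $c_{1,0}^{\rm hom}$ (for $1\to m$) and $c_{m\to 0}^{\rm hom}$ and checking they can be chosen e.g. with $c_{1\to m}^{\rm hom} < c_{m\to 0}^{\rm hom}$; (ii) introduce a periodic, anisotropic perturbation $\eps h$ (and/or anisotropic $A$) and, using continuity of the minimal/maximal monostable speeds $\overline{c}_q(e), \underline{c}_q(e)$ with respect to the coefficients — which follows from the monostable theory of \cite{W02} and standard stability of principal eigenvalues — track how the relevant speeds in the two directions move; (iii) choose the perturbation and its size so that in direction $(1,0)$ the two speeds stay strictly ordered and distinct (two-front terrace) while in direction $(0,1)$ they are forced to coincide (one-front terrace), then invoke Theorem~\ref{th:multi} to get existence of the terraces and a steepness/comparison argument for uniqueness; (iv) re-verify that the final equation still satisfies Assumptions~\ref{ass:multi} and \ref{ass:speeds} (the latter being automatic at the only unstable state, which is $\theta_1$ or $\theta_2$, since there $\overline{c}_q \ge 0 \ge \underline{c}_q$ and one can make the inequalities strict by keeping $\eps$ small and the zeros nondegenerate).

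The main obstacle I anticipate is step (iii): making the two front speeds in the $(0,1)$-direction \emph{exactly} equal. Matching a single real parameter ($\eps$, or one shape parameter of $h$) to achieve one scalar equation $\overline{c}_m((0,1)) = \underline{c}_m((0,1))$ is a codimension-one condition, so generically one expects it to be solvable by an intermediate value argument — provided one can show the difference $\overline{c}_m((0,1)) - \underline{c}_m((0,1))$ changes sign as the parameter varies, while the analogous difference in the $(1,0)$-direction stays positive. Establishing this sign change rigorously requires quantitative control on how anisotropic periodic perturbations shift monostable minimal speeds (e.g. via a homogenization-type expansion, or an explicit separable choice of $h$ that makes the $(0,1)$-projected equation have a provably different speed than the $(1,0)$-projected one), and this is where most of the real work lies; the rest is bookkeeping with the already-established terrace theorem and the trichotomy.
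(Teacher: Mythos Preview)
Your proposal has a genuine conceptual gap: the mechanism you describe for collapsing the terrace to a single front is incorrect. You claim that when the speed of the upper front ($1\to m$) and the speed of the lower front ($m\to 0$) are made to \emph{coincide}, ``the intermediate state $m$ disappears from the terrace and one obtains a single bistable-type front.'' But look at Definition~\ref{def:terrace}: the speeds of a terrace only need to satisfy $c_1\le c_2\le\cdots\le c_J$, with equality allowed. So if you succeed in matching the two speeds, the two-front configuration is still a perfectly valid propagating terrace, and nothing forces a single front connecting $1$ to $0$ to exist, let alone to be the unique terrace. Your codimension-one matching argument is therefore aimed at the wrong target.

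The correct mechanism, which the paper uses, is the opposite: one arranges that in the direction $(0,1)$ the ordering is \emph{violated}, i.e.\ the speed of the $1\to m$ front is \emph{strictly larger} than the speed of the $m\to 0$ front. Then the two-front configuration is excluded by the definition of a terrace, while Theorem~\ref{th:multi} still guarantees that \emph{some} terrace exists, so it must consist of a single front connecting $\bar p$ directly to $0$. Conversely, in the direction $(1,0)$ one keeps the ordering $c_{\mathrm{upper}}<c_{\mathrm{lower}}$, and a separate argument (comparison/sliding, not ``steepness'') rules out the existence of any single front connecting $\bar p$ to $0$. The paper achieves this not by a small perturbation of a homogeneous problem but by stacking a \emph{homogeneous} bistable layer (with a fixed, direction-independent front speed $c$) on top of a genuinely \emph{heterogeneous} bistable layer whose front speed $c(e)$ is shown to satisfy $c(e_1)>c>c(e_2)>0$; establishing this last inequality (Proposition~\ref{pro:speeds}) is where most of the work lies, and it requires an explicit, large-amplitude construction rather than a perturbative one.
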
 
Uniqueness here is understood up to shifts in time of the fronts.
It will be especially interesting to study how this non-symmetric
phenomenon affects the large-time dynamics of solutions of the Cauchy~problem.

\paragraph{Plan of the paper}
We start in the next section with a sketch of our argument in the homogeneous case, to explain the main ingredients of our method. This relies on a time discretization, 
in the spirit of Weinberger \cite{W02}, 
and on the study of an associated notion of a discrete travelling front. 
For the sake of completeness, some of the arguments of~\cite{W02} will be reclaimed
along the proofs. We also point out here that the resulting discrete problem shares similarities 
with the abstract bistable framework considered in~\cite{FZ}, 
though we shall use a different method to tackle bistable and multistable equations 
without distinction.

The proof of the general case is carried out in several steps:
\begin{enumerate}
\item Introduction of the iterative scheme (Sections \ref{sec:discrete}, \ref{sec:ac1}).
\item Definition of the speed of the front (Section \ref{sec:c^*}).
\item Capturing the iteration at the good moment and position (Section \ref{sec:subsequence}).
\item Derivation of the travelling front properties (Section \ref{sec:uppermost}).
\end{enumerate}
At this stage we shall have constructed a {\em discrete} pulsating travelling front 
connecting~$\bar p$ to some stable periodic steady state~$0\leq p<\bar p$. 
In the bistable case, one necessarily has that $p\equiv0$ and then it only remains to prove that 
the front is actually a continuous front. 
For the multistable case, we shall iterate our construction getting a family 
of travelling fronts. In order to conclude that this is a propagating terrace, we need to show that 
their speeds are ordered; this is the only point which requires 
the linear stability in Assumption \ref{ass:multi}. 
Summing up, the method proceeds as~follows:
\begin{enumerate}
\setcounter{enumi}{4}
\item Construction of the (discrete) pulsating terrace (Section \ref{sec:terrace}).
\item Passing to the continuous limit (Section \ref{sec:continuous}).
\end{enumerate}
Finally, Section~\ref{sec:asymmetric} is dedicated to the proof of 
Proposition~\ref{prop:asymmetric},
which provides an example where the shape of the propagating terrace strongly
depends on its direction. To achieve this, we shall exhibit a bistable equation
for which pulsating fronts have different speeds, depending on their direction, see
Proposition \ref{pro:speeds} below.

\section{The 1-D homogeneous case}\label{sec:1D}

In order to illustrate our approach, let us consider the simpler (and, as far as travelling fronts are concerned, already well-understood~\cite{AW}) bistable homogeneous equation
\Fi{ref_frame}
\partial_t u = \partial_{xx} u + f(u),\quad t\in\R,\ x\in\R,
\Ff
with $f \in C^1 ([0,1])$  satisfying
$$f(0)=f(1)=0,\qquad f<0\ \text{ in }(0,\theta),\qquad f>0\ \text{ in }(\theta,1).$$
In this framework, pulsating fronts simply reduce to planar fronts, i.e., entire solutions
of the form $U(x-ct)$.

The hypotheses on $f$ guarantee that Assumption \ref{ass:bi} is fulfilled with $\bar p\equiv1$.
They also entail the ``counter-propagation'' property, Assumption \ref{ass:speeds},
because in the homogeneous monostable case travelling fronts have positive speeds, see \cite{AW}. 
Namely, fronts connecting $1$ to $\theta$ exist for speeds $c$ larger than some $\overline c>0$,
whereas fronts connecting~$\theta$ to~$0$ exist for speeds $c$ smaller than some $\underline c<0$
(the latter property  is derived from~\cite{AW} by considering fronts moving leftward
for the equation for $\theta-u$).

The equation in the frame moving rightward with speed $c\in\R$ reads
\Fi{moving}
\partial_t u = \partial_{xx} u + c \partial_x u+f(u),\quad t\in\R,\ x\in\R.
\Ff


\subsection{The dynamical system}

We start by placing ourselves in a more abstract framework which we shall use to define a candidate front speed $c^*$, in the same way as in~\cite{W02}. We shall then turn to the construction of a travelling front connecting 1 to 0. We point out that in~\cite{W02}, such a travelling front was only shown to exist in the monostable case, and that a different argument is needed to deal with bistable or more complicated situations.
		
For any given $c\in\R$, we call $\mathcal{F}_c$
the evolution operator after time 1 associated with~\eqref{moving}.
Namely, $\mathcal{F}_c [ \phi ](x):=v(1,x)$, where 
$v$ is the solution of \eqref{moving} emerging from
the initial datum $v(0,x)=\phi(x)$. It follows from the
parabolic strong maximum principle that the operator~$\mathcal{F}_c$ is increasing.

Let us already point out that the profile $U$ of a usual travelling front
$U(x-ct)$ for~\eqref{ref_frame} 
is a stationary solution of \eqref{moving} and thus
a fixed point for the operator $\mathcal{F}_c$. As a matter of fact, in the homogeneous case the converse is also true (this follows for instance from a uniqueness result 
for almost planar fronts derived in \cite{BH12}). 
Therefore, our goal in this section will be to construct such a fixed point. \\

Consider a function $\phi\in W^{1,\infty}(\R)$ satisfying
\Fi{phi}
\phi \ \text{ is nonincreasing},\qquad
\phi(-\infty)\in(\theta,1),\qquad
\phi= 0\ \text{ in }\ [0,+\infty).
\Ff
We then define a sequence $(a_{c,n})_{n\in\N}$ through the following iterative procedure:
$$a_{c,0}:= \phi,$$
$$a_{c,n+1} := \max \{ \phi, \mathcal{F}_c [a_{c,n}]\},$$
where the maximum is taken at each $x\in\R$.

It follows from the monotonicity of $\phi$ and $\mathcal{F}_c$ (the latter being strict) that
$a_{c,n}(x)$ is nondecreasing with respect to $n$ and 
nonincreasing with respect to $x$,
and that it satisfies $0<a_{c,n}<1$.
Then, observing that 
\Fi{E0}
\mathcal{F}_c [V]=\mathcal{F}_0 [V] (\cdot+c),
\Ff
for any function $V$, we deduce that $a_{c,n}$ is nonincreasing with respect to $c$.
One also checks by iteration that $a_{c,n}(+\infty)=0$, thanks to standard parabolic arguments.	
All these properties are summarized in the following.
\begin{lem}\label{lem:acn}
	The sequence $(a_{c,n})_{n\in\N}$ is nondecreasing and satisfies $0<a_{c,n}<1$ 
	and $a_{c,n}(+\infty)=0$ for all $n\geq1$. 
Moreover, $a_{c,n}(x)$ is nonincreasing with respect to both $c$ and $x$, 
the latter monotonicity being strict in the set where $a_{c,n}>\phi$.
\end{lem}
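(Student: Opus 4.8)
The plan is to verify each asserted property of the sequence $(a_{c,n})_{n\in\N}$ by induction on $n$, exploiting the monotonicity of the evolution operator $\mathcal{F}_c$ (which holds by the parabolic strong maximum principle) together with the specific structure of $\phi$ in \eqref{phi}.

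First I would establish the two-sided bound $0<a_{c,n}<1$ for $n\ge 1$. Since $0$ and $1$ are steady states of \eqref{moving}, they are fixed points of $\mathcal{F}_c$; as $0\le\phi\le\phi(-\infty)<1$ and $\phi\not\equiv 0,1$, the strong maximum principle gives $0<\mathcal{F}_c[\phi]<1$ pointwise, and since $0<\phi<1$ fails only at the endpoints where $\phi=0$ but $\mathcal{F}_c[\phi]>0$, the pointwise maximum $a_{c,1}=\max\{\phi,\mathcal{F}_c[\phi]\}$ satisfies $0<a_{c,1}<1$. The inductive step is identical: if $0<a_{c,n}<1$ then $0<\mathcal{F}_c[a_{c,n}]<1$, hence $0<a_{c,n+1}<1$. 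Next, monotonicity in $n$: we have $a_{c,1}=\max\{\phi,\mathcal{F}_c[\phi]\}\ge\phi=a_{c,0}$, and if $a_{c,n}\ge a_{c,n-1}$ then applying the increasing operator $\mathcal{F}_c$ and taking the max with $\phi$ preserves the inequality, so $a_{c,n+1}\ge a_{c,n}$. For monotonicity in $x$: $\phi$ is nonincreasing by \eqref{phi}, and since the equation \eqref{moving} is translation-invariant in $x$, $\mathcal{F}_c$ maps nonincreasing functions to nonincreasing functions (compare $v(t,x)$ with $v(t,x+h)$ for $h>0$); the pointwise maximum of two nonincreasing functions is nonincreasing, so by induction each $a_{c,n}$ is nonincreasing in $x$. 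The strictness in the set $\{a_{c,n}>\phi\}$ follows because there $a_{c,n}=\mathcal{F}_c[a_{c,n-1}]$ is the value at time $1$ of a solution of a parabolic equation with a nonincreasing, nonconstant initial datum, and the strong maximum principle forces strict monotonicity of such a solution at positive times.

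For monotonicity in $c$, I would use the identity \eqref{E0}, namely $\mathcal{F}_c[V]=\mathcal{F}_0[V](\cdot+c)$. Given $c\le c'$, assume inductively $a_{c,n}\ge a_{c',n}$ (true for $n=0$ since $a_{c,0}=a_{c',0}=\phi$). Then, using that $\mathcal{F}_0$ is increasing and that $a_{c,n}$, $a_{c',n}$ are nonincreasing in $x$, we get $\mathcal{F}_c[a_{c,n}](x)=\mathcal{F}_0[a_{c,n}](x+c)\ge\mathcal{F}_0[a_{c',n}](x+c)\ge\mathcal{F}_0[a_{c',n}](x+c')=\mathcal{F}_{c'}[a_{c',n}](x)$, where the second inequality uses $c\le c'$ and the monotonicity in $x$ of $\mathcal{F}_0[a_{c',n}]$. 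Taking the pointwise max with $\phi$ yields $a_{c,n+1}\ge a_{c',n+1}$, closing the induction. Finally, for $a_{c,n}(+\infty)=0$: since $\phi\equiv 0$ on $[0,+\infty)$ and each $a_{c,n}$ is nonincreasing and bounded, $a_{c,n}(+\infty)=:\ell_n\ge 0$ exists; one shows by a standard parabolic argument—using interior estimates to pass to the limit $x\to+\infty$ in the equation for the solution defining $\mathcal{F}_c[a_{c,n-1}]$, or comparing with a spatially homogeneous supersolution that decays—that $\mathcal{F}_c[a_{c,n-1}](+\infty)$ is controlled by $a_{c,n-1}(+\infty)$, and since $\phi(+\infty)=0$ the induction gives $\ell_n=0$ for all $n\ge 1$.

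The main obstacle, modest as it is, lies in the limit $a_{c,n}(+\infty)=0$: unlike the other properties, which follow purely from order-preservation, this requires a genuine (if standard) parabolic compactness/estimate argument to justify interchanging the limit $x\to+\infty$ with the time-$1$ evolution. Concretely, since $f$ is Lipschitz with $f(0)=0$, one has $|f(u)|\le L u$ near $0$, so the solution $v$ with initial datum $a_{c,n-1}$ is dominated by the solution of the linear equation $\partial_t w=\partial_{xx}w+c\partial_x w+Lw$ with the same initial datum; writing this solution explicitly via the heat kernel and using that $a_{c,n-1}(x)\to 0$ as $x\to+\infty$, one concludes $\limsup_{x\to+\infty}v(1,x)=0$, hence $a_{c,n}(+\infty)=0$. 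Everything else is a routine induction built on the increasing property of $\mathcal{F}_c$ and the strong maximum principle.
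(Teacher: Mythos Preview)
Your proof is correct and follows essentially the same approach as the paper, which only sketches the argument in the text preceding the lemma: induction on $n$ using the order-preserving property of $\mathcal{F}_c$, the shift identity \eqref{E0} for the $c$-monotonicity, and a standard parabolic comparison for the limit at $+\infty$. Your version simply fills in the details the paper leaves implicit; in particular your linear comparison argument for $a_{c,n}(+\infty)=0$ is the same in spirit as the exponential supersolution the paper uses later in the general periodic setting (Lemma~\ref{lem:a1}).
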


Lemma \ref{lem:acn} implies that $(a_{c,n})_{n\in\N}$ converges pointwise to some 
nonincreasing function $\phi\leq a_c\leq 1$. 
The convergence actually holds locally uniformly in~$\R$, because the~$a_{c,n}$ are 
equi-uniformly Lipschitz-continuous, due to parabolic estimates. 
We also know that the $a_c$ are nonincreasing with respect to $c$.

We then introduce
$$c^* := \sup \{ c \in \R \, : \ a_c \equiv 1 \}.$$
One may check that $c^*$ is indeed a well-defined real number. 
Without going into the details (this a particular case of either Section~\ref{sec:N} or~\cite{W02}), we simply point out that this can be proved using some super- and subsolutions which exist
thanks to the Lipschitz continuity of $f$
as well as to the choice of $\phi (-\infty)$ in the basin of attraction of $1$.

We further see that the definition of $c^*$ does not depend on the particular choice of the 
initialising function $\phi$. Indeed, if $\tilde\phi$ satisfying \eqref{phi} is the initialisation of another sequence,
then for $c<c^*$ there holds that $a_{c,n}>\tilde\phi$ for $n$ sufficiently large.
From this and the monotonicity of $\mathcal{F}_c$
one deduces by iteration that the value of $c^*$ obtained starting from 
$\phi$ is larger than or equal to the one provided by $\tilde\phi$.
Equality follows by exchanging the roles of $\phi$ and $\tilde\phi$.

We shall also use the fact that 
\begin{equation}\label{c*not1}
a_{c^*} \not \equiv 1.
\end{equation}
This comes from the openness of the set $\{c \in \R  : a_c \equiv 1 \}$, which is established in either 
Section~\ref{sec:N} or~\cite{W02} in the more general periodic case. 
Let us briefly sketch a more direct proof. Let $c\in\R$ be such that $a_{c} \equiv 1$. 
We can find $\bar n$ such that $a_{c,\bar n}(1)>\phi(-\infty)$.
Arguing by induction and exploiting \eqref{E0}, one sees that
$$\forall\delta>0,\ n\in\N,\  x\in\R,\quad
a_{c+\delta,n}(x)\geq a_{c,n}(x+n\delta).$$
Thus, $a_{c+\frac1{\bar n},\bar n}(0)>\phi(-\infty)$ which implies that
$a_{c+\frac1{\bar n},\bar n}>\phi$
because $a_{c+\frac1{\bar n},\bar n}$ and $\phi$ are
nonincreasing and $\phi$ is supported in $(-\infty,0]$.
Using the next result we eventually deduce that 
$a_{c''} \equiv 1$ for all $c''$ in some neighborhood of $c$, and thus $c^* > c$.
\begin{lem}\label{lem:c*open}
	Let $c'\in\R$ and $\bar n\in\N$ be such that $a_{c',\bar n}>\phi$. 
	Then $a_{c''}\equiv1$ for all~$c''<c'$.
\end{lem}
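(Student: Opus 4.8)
**Proof plan for Lemma \ref{lem:c*open}.**

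The plan is to show that the hypothesis $a_{c',\bar n}>\phi$ propagates: once the $\bar n$-th iterate strictly dominates $\phi$ everywhere, the whole sequence keeps growing and must saturate at $1$, and moreover this remains true when we decrease the speed. First I would record the monotonicity in $c$ from Lemma \ref{lem:acn}: since $a_{c,n}$ is nonincreasing in $c$, for any $c''<c'$ we have $a_{c'',\bar n}\geq a_{c',\bar n}>\phi$. Thus it suffices to prove the statement for $c''=c'$ itself, i.e.\ to show that $a_{c',\bar n}>\phi$ implies $a_{c'}\equiv1$; the conclusion for all smaller speeds then follows automatically.

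Next I would exploit the strict inequality $a_{c',\bar n}>\phi$ together with the structure of the iteration. Because $a_{c',\bar n+1}=\max\{\phi,\mathcal F_{c'}[a_{c',\bar n}]\}$ and $a_{c',\bar n}>\phi$ with both functions nonincreasing, $\phi$ supported in $(-\infty,0]$ and $a_{c',\bar n}(+\infty)=0$, one sees that on the region where $\phi=0$ the iterate equals $\mathcal F_{c'}[a_{c',\bar n}]>0$, while near $-\infty$ we stay above $\phi(-\infty)$. The key point is that, since $\phi(-\infty)$ lies in the basin of attraction of $1$ for the homogeneous equation (and the dynamics in the moving frame is just a spatial shift of the stationary one, cf.\ \eqref{E0}), the constant $\phi(-\infty)$, viewed as a subsolution pushed forward, converges to $1$. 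Concretely, one constructs a nonincreasing subsolution $\psi$ with $\psi\leq a_{c',\bar n}$, $\psi(-\infty)=\phi(-\infty)$ and $\psi$ not identically below $\phi$, whose forward orbit under $\mathcal F_{c'}$ increases to $1$ locally uniformly; this uses the Lipschitz (indeed $C^1$) regularity of $f$ to build the subsolution from the principal eigenfunction / a phase-plane argument, exactly as alluded to in the discussion of why $c^*$ is well defined. Comparison then gives $a_{c',n}\geq\mathcal F_{c'}^{\,n-\bar n}[\psi]\to1$, hence $a_{c'}\equiv1$.

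I would then assemble the pieces: monotonicity in $c$ reduces to the case $c''=c'$; the strict domination $a_{c',\bar n}>\phi$ lets us insert a subsolution that is genuinely above $\phi$ somewhere in the bulk and whose asymptotic value sits in the basin of attraction of $1$; monotonicity of $\mathcal F_{c'}$ and of the sequence $(a_{c',n})_n$ propagate this to all later iterates; and the limit $a_{c'}$, being $\geq1$ and $\leq1$, equals $1$. Finally, feeding $c''<c'$ through the first step, $a_{c''}\equiv1$ for all $c''<c'$, as claimed.

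The main obstacle is the middle step: producing the subsolution $\psi$ and verifying that its forward orbit under $\mathcal F_{c'}$ climbs to $1$. The delicate points are that the comparison must be done in the moving frame (so one should either work with \eqref{E0} to reduce to $c'=0$, or carry the drift term along), that $\psi$ must be compactly supported—or at least decaying—on the right so as to sit below $a_{c',\bar n}$, which itself vanishes at $+\infty$, and that one must quantify how the strict gap $a_{c',\bar n}>\phi$ at a single bulk point spreads by the strong maximum principle before the subsolution argument can be triggered. In the periodic/multidimensional setting of Section \ref{sec:N} this is exactly the kind of estimate carried out there, so here I would either cite that construction or give the short homogeneous version using an explicit compactly supported subsolution of the form $\eps\varphi$ with $\varphi$ a suitable bump, relying on $f'(0)<0$ being \emph{not} needed—only $f>0$ on $(\theta,1)$ and $\phi(-\infty)>\theta$.
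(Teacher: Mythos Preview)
Your reduction to the case $c''=c'$ contains a genuine gap. You are attempting to prove the stronger statement that $a_{c',\bar n}>\phi$ forces $a_{c'}\equiv1$, and your mechanism for this is a subsolution $\psi\leq a_{c',\bar n}$ with $\psi(-\infty)=\phi(-\infty)$ whose orbit under $\mathcal F_{c'}$ climbs to $1$. But by \eqref{E0} one has $(\mathcal F_{c'})^m[\psi](x)=v(m,x+mc';\psi)$, so the orbit reaches $1$ at a fixed point $x$ only if the solution $v$ in the stationary frame spreads faster than $c'$. For step-type bistable data this spreading speed is precisely $c^*$; hence your argument tacitly assumes $c'<c^*$, which is what the lemma is meant to help establish. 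The constructions in Section~\ref{sec:N} you invoke (proof of Lemma~\ref{lem:ac1}) only give $a_c\equiv\bar p$ for $-c$ large enough, not for arbitrary $c$, so they do not close this gap either. Note also that the conclusion $a_{c'}\equiv1$ is not asserted by the lemma and is in fact delicate: the whole point of \eqref{c*not1} is that $a_{c^*}\not\equiv1$.

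The paper's proof avoids this by \emph{not} trying to show $a_{c'}\equiv1$. It uses the strict gap $c''<c'$ directly: from $a_{c'',\bar n+m}=(\mathcal F_{c''})^m[a_{c'',\bar n}]$ and \eqref{E0} one gets
\[
a_{c'',\bar n+m}(x)\;\geq\;(\mathcal F_{c'})^m[a_{c',\bar n}]\big(x-(c'-c'')m\big)\;=\;a_{c',\bar n+m}\big(x-(c'-c'')m\big),
\]
and since $(c'-c'')m\to+\infty$, the right-hand side tends to $a_{c',n}(-\infty)$, which is the ODE orbit from $\phi(-\infty)>\theta$ and hence converges to $1$. The drift coming from the speed difference replaces any spreading argument and requires no information on the position of $c'$ relative to $c^*$.
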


\begin{proof}
	The monotonicities provided by Lemma~\ref{lem:acn}
	yield $a_{c'',\bar n+m} > \phi$ for all $c''\leq c'$ and $m\in\N$, which,
	recalling the definition of the sequences 
	$(a_{c,n})_{n\in\N}$, implies in turn that
	$a_{c'',\bar n+m}=(\mathcal{F}_{c''})^m [a_{c'' ,\bar n}]$.
	Then, taking $c''<c'$ and exploiting \eqref{E0}, we get
	\[\begin{split}
	\forall m\in\N,\ x\in\R,\quad
	a_{c'',\bar n+m}(x) & = (\mathcal{F}_{c''})^m [a_{c'', \bar n}] (x) \\
	&=(\mathcal{F}_{c'})^m [a_{c'' ,\bar n}] (x- (c'-c'') m)\\
	&\geq
	(\mathcal{F}_{c'})^m [a_{c',\bar n}] (x - (c'-c'') m)\\
	&=a_{c',\bar n+m}(x- (c'-c'') m).
	\end{split}\]
Passing to the limit as $m \to +\infty$ (and using again the monotonicity of the sequence) 
we find that $a_{c''}(x)\geq a_{c',n}(-\infty)$
for all $x\in\R$ and $n\in\N$.
Observe that $(a_{c',n}(-\infty))_{n\in\N}$ is the solution of the ODE $U'=f(U)$ computed on the integers and starting from
$\phi(-\infty)>\theta$, whence it converges to $1$.
This shows that $a_{c''} \equiv 1$.
\end{proof}


\subsection{Capturing the sequence at the good moment and position}

From here we diverge from
Weinberger's scheme which, as we mentioned above,
 does provide a front in the monostable case but not
in the bistable one. 

Consider $c < c^*$. Because $a_c \equiv 1$,
we have seen before that we can find $n(c)$ such that
$a_{c,n(c)+m} > \phi$ for $m\in\N$. This means that, starting from $n(c)$,
the sequence $(a_{c,n})_{n\in\N}$ is simply given by iterations of $\mathcal{F}_c$, namely,
\Fi{nc1}
\forall m\in\N,\quad
a_{c,n(c)+m}=(\mathcal{F}_c)^m [a_{c,n(c)}].
\Ff
Fix $\theta'\in(\theta,\phi(-\infty))$ and, for $n\geq n(c)$, define the point 
$x(c,n)$ through the relation
$$a_{c, n} (x (c,n)) =\theta'.$$
Note that $x(c,n)$ exists because $ a_{c,n}(-\infty)\geq\phi(-\infty)>\theta'$ 
and $a_{c,n}(+\infty)=0$ by Lemma~\ref{lem:acn}.
Moreover we claim that, by construction of $c^*$, there holds that
\begin{equation}\label{claim1}
\limsup_{n \to \infty} \frac{x (c,n)}{n} \leq c^* - c.
\end{equation}
Let us postpone the proof of this for a moment and continue with our construction.
By \eqref{claim1}, one readily sees that, up to increasing $n (c)$ if need be,
the following holds:
\Fi{nc2}
\forall 0\leq m\leq 1/\sqrt{c^*-c},\quad
x (c,  n (c)+m  ) - x (c,   n(c)) \leq 2\sqrt{c^* -c}.
\Ff 
Conditions \eqref{nc1},\eqref{nc2} determine our choice of the diagonal sequence
$(a_{c,n(c)})_{c<c^*}$.

Let $u_c (t,x)$ denote the solution of the Cauchy problem for
\eqref{moving} with initial datum~$a_{c,n(c)}$ (notice that $u_c (t,x)$ satisfies parabolic estimates up to time $t=0$ because $a_{c,n(c)}=\mathcal{F}_c[a_{c,n(c)-1}]$). Property \eqref{nc1} and the monotonicity of $(a_{c,n})_{n\in\N}$ imply that
$$\forall n\in\N,\quad
u_c(n+1,\cdot)\equiv a_{c,n(c)+n+1}\geq a_{c,n(c)+n}\equiv u_c(n,\cdot),$$
that is, the sequence $(u_c(n,\cdot))_{n\in\N}$ is nondecreasing. 
Furthermore, the function $u_c$ inherits the monotonicity
in $x$ of the initial datum, which is strict by Lemma~\ref{lem:acn}
because $ a_{c,n}>\phi$.
 
We finally consider the translation $u_c(t,x+x(c,  n(c)))$ of $u_c$. 
By parabolic estimates up to $t=0$, we have that (up to subsequences) 
$$u_c (t  ,x + x(c, n(c))) \to a^* (t,x)\quad\text{ as } c \nearrow c^*,$$
locally uniformly in $(t,x)\in [0,+\infty) \times\R$, where~$a^*(t,x)$ satisfies the equation \eqref{moving} with $c=c^*$. 
We further know that 
$$a^*(0,0)=\theta'$$ 
and that $a^*(n,x)$ is nondecreasing in~$n\in\N$ and 
nonincreasing in~$x\in\R$.

Let us now prove~\eqref{claim1}. First, the function $\phi $
being nonincreasing, for any $c < c^*$ we deduce from \eqref{E0} that 
$$a_{c,1} = \max \{ \phi, \mc{F}_c [\phi] \} \leq 
\max \{ \phi (\cdot + (c-c^*)), \mc{F}_{c^*} [\phi] (\cdot + (c-c^*)) \} = a_{c^*,1} (\cdot + (c-c^*)).$$
An iterative argument then shows that
\begin{equation}\label{claim22}
\forall n\in\N,\quad a_{c,n} \leq a_{c^*,n} (\cdot + n (c-c^*)).
\end{equation}
Now it follows from \eqref{c*not1} that $\inf a_{c^*} \leq \theta $. Indeed, assume by contradiction that $\inf a_{c^*} > \theta$. Then by comparison with the ODE, we immediately conclude that $(\mathcal{F}_c)^m [a_{c^*}] \to 1$ as $m \to +\infty$. However, by construction, $a_{c^*,n+1} \geq \mathcal{F}_{c^*} [a_{c^*,n}]$, hence $a_{c^*} \geq \mathcal{F}_{c^*} [a_{c^*}]$ and therefore
the monotonicity of $\mc{F}_{c^*}$ eventually yields 
$$a_{c^*}\geq\lim_{m\to+\infty}(\mathcal{F}_c)^m [a_{c^*}]=1,$$
contradicting \eqref{c*not1}.
We infer from the above that there exists $X_{\theta'} \in\R$ such that
$$\forall  n \in \mathbb{N},\quad
 \theta'>a_{c^*} (X_{\theta'})\geq a_{c^*,n} (X_{\theta'})\geq a_{c,n} (X_{\theta'}+n (c^* -c)),$$
where the last inequality follows from~\eqref{claim22}. This means that
$$\forall  n \in \mathbb{N},\quad
x (c,n)<X_{\theta'} + n (c^* -c),$$
from which \eqref{claim1} immediately follows.


\subsection{The function $a^*$ converges to the profile of a front}

We recall that, by construction, the sequence $a^* (n, \cdot)$ is nondecreasing with respect to $n \in \mathbb{N}$. In particular, we can define
$$U^* (t,x) := \lim_{n \to +\infty} a^* (t+n,x),$$
By parabolic estimates, the above limit exists (up to subsequences)
locally uniformly in $(t,x)\in\R^2$ and
$U^*$ is a periodic in time solution of \eqref{moving} with $c = c^*$.
Moreover, $U^*$ satisfies $U^*(0,0)\geq\theta'$
and inherits from $a^*$ that it is nonincreasing with respect to $x$. Let us check that it is actually a travelling front.

Using parabolic estimates and the monotonicity with respect to $x$, we see that the sequences $(U^*(t,x\pm n))_{n\in\N}$ converge locally uniformly in $(t,x)\in \R^2$ (up to subsequences)
to two steady states $U^*_\pm$ of the same ODE \,$U'=f(U)$ (here we used that this
ODE does not admit non-trivial periodic solutions), i.e., $U^*_\pm$ are constantly equal 
to~$0$,~$\theta$ or $1$.
The fact that $U^*(0,0)\geq\theta'>\theta$ and the monotonicity in $x$ then imply that
$$U^*_- = U^* (\cdot, -\infty) \equiv 1.$$

Next, we claim that $U^*_- \equiv0$. 
Once this claim is proved, 
one may show by a sliding argument as in~\cite{BH12} that $U^*$ 
is actually independent of $t$, and thus 
it is the profile of a front moving with speed $c^*$. Therefore, in order to conclude this preliminary section, we need to rule out the cases $U^*_+\equiv  \theta$ and 
$U^*_+\equiv  1$.
Condition \eqref{nc2}
is specifically devised to prevent the latter possibility. Indeed, it 
yields
$$\forall 0\leq m\leq 1/\sqrt{c^*-c},\quad
u_c (m, x (c,  n(c)) + 2\sqrt{c^* -c} ) \leq 
u_c (m, x (c,  n(c)+m))=\theta'.$$ 
Passing to the limit as $c \nearrow c^*$ in this inequality we get 
\begin{equation*}
\forall m \in \mathbb{N},\quad a^* (m, 0) \leq \theta',
\end{equation*}
whence $U^* (0,0)\leq\theta'$. By the monotonicity in $x$, we then derive
$$U^*_+ = U^* (\cdot, +\infty)<1.$$
It remains to rule out the case $U^*_+ \equiv\theta$.
To achieve this, we shall compare $c^*$ with the spreading 
speeds associated with the restrictions of $f$ to $[0,\theta]$ and $[\theta,1]$ respectively, 
which are of the well-known (even in the periodic and multidimensional case) monostable type.
This is where the ``counter-propagation'' property comes into play.
We recall that such a property is guaranteed in the homogeneous case we are considering now, 
but should be imposed in general through Assumption \ref{ass:speeds}.

We proceed by contradiction and suppose that $U^*_+ \equiv \theta$. 
Thus $U^* (0,\cdot) \geq \theta$, as well as $U^* (0,\cdot) \geq \overline{u}_0$ defined by
$$\overline{u}_0 = \theta' \, \1_{(-\infty,0]}  + \theta \,\1_{(0,+\infty)} .$$
Consider now the solution $\overline{u}$ of \eqref{ref_frame} with initial datum $\overline{u}_0$.  Since $\theta$ is an unstable steady state, we can use the well-known result about the spreading speed for solutions of the monostable equation from~\cite{AW}. 
Namely, we find a speed $\overline{c}>0$ such that 
$$\forall c < \overline{c}, \quad \overline{u} (t,ct) \to 1 \quad\text{as }\;t \to +\infty,$$
$$\forall c > \overline{c}, \quad \overline{u} (t,ct) \to \theta \quad\text{as }\;t \to +\infty.$$
It is also proved in~\cite{AW} that $\overline{c}$ coincides with the minimal speed of fronts,
c.f.~Theorem~\ref{thm:mono}, that is,
using the same notation as in the introduction, there holds that~$\overline{c} = \overline{c}_\theta$.
Since $U^* (t,x-c^*t)$ satisfies \eqref{ref_frame} and 
$U^* (0,\cdot)\geq \overline{u}_0$, we infer by comparison that for all~$c< \overline{c}$,
there holds
$U^* (t,(c-c^*)t) \to 1  $ as $t \to +\infty$. Recalling that $U^*$ is 
periodic in time and that we are assuming that $U^*_+ \equiv \theta$, 
we eventually find that 
$c^* \geq \overline{c} >0$.

Let us go back now to the construction of $a^*$, $U^*$. We have that, up to a subsequence,
$$U^* (0,x) = \lim_{k \to +\infty} \Big(\lim_{c  \nearrow c ^*} a_{c, n(c) +k} \big( x + x(c,n(c))\big)\Big).$$
In particular, one can take a sequence $c_k \nearrow c^*$ such that, locally uniformly in $x$, 
\begin{equation}\label{U*0}
U^* (0,x) = \lim_{k \to +\infty} a_{c_k,n(c_k)+k} (x + x(c_k, n(c_k))).
\end{equation}
Now for any $c<c^*$ and $n\in\N$, 
let $x ' (c,n)$ be such that
$$a_{c,n} ( x' (c,n)) = \frac{\theta}{2} .$$
Let us extract another subsequence so that the solution of \eqref{moving} with initial datum 
$$a_{c_k, n(c_k)+k} (x+x'(c_k, n(c_k)+k))$$
converges locally uniformly in $(t,x)\in\R^2$
to some $V^* (t,x)$,
 which is an entire solution of \eqref{moving} with $c=c^*$. Moreover, $V^* (n,x)$ is nondecreasing in $n \in \mathbb{Z}$, nonincreasing in $x \in \mathbb{R}$, and satisfies $V^* (0,0 ) =  \theta/2 .$
One can further see that $V^* (0,\cdot) \leq \theta$; this follows from the fact that
$x' (c_k, n(c_k)+k) - x (c_k, n(c_k)) \to +\infty$, which, in turn, is a consequence of~\eqref{U*0}
and of the contradictory assumption $U^*_+ \equiv \theta$.
In particular, we have that $V^* (0,\cdot) \leq \underline{u}_0$ defined by
$$\underline{u}_0 = \theta \, \1_{(-\infty,0]}  + \frac{\theta}{2}  \,\1_{(0,+\infty)} .$$
Owing again to the spreading result for the monostable equation, 
there exists a speed $\underline{c} <0$ such that
the solution $\underline{u}$ of \eqref{ref_frame} 
emerging from $\underline{u}_0$ satisfies
$$\forall c < \underline{c}, \quad \underline{u} (t,ct) \to \theta\quad\text{as }\;t \to +\infty,$$
$$\forall c > \underline{c}, \quad \underline{u} (t,ct) \to 0\quad\text{as }\;t \to +\infty.$$
On one hand,
by comparison we get that $V^* (t,x - c^* t) \leq \underline{u} (t,x)$. On the other hand, by monotonicity we know that $V^* (n,x) \geq \frac{\theta}{2}$ for all $n \in \mathbb{N}$,
$x\leq0$. One then easily infers that $c^* \leq \underline{c} <0$. We have finally reached a contradiction.

\section{The iterative scheme in the periodic, $N$-dimen\-sional case}\label{sec:N}

We now turn to the general periodic case in arbitrary dimension.
Because the equation is no longer invariant by any space translation, we need to introduce a more complicated operator involving also a somewhat artificial variable. This makes things more technical, though the overall strategy remains the same.

\subsection{A time discretization}\label{sec:discrete}

The main ingredient of our proofs is inspired by Weinberger \cite{W02}, and consists in looking for travelling fronts as fixed points of an appropriate family of mappings issued from a time discretization of \eqref{eq:parabolic}.

First, we use the notation
$$v(t,y;x\mapsto v_0(x))$$
to indicate the solution to \eqref{eq:parabolic} with initial datum $v_0$, evaluated at $(t,y)$. In the sequel, we shall often 
omit to write ``$x\mapsto$'' and we shall just use $x$ as the variable involved in the initial datum. 

Let us now recall (see Definition~\ref{def:puls_front}) that a \textit{pulsating travelling front} in a direction $e \in \mathbb{S}^{N-1}$ is a solution of \eqref{eq:parabolic} of the form
$$u(t,x) = U (x,x\cdot e - ct)$$
with $U (x,z)$ periodic in the $x$-variable and converging to two distinct steady states as~$z \to \pm \infty$. 
In particular, one may look at a travelling front as a family $(U(x,z))_{z\in\R}$, using the second variable as an index.

Let us translate the notion of pulsating travelling front to the discrete setting.
\begin{defi}\label{def:discrete_front}
A {\em discrete travelling front} in a direction $e \in \mathbb{S}^{N-1}$ with speed $c \in \mathbb{R}$ is a function $U(y,z)$ which is periodic in its first variable, 
satisfies
$$\forall(y,z)\in\R^{N+1},\quad
v(1,y; x \mapsto U(x,z + x \cdot e)) \equiv U(y,z + y \cdot e - c),$$
and connects two steady states $q_1$ and $q_2$, i.e.,
$$U(\cdot, - \infty) \equiv q_1 (\cdot) > U (\cdot, \cdot) > U (\cdot , +\infty) \equiv q_2 (\cdot),$$
where convergences are understood to be uniform.\end{defi}

Clearly, if $u(t,x) = U (x,x\cdot e - ct)$ is a (continuous) 
pulsating travelling front then~$U(x,z)$ is a discrete travelling front, at least if $c \neq 0$ so that the change of variables $(t,x) \mapsto (x, x\cdot e - ct)$ is invertible. The converse is a priori not obvious: we immediately deduce from 
Definition~\ref{def:discrete_front} that, for every $\tau \in \mathbb{R}$,  
the function $U(x,x\cdot e - ct)$ coincides with a solution $u_\tau$ of the parabolic 
equation~\eqref{eq:parabolic} on the 1-time-step set
$(\{\tau\} + \mathbb{Z} ) \times \mathbb{R}^N$, but to recover a pulsating front we
should have that the $u_\tau$ are time-translations of the same solution. 
This difficulty will be overcome by instead considering different discretizations 
with time steps converging to $0$.

\begin{rmk}
This part of the argument, about going from discrete to continuous travelling fronts, was actually omitted by Weinberger in 
the paper \cite{W02}
that we refer to in Theorem~\ref{thm:mono} above. 
A proof in the homogeneous case can be found in~\cite{LWL}. However this does not seem to raise significant difficulties in the periodic case.
Let us also mention 
that one can see that a discrete travelling front gives rise to an
``almost planar generalized transition front'' in the sense of Berestycki and Hamel \cite{BH12}.
Then, in some situations (typically under some strong stability assumptions and provided also that the front speed is not zero), it is shown 
in \cite[Theorem 1.14]{BH12} that an almost planar transition front is also a travelling front in a usual sense.
\end{rmk}

Definition~\ref{def:discrete_front} leads us to define the 
family of mappings $\mathcal{F}_{e,c}:
L^\infty(\R^{N+1})\to L^\infty(\R^{N+1})$
for $e\in \mathbb{S}^{N-1}$ and $c\in\R$ as follows:
\Fi{eq:def_mapping}
\mathcal{F}_{e,c}[V](y,z):=v(1,y;V (x , z+ x \cdot e  - y \cdot e + c)).
\Ff
Rewriting the mapping $\mathcal{F}_{e,c}$ as
\Fi{eq:def_mapping1}
\mathcal{F}_{e,c}[V](y,z+y\cdot e-c)=v(1,y;V (x , z + x \cdot e)),
\Ff
we see that the discrete travelling fronts are given by the fixed points of 
$\mathcal{F}_{e,c}$. Formula~\eqref{eq:def_mapping1} also allows one to use parabolic estimates to obtain regularity with respect to~$y\mapsto(y,z+y\cdot e)$.

In a similar fashion, notice that any spatially periodic stationary state $p(y)$ of \eqref{eq:parabolic} is a $z$-independent fixed point of $\mathcal{F}_{e,c}$ for any $c$ and $e$. The converse is also true, as a consequence of the next result.
\begin{prop}\label{prop:energy1}
Let $u(t,x)$ be a 1-periodic in time solution of \eqref{eq:parabolic} which is also periodic in space.

Then $u$ is actually stationary in time.
\end{prop}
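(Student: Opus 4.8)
The plan is to exploit the spatial periodicity together with the time periodicity, via a parabolic comparison (or energy) argument that forces $u$ to be monotone in time between consecutive integer times, hence constant. First I would observe that both $u(t,x)$ and the shifted function $u(t+1,x)$ solve \eqref{eq:parabolic}; by hypothesis they agree at every integer time, so in particular $u(1,\cdot)\equiv u(0,\cdot)$. Set $w(t,x):=u(t+1,x)-u(t,x)$, which is an entire solution of the linear parabolic equation obtained by writing $f(x,u(t+1,x))-f(x,u(t,x))=c(t,x)\,w(t,x)$ with $c$ bounded (using that $f$ is globally Lipschitz in $u$; under Assumption~\ref{ass:multi} one may even take $c(t,x)=\int_0^1 \partial_u f(x,\tau u(t+1,x)+(1-\tau)u(t,x))\,d\tau$). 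Since $w(0,\cdot)\equiv 0$ and $w$ is a bounded solution of a linear parabolic equation that is periodic in $x$, the strong maximum principle gives $w\equiv 0$ for all $t\ge 0$, i.e. $u(t+1,x)\equiv u(t,x)$ — so $u$ is $1$-periodic in time in the stronger sense that it is actually invariant under the time-$1$ shift, not just at integer times. Wait: this already would contradict nothing yet, it just upgrades "$1$-periodic" to "time-shift invariant by $1$". The real work is to go from here to full stationarity.

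So the second, genuine step is to show that a solution invariant under the time-$1$ shift, bounded and spatially periodic, must be time-independent. Here I would use an energy/Lyapunov argument available because the elliptic operator $\mathrm{div}(A(x)\nabla\cdot)+f(x,\cdot)$ has a variational structure: on the torus $\mathbb{T}^N=\R^N/\Z^N$, the functional $E[v]=\int_{\mathbb{T}^N}\big(\tfrac12 A(x)\nabla v\cdot\nabla v - F(x,v)\big)dx$, with $\partial_u F=f$, is a strict Lyapunov functional for \eqref{eq:parabolic} restricted to spatially periodic data: along any periodic-in-space solution one has $\frac{d}{dt}E[u(t,\cdot)] = -\int_{\mathbb{T}^N} (\partial_t u)^2\,dx \le 0$, with equality only at steady states. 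Since $u$ is bounded and spatially periodic, $E[u(t,\cdot)]$ is finite and nonincreasing in $t$; since $u(t+1,\cdot)\equiv u(t,\cdot)$, the quantity $E[u(t,\cdot)]$ is also $1$-periodic in $t$; a nonincreasing periodic function is constant, so $\frac{d}{dt}E[u(t,\cdot)]\equiv 0$, which forces $\partial_t u\equiv 0$, i.e. $u$ is stationary. This completes the proof.

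An alternative to the energy step, in case one wants to avoid assuming enough regularity to differentiate $E$ and wants to use only the comparison principle (which is all that is invoked elsewhere in this section), is the following. Fix $\eps>0$; by spatial periodicity and boundedness, $\sup_{x}|u(\eps,x)-u(0,x)|$ is attained (or approached) and, using that $u(\cdot+1,\cdot)\equiv u(\cdot,\cdot)$, the function $t\mapsto M(t):=\sup_x (u(t+\eps,x)-u(t,x))$ is $1$-periodic; but by the comparison principle applied to the two solutions $u(\cdot,\cdot)$ and $u(\cdot+\eps,\cdot)$ (whose difference solves a linear parabolic equation on the torus), $M(t)$ is nonincreasing in $t$ — hence constant, and then the strong maximum principle forces $u(t+\eps,x)\equiv u(t,x)$ for all $t$; letting $\eps\to0$ gives $\partial_t u\equiv0$. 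Either way, the monotonicity of a natural scalar quantity along the flow, combined with its forced periodicity, is the crux.

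I expect the main obstacle to be making the Lyapunov/monotonicity step fully rigorous with only the regularity at hand: the energy argument needs $u(t,\cdot)\in H^1(\mathbb{T}^N)$ and enough parabolic smoothing to justify $\frac{d}{dt}E = -\int(\partial_t u)^2$, while the comparison-based variant needs care about whether the sup is attained and about strict versus non-strict monotonicity (the strong maximum principle is what converts $M(t)\equiv\text{const}$ into pointwise equality). In the present setting both are fine: interior parabolic estimates give $u$ smooth for $t>$ any fixed time, and on the torus the relevant suprema are maxima, so the argument closes cleanly. The spatial periodicity is used essentially — it is what makes $E$ finite and what makes the maximum of the difference attained — which is why the statement is phrased for solutions periodic in space.
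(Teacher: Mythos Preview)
Your second paragraph --- the energy/Lyapunov argument on the torus, using that $t\mapsto E[u(t,\cdot)]$ is nonincreasing and $1$-periodic, hence constant, forcing $\partial_t u\equiv 0$ --- is exactly the paper's proof.

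Two side remarks. First, your opening paragraph is unnecessary: ``$1$-periodic in time'' already means $u(t+1,\cdot)\equiv u(t,\cdot)$ for every $t$, so there is nothing to upgrade. Second, your proposed comparison-principle alternative has a genuine gap: the quantity $M(t)=\max_x\big(u(t+\eps,x)-u(t,x)\big)$ is \emph{not} in general nonincreasing along solutions of a linear parabolic equation $\partial_t w=\mathrm{div}(A\nabla w)+c(t,x)w$ when the zeroth-order coefficient $c$ may be positive; at a spatial maximum one only gets $\partial_t w\le c\,w$, which permits growth. The comparison principle preserves ordering between two solutions, but it does not make the supremum of their difference monotone. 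So the energy route is the one that actually closes, and it is what the paper does.
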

\begin{proof}
Let us first introduce the energy 
$$E (w): = \int_{[0,1]^N} \left(\frac{A |\nabla w |^2}{2} - F (x,w) \right)dx,$$
for any periodic function $w\in C^1(\R^N)$, where
$$F (x,s) := \int_0^s f(x,\sigma) d\sigma.$$
Then one may check that the solution $u(t,x)$ of \eqref{eq:parabolic} satisfies
$$\partial_t E (u (t,\cdot)) =  - \int_{[0,1]^N} | \partial_t u |^2dx\leq0 .$$
On the other hand, the mapping $t\mapsto E (u (t,\cdot))$ is $1$-periodic, whence 
it is necessarily constant. This implies that 
$\partial_t u \equiv 0$.
\end{proof}

We also derive several properties of the mapping 
$\mathcal{F}_{e,c}$ which will be useful later.
\begin{prop}\label{prop:Fec}
For given $e \in \mathbb{S}^{N-1}$ and $c \in \mathbb{R}$, the mapping $\mathcal{F}_{e,c}$ satisfies the following properties.
\begin{enumerate}[$(i)$]
\item {\em Periodicity:} if $V (y,z)$ is periodic with respect to $y \in \R^N$ then
this holds true for
$\mathcal{F}_{e,c} [V] (y,z)$.

\item {\em Monotonicity:} 
if $V_1 \leq V_2$ then $$\mathcal{F}_{e,c} [V_1] \leq \mathcal{F}_{e,c} [V_2];$$
if in addition $\sup_{y\in\R^N}(V_2-V_1)(y,z+y\cdot e)>0$ for all $z\in\R$, then 
$$\mathcal{F}_{e,c} [V_1] < \mathcal{F}_{e,c} [V_2].$$

\item {\em Continuity:}  if $V_n (y,z+ y \cdot e)\to V_\infty (y,z+ y \cdot e)$
as $n\to+\infty$ locally uniformly in~$y\in\R^N$, for some $z\in\R$, then
$$\F{c}[V_n] (y,z+ y \cdot e-c)\to \F{c}[V_\infty] (y,z+ y \cdot e-c)
\quad\text{as }\;n\to+\infty$$
locally uniformly in $y\in\R^N$.
	
\item {\em Compactness:} for any sequence $(V_n)_{n \in\mathbb{N}}$ 
bounded in $L^\infty(\R^{N+1})$ and any $z \in \mathbb{R}$, there exists a subsequence (depending on $z$) along which the function $y \mapsto \mathcal{F}_{e,c} [V_n] (y,z+y\cdot e)$ converges in $L^\infty_{loc}(\R^N)$ as $n\to+\infty$.
\end{enumerate}
\end{prop}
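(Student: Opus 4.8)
The plan is to prove each of the four properties of $\mathcal{F}_{e,c}$ in turn, all of them being essentially direct consequences of standard parabolic theory applied via the reformulation \eqref{eq:def_mapping1}, which tells us that $y\mapsto\mathcal{F}_{e,c}[V](y,z+y\cdot e-c)$ is the time-$1$ evaluation of the solution of \eqref{eq:parabolic} started from the datum $x\mapsto V(x,z+x\cdot e)$.

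\medskip
\textbf{Periodicity.} Fix $z$ and suppose $V(\cdot,z)$ is $1$-periodic in $y$. For $L\in\Z^N$, the function $x\mapsto V(x+L,z+(x+L)\cdot e)$ equals $x\mapsto V(x,z+x\cdot e)$ shifted by $L$ up to the shift $z\mapsto z+L\cdot e$ in the index; more precisely, by periodicity of $A$ and $f$ (hypothesis \eqref{eq:periodicity}) the solution operator of \eqref{eq:parabolic} commutes with integer translations. Writing $v(1,y;V(x,z+x\cdot e))$ and translating $y\mapsto y+L$ inside the operator, one gets $\mathcal{F}_{e,c}[V](y+L,z)=v(1,y+L;V(x,z+x\cdot e))=v(1,y;V(x+L,z+(x+L)\cdot e))=v(1,y;V(x,(z+L\cdot e)+x\cdot e))=\mathcal{F}_{e,c}[V](y,z+L\cdot e)\cdot$ — wait, this needs the $y$-translation to be offset correctly; carefully tracking the definition \eqref{eq:def_mapping} one checks that the $z$-shift and $y$-shift cancel so that $\mathcal{F}_{e,c}[V](y+L,z)=\mathcal{F}_{e,c}[V](y,z)$, i.e.\ $\mathcal{F}_{e,c}[V](\cdot,z)$ is $1$-periodic in $y$.

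\medskip
\textbf{Monotonicity.} If $V_1\le V_2$ then for each fixed $z$ the initial data $x\mapsto V_i(x,z+x\cdot e)$ are ordered, so by the parabolic comparison principle the solutions at time $1$ are ordered, giving $\mathcal{F}_{e,c}[V_1]\le\mathcal{F}_{e,c}[V_2]$ pointwise in $(y,z)$. For the strict inequality, the extra assumption says precisely that for every $z$ the two initial data are not identically equal (their difference has positive supremum along the slice $x\mapsto(x,z+x\cdot e)$), so the parabolic strong maximum principle — valid since $f$ is Lipschitz, hence we may write $f(x,V_2)-f(x,V_1)=b(t,x)(V_2-V_1)$ with $b\in L^\infty$ — forces strict inequality everywhere at time $1$, i.e.\ $\mathcal{F}_{e,c}[V_1]<\mathcal{F}_{e,c}[V_2]$ everywhere.

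\medskip
\textbf{Continuity and compactness.} Both follow from interior parabolic estimates for \eqref{eq:parabolic}. Given $z$ and a sequence with $V_n(y,z+y\cdot e)\to V_\infty(y,z+y\cdot e)$ locally uniformly, let $w_n,w_\infty$ be the corresponding solutions of \eqref{eq:parabolic} on $(0,1]\times\R^N$; since the $V_n(\cdot,z+(\cdot)\cdot e)$ are locally uniformly bounded (locally uniformly convergent, hence eventually bounded on compacts) and the equation has bounded coefficients, $L^p$ (or Schauder, using smoothness of $A$ and Lipschitz $f$) parabolic estimates give $w_n$ bounded in $C^{1+\alpha/2,2+\alpha}_{loc}$ on $[1/2,1]\times K$ for compact $K$; extracting a subsequence $w_n\to w$ in $C^{1,2}_{loc}$ and passing to the limit in the equation and in the initial trace identifies $w=w_\infty$ by uniqueness of the Cauchy problem, and since the limit is unique the whole sequence converges: $w_n(1,\cdot)=\mathcal{F}_{e,c}[V_n](\cdot,z+(\cdot)\cdot e-c)\to\mathcal{F}_{e,c}[V_\infty](\cdot,z+(\cdot)\cdot e-c)$ in $L^\infty_{loc}(\R^N)$, which is $(iii)$. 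For $(iv)$, if $(V_n)$ is merely bounded in $L^\infty(\R^{N+1})$, the same interior estimates (now without any convergence of the data) give that $y\mapsto\mathcal{F}_{e,c}[V_n](y,z+y\cdot e)=w_n(1,y)$ is bounded in $C^{1}_{loc}(\R^N)$ uniformly in $n$, so by Arzel\`a--Ascoli a subsequence converges in $L^\infty_{loc}(\R^N)$.

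\medskip
I do not expect any genuine obstacle here: the only point requiring slight care is the bookkeeping of the simultaneous $y$- and $z$-shifts in part $(i)$, and the observation for part $(iv)$ that regularity is claimed only along the affine slice $y\mapsto(y,z+y\cdot e)$ — which is exactly what \eqref{eq:def_mapping1} delivers — rather than jointly in $(y,z)$, for which one would need regularity of the data in $z$ that is not assumed.
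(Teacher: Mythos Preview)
Your approach is essentially the same as the paper's, but two points deserve cleaning up.

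For $(i)$, your computation is garbled (including a literal ``wait'' mid-sentence) because you dropped the $-y\cdot e+c$ term from the definition \eqref{eq:def_mapping}. With the full formula the cancellation is immediate: $\mathcal{F}_{e,c}[V](y+L,z)=v(1,y+L;V(x,z+x\cdot e-(y+L)\cdot e+c))=v(1,y;V(x+L,z+(x+L)\cdot e-(y+L)\cdot e+c))$ by the periodicity of the equation, and the shifts $L\cdot e$ cancel in the second argument, leaving $v(1,y;V(x+L,z+x\cdot e-y\cdot e+c))=\mathcal{F}_{e,c}[V](y,z)$ by the periodicity of~$V$. This is exactly the paper's computation.

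For $(iii)$, your compactness-then-identification route has a subtle snag: interior parabolic estimates on $[1/2,1]\times K$ do not by themselves let you identify the initial trace of the limit solution, and invoking ``uniqueness of the Cauchy problem'' here presupposes a continuity in the data that is precisely what you are proving. The paper avoids this by a more direct argument: set $w_n(t,y):=v(t,y;V_n(x,z+x\cdot e))-v(t,y;V_\infty(x,z+x\cdot e))$, observe that $w_n$ solves a \emph{linear} equation $\partial_t w_n=\text{div}(A\nabla w_n)+g_n w_n$ with $|g_n|$ bounded by the Lipschitz constant of~$f$, and conclude $w_n(1,\cdot)\to0$ in $L^\infty_{loc}$ directly from the comparison principle (or Gaussian bounds on the fundamental solution) since the initial data $w_n(0,\cdot)\to0$ locally uniformly. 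This is both shorter and gap-free. Your treatment of $(ii)$ and $(iv)$ matches the paper.
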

\begin{proof}
Let $V(y,z)$ be a periodic function in its first variable. Then for any $y \in \R^N$, $z\in \mathbb{R}$ and $L \in \Z^N$, the periodicity of equation \eqref{eq:parabolic} yields
\[\begin{split}
\F{c}[V](y+L,z) &=v(1,y+L;V (x , z+ x \cdot e  - y \cdot e - L\cdot e + c))\\
&=v(1,y;V (x+L , z+ x \cdot e  - y \cdot e + c))\\
&=\F{c}[V](y,z).
\end{split}\]
This proves $(i)$.

Statement $(ii)$ simply follows from \eqref{eq:def_mapping1} and the parabolic weak and strong comparison principles.

The continuity property follows from standard parabolic estimates.
Indeed, take a sequence $(V_n (y,z+ y\cdot e))_{n \in\mathbb{N}}$  converging locally uniformly in $y$ and for some 
$z\in\R$ to $V_\infty (y,z+ y\cdot e)$. Then the functions $(w_n)_{n \in\mathbb{N}}$ defined~by
$$w_n (t,y) := v (t,y;V_n (x,z+ x \cdot e)) - v (t,y; V_\infty (x,z+x \cdot e))$$
solve, for any fixed $z \in \mathbb{R}$, a linear parabolic equation of the type
$$\partial_t w_n =\text{div} (A(y) \nabla w_n) +  g^{z,n} (t,y) w_n,$$
with $|g^{z,n}|$ less than or equal to the Lipschitz constant of $f$,
together with the initial condition $V_n (x, z + x \cdot e) - V_\infty (x,z+x \cdot e)$. 
It follows from the comparison principle and parabolic estimates
that $(w_n)_{n\in\N}$ converges to 0 locally uniformly with respect to~$t>0$, $y \in \mathbb{R}^N$ 
. In particular, $y\mapsto v(1,y;V_n(x,z+x \cdot e))$ converges locally uniformly as $n \to +\infty$ to $v(1,y;V_\infty (x,z + x \cdot e))$,
which owing to \eqref{eq:def_mapping1} translates into the desired property.

The last statement $(iv)$ is an immediate consequence of the parabolic estimates.
\end{proof}

Let us point out that the operators 
$\mathcal{F}_{e,c}$ were initially introduced by Weinberger in \cite{W02}, 
who exhibited the existence of a spreading speed of solutions in a rather general context, but only proved the existence of pulsating fronts in the monostable case. These operators also fall into the scope of~\cite{FZ} (though they 
lack the compactness property required in some of their results). 
In particular, though one may proceed as in the aforementioned paper at least in the bistable case, we suggest here a slightly different approach. In some sense, our method is actually closer to the initial argument of Weinberger in~\cite{W02}, and though we do not address this issue here, it also seems well-suited to check that the speed of the pulsating front (or the speeds of the propagating terrace) also determines the spreading speed of solutions 
of the Cauchy problem associated with~\eqref{eq:parabolic}.

\subsection{Basic properties of the iterative scheme}\label{sec:ac1}

From this point until the end of Section~\ref{sec:discreteTF}, we assume that the following holds.
\begin{assump}\label{ass:mix}
The equation \eqref{eq:parabolic} admits a finite number of asymptotically stable steady states,
among which $0$ and $\bar p$.

Furthermore, for any pair of ordered periodic steady states $q < \tilde{q}$, 
there is an asymptotically stable steady state $p$ such that $q \leq p \leq \tilde{q}$.
\end{assump}

This hypothesis is guaranteed by both the bistable Assumption~\ref{ass:bi} and the multistable Assumption~\ref{ass:multi}.

For the sake of completeness as well as for convenience (several of the following properties will play in important role here), we repeat some of the arguments of~\cite{W02}. In particular, we start by reproducing how to define the speed $c^*$ (depending on the direction $e\in \mathbb{S}^{N-1}$) which was shown in~\cite{W02} to be the spreading speed for planar like solutions of the Cauchy problem. Roughly, for any $c<c^*$ we construct a time increasing solution of the parabolic equation in the moving frame with speed $c$ in the direction $e$. Later we shall turn to a new construction of a pulsating travelling front connecting $\bar p$ to a stable periodic steady state $p <\bar p$ with~speed~$c^*$.

The construction starts with an $L^\infty$ function $\phi$ satisfying the following:
\Fi{ass:phi}
\begin{cases}
\displaystyle
\phi (y,z) \mbox{ is periodic in $y \in \R^N$, and nonincreasing in $z \in \R$},\vspace{5pt}\\
\phi (y,z) \mbox{ is uniformly continuous in $(y,z)\in\R^{N+1}$}, 
\vspace{5pt}\\
\displaystyle\phi(y,z)=0 \mbox{ for } y\in\R^N,\ z\geq0 , \vspace{5pt}\\
\displaystyle\phi(y,-\infty)<\bar p(y),\vspace{5pt}\\
\displaystyle\exists\delta>0\ \text{ such that } \phi(y,-\infty)-\delta\ \text{ lies
	in the basin of attraction of $\bar p$.}
\end{cases}
\Ff
Observe that the limit $\phi(y,-\infty)$ exists uniformly with respect to $y$, and thus it is continuous (and periodic).
The last condition is possible due to the (asymptotic) stability of 
$\bar p$. Owing to the comparison principle, it implies that $\phi(y,-\infty)$ lies
in the basin of attraction of $\bar p$ too.

Then, for any $e \in \mathbb{S}^{N-1}$ and $c \in \mathbb{R}$, we define the sequence $(a_{c,n})_{n\in\N}$~by
\Fi{eq:def_acn}
\begin{array}{c} a_{c,0} := \phi,\vspace{5pt}\\
a_{c,n+1} := \max \{ \phi , \mathcal{F}_{e,c} [a_{c,n}] \},
\end{array}
\Ff
where $\mathcal{F}_{e,c}$ was defined in \eqref{eq:def_mapping}. The maximum is to be taken at each point $(y,z)$. 
\begin{lem}\label{lem:a1}
	The sequence $(a_{c,n})_{n\in\N}$ defined by~\eqref{eq:def_acn} is nondecreasing and satisfies $0<a_{c,n}< \bar p$ for $n\geq1$. 
Moreover, $a_{c,n}(y,z)$ is periodic in $y$, nonincreasing with respect to $c$ and $z$ and satisfies 
$a_{c,n}(y,+\infty)\equiv0$ uniformly with respect to $y$.
Lastly, $a_{c,n} (y,z+y\cdot e)$ is uniformly continuous 
in $y\in\R^N$, uniformly with respect to $z\in\R$, $n \in \mathbb{N}$ and $c\in\R$.
\end{lem}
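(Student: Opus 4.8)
The plan is to establish each claimed property of $(a_{c,n})_{n\in\N}$ by induction on $n$, exploiting the three structural properties of $\mathcal F_{e,c}$ from Proposition~\ref{prop:Fec} (periodicity, monotonicity, continuity) together with elementary parabolic comparison arguments. First I would note that, since $a_{c,n+1}$ is obtained from $a_{c,n}$ by applying $\mathcal F_{e,c}$ and then taking a pointwise maximum with the fixed function $\phi$, the property that $a_{c,n}(y,z)$ is periodic in $y$ follows immediately from part $(i)$ of Proposition~\ref{prop:Fec} and the periodicity of $\phi$ in~\eqref{ass:phi}. For the monotonicity in $n$: since $a_{c,0}=\phi\le\max\{\phi,\mathcal F_{e,c}[\phi]\}=a_{c,1}$, and since $\mathcal F_{e,c}$ is order-preserving by part $(ii)$, an easy induction gives $a_{c,n}\le a_{c,n+1}$ for all $n$; taking the pointwise maximum with $\phi$ preserves the inequality.

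Next I would treat the two-sided bound $0<a_{c,n}<\bar p$ for $n\ge1$. The lower bound: $a_{c,n}\ge\max\{\phi,0\}\ge0$, and strict positivity on all of $\R^{N+1}$ follows from the strong maximum principle applied to $\mathcal F_{e,c}$ (any solution of~\eqref{eq:parabolic} started from a nonnegative, not identically zero datum is strictly positive after time~$1$), combined with the fact that $\phi$ is not identically zero. The upper bound: both $\phi<\bar p$ (from~\eqref{ass:phi}, using $\phi(y,-\infty)<\bar p(y)$ and $\phi\le 0$ elsewhere, so $\phi\le\phi(y,-\infty)<\bar p$) and $\bar p$ is a stationary solution hence a fixed point of $\mathcal F_{e,c}$; since $\phi\le\bar p$ we get by induction and monotonicity $\mathcal F_{e,c}[a_{c,n}]\le\mathcal F_{e,c}[\bar p]=\bar p$, and $\max\{\phi,\mathcal F_{e,c}[a_{c,n}]\}\le\bar p$; strictness follows again from the strong maximum principle since $a_{c,n}\not\equiv\bar p$ (e.g. because $a_{c,n}(y,z)=0$ for $z$ large). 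The monotonicity in $z$ is inherited from $\phi$: if $z_1\le z_2$ then, because $\phi(y,z+x\cdot e-y\cdot e+c)$ is nonincreasing in $z$ and the evolution operator $v(1,y;\cdot)$ of~\eqref{eq:parabolic} is order-preserving, $\mathcal F_{e,c}[a_{c,n}](y,z_1)\ge\mathcal F_{e,c}[a_{c,n}](y,z_2)$ whenever $a_{c,n}$ is nonincreasing in $z$; taking the max with the nonincreasing $\phi$ preserves this. The monotonicity in $c$ uses the analogue of~\eqref{E0} in this setting: one checks from~\eqref{eq:def_mapping} that increasing $c$ amounts to shifting the argument of $V$ in the direction of decreasing $z$, hence $\mathcal F_{e,c_1}[V]\ge\mathcal F_{e,c_2}[V]$ for $c_1\le c_2$ when $V$ is nonincreasing in $z$; combined with the $z$-monotonicity just established, an induction gives $a_{c_1,n}\ge a_{c_2,n}$.

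For the asymptotics $a_{c,n}(y,+\infty)\equiv0$ uniformly in $y$, I would argue by induction: it holds for $a_{c,0}=\phi$ by~\eqref{ass:phi}; assuming $a_{c,n}(y,z)\to0$ uniformly in $y$ as $z\to+\infty$, a standard parabolic estimate shows $v(1,y;a_{c,n}(x,z+x\cdot e-y\cdot e+c))\to0$ uniformly in $y$ as $z\to+\infty$ (comparing with the solution started from a small supremum datum, which stays small up to time~$1$ by Gronwall-type control using the Lipschitz bound on $f$ and $f(\cdot,0)\equiv0$), and then taking the max with $\phi$, which also vanishes at $z=+\infty$, preserves the property. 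Finally, the uniform continuity of $y\mapsto a_{c,n}(y,z+y\cdot e)$, uniformly in $z$, $n$ and $c$: for $n=0$ this is~\eqref{ass:phi}; for the inductive step I would use the reformulation~\eqref{eq:def_mapping1}, namely $\mathcal F_{e,c}[a_{c,n}](y,z+y\cdot e-c)=v(1,y;a_{c,n}(x,z+x\cdot e))$, so that interior parabolic estimates applied to the equation~\eqref{eq:parabolic} bound the modulus of continuity of $y\mapsto v(1,y;\cdot)$ purely in terms of the $L^\infty$ bound $0\le a_{c,n}\le\bar p$ and the fixed coefficients $A$, $f$ — and crucially \emph{not} in terms of the modulus of continuity of the datum; the shift $z\mapsto z+y\cdot e-c$ is an isometry in $y$ so it does not affect uniform continuity, and taking the pointwise maximum with $\phi$ (itself uniformly continuous with a modulus independent of everything) preserves a uniform modulus. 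The main obstacle is this last point: one must be careful that the parabolic estimate used is genuinely a \emph{smoothing} estimate at time $t=1$ depending only on $\|a_{c,n}\|_{L^\infty}$ (hence uniform in $n,c,z$), rather than a mere continuous-dependence estimate that would propagate the modulus of $\phi$ and potentially degrade it along the iteration — though in fact even the non-smoothing estimate would suffice here since the modulus of $\phi$ is fixed, the smoothing estimate is the clean way to get uniformity and to make the argument robust.
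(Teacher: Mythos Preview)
Your proof is correct and follows essentially the same approach as the paper's, proceeding by induction and invoking Proposition~\ref{prop:Fec} together with parabolic smoothing at each step. One small slip: your parenthetical justification ``because $a_{c,n}(y,z)=0$ for $z$ large'' is false for $n\ge1$ (you have just argued $a_{c,n}>0$ everywhere); the strict inequality $a_{c,n}<\bar p$ follows instead from the inductive hypothesis via the strict monotonicity in Proposition~\ref{prop:Fec}$(ii)$.

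One difference worth flagging: for the limit $a_{c,n}(\cdot,+\infty)=0$, the paper does not stop at your qualitative induction but derives the quantitative exponential bound $a_{c,n}(y,z)\le C\,e^{n\lambda(\overline c-c)}e^{-\lambda z}$ (inequality~\eqref{acn<1}) by comparison with a supersolution of the form $e^{-\lambda(x\cdot e-\overline c t)}$. The paper explicitly says it does so because this estimate ``will prove useful in the sequel'', and indeed it is reused in the proof of Lemma~\ref{lem:ac1}$(ii)$ to show that $c^*<+\infty$. Your Gronwall-type argument is perfectly adequate for the present lemma, but you would need to supply the exponential bound separately when you reach that point.
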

\begin{proof}
Firstly, recall from Proposition~\ref{prop:Fec}$(ii)$ that the operator 
$\F{c}$ is order-preserving.
By recursion, one readily checks that the sequence $(a_{c,n})_{n\in\N}$ 
is nondecreasing.
Moreover, $0 <a_{c,n}<\bar p$ for $n\geq1$, always by Proposition~\ref{prop:Fec}$(ii)$. 
Another consequence of \eqref{eq:def_mapping1} and the comparison principle is that if $V(y,z)$ is monotone in $z$ then so is 
$\F{c}[V](y,z)$; whence the monotonicity of $a_{c,n} (y,z)$ with respect to $z$.

Let us now investigate the monotonicity with respect to $c$. We derive it
by noting that if $c_1 < c_2$, then \eqref{eq:def_mapping1} yields
\Fi{c1c21}
\F{c_1}[V](y,z+y\cdot e-c_1)=\F{c_2}[V](y,z+y\cdot e-c_2),
\Ff
for any function $V$. If furthermore $V (y,z)$ is nonincreasing in its second variable, then so is~$\F{c_2} [V]$ and we deduce that
$$\mathcal{F}_{e,c_1} [V] \geq \mathcal{F}_{e,c_2} [V].$$
Thus, owing to the monotonicity of the $\F{c}$,
the monotonicity of $a_{c,n}$ with respect to~$c$ follows by iteration.

Next, we want to show that $a_{c,n} (y,+\infty) =0$. 
This is an easy consequence of the same property for $\phi$,
but we now derive a quantitative estimate which will prove useful in the sequel.
For this, we observe that, for any fixed $\lambda>0$,
there exists a supersolution 
of \eqref{eq:parabolic} of the type $e^{-\lambda ( x \cdot e - \overline{c} t)}$,
provided $\overline c$ is sufficiently large.
Namely, by bounding~$f(x,u)$ by a linear function $Ku$ and also using  the boundedness of the components of the diffusion matrix and their derivatives, 
we can find $\overline c$ 
such that $e^{-\lambda ( x \cdot e - \overline{c} t)}$ satisfies
$$\partial_t u\geq  \text{div} (A (x) \nabla u)  + Ku,\quad
t \in \mathbb{R},\ x \in \mathbb{R}^N.$$
Let us show that if $V$
and $C>0$ satisfy
$$\forall (y,z)\in\R^{N+1},\quad
V (y, z) \leq C e^{-\lambda z},$$
then there holds
$$\forall (y,z)\in \R^{N+1},\quad
\mathcal{F}_{e,c}[V](y,z) \leq C  e^{\lambda ( \overline{c}-c) }  e^{ - \lambda  z}.$$
Indeed, we have that
$$V ( x , z+ x \cdot e - y \cdot e + c) \leq \big(C  e^{ - \lambda (z - y \cdot e + c)}\big)
e^{ - \lambda x \cdot e},$$
whence
$$\mathcal{F}_{e,c}[V](y,z)=v(1,y;V (x , z+ x \cdot e  - y \cdot e + c))
\leq C e^{-\lambda (z +c  -\overline{c})}.$$
Up to increasing $\overline{c}$, we can assume without loss of generality that $\overline{c} \geq c$.
Now, for any~$C \geq\max\bar p$, we have that 
$$\forall (y,z)\in\R^{N+1},\quad
\phi (y, z) \leq C e^{-\lambda z}.$$
As a consequence
$$\forall (y,z)\in \R^{N+1},\quad
a_{c,1}(y,z) = \max \{ \phi , \mathcal{F}_{e,c} [\phi] \}
 \leq C  e^{\lambda ( \overline{c}-c) }  e^{ - \lambda  z},$$
and therefore, by iteration,
\Fi{acn<1}
\forall n \in \N, \ \forall (y,z)\in \R^{N+1},\quad a_{c,n}(y,z)
 \leq C e^{n \lambda (\overline{c}-c) } e^{- \lambda z}.
\Ff
In particular $a_{c,n}(y,+\infty)=0$   uniformly with respect to $y$; however, this limit may not be uniform with respect to $c$ nor to $n$.

Finally, we point out that the uniform continuity in the crossed variables follows from our choice of $\phi$ and parabolic estimates. Indeed, the function 
$$y \mapsto  \mathcal{F}_{e,c} [a_{c,n-1}] (y, z + y \cdot e) = v (1,y; a_{c,n-1} (x,z + x \cdot e + c))$$
is not only uniformly continuous but also $C^2$, 
and its derivatives are uniformly bounded by some constant which only depends on the terms in the equation \eqref{eq:parabolic}
as well as $\max\bar p$. 
Recalling that $a_{c,n}$ is the maximum of $\mathcal{F}_{e,c} [a_{c,n-1}]$ and $\phi$, the latter being also uniformly continuous, we reach the desired conclusion.
\end{proof}
 From Lemma~\ref{lem:a1} and the fact that the mapping $\mathcal{F}_{e,c}$ preserves spatial periodicity, one readily infers the following.
\begin{lem}\label{lem:a2}
The  pointwise limit
 $$a_c(y,z):=\lim_{n\to+\infty} a_{c,n}(y,z),$$
 is well-defined,
 fulfils $\phi \leq a_c \leq \bar p$ and $a_c(y,z)$ is periodic in $y$ and nonincreasing 
with respect to both $z$ and $c$. 

Moreover, the convergence
 $$a_{c,n}(y,z+y\cdot e)\to a_c(y,z+y\cdot e)
 \quad\text{as $n \to +\infty$}$$
holds locally uniformly in $y\in\R^N$, but still pointwise in $z\in \mathbb{R}^N$.
\end{lem}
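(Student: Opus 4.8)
The plan is to derive all of the assertions directly from Lemma~\ref{lem:a1}, the only genuine ingredient being a soft compactness argument for the local uniform convergence in $y$.

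First I would fix $(y,z)\in\R^{N+1}$ and observe that, by Lemma~\ref{lem:a1}, the real sequence $(a_{c,n}(y,z))_{n\in\N}$ is nondecreasing and bounded above by $\bar p(y)$; hence it converges, and this is how $a_c(y,z)$ is defined. Passing to the limit in $\phi=a_{c,0}\leq a_{c,n}\leq\bar p$ yields $\phi\leq a_c\leq\bar p$. The periodicity in $y$ and the monotonicity with respect to $z$ and to $c$ of each $a_{c,n}$, also provided by Lemma~\ref{lem:a1}, are preserved under pointwise limits, which gives the corresponding properties of $a_c$. (Periodicity could equivalently be read off from the fact that $\mathcal{F}_{e,c}$ preserves spatial periodicity.)

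Then I would upgrade the pointwise convergence to local uniform convergence in $y$, fixing $z\in\R$ throughout; the rate thus obtained depends on $z$, so one only gets pointwise convergence in that variable, in accordance with the statement. By the last part of Lemma~\ref{lem:a1}, the functions $y\mapsto a_{c,n}(y,z+y\cdot e)$ share a common modulus of continuity on $\R^N$, independent of $n$ (and of $z$ and $c$); hence the pointwise limit $y\mapsto a_c(y,z+y\cdot e)$ has the same modulus of continuity and in particular is continuous. We are thus dealing with a nondecreasing sequence of continuous functions on $\R^N$ converging pointwise to a continuous limit, so Dini's theorem yields uniform convergence on every compact subset of $\R^N$, which is exactly the claimed local uniform convergence. (One may also avoid invoking continuity of the limit and argue directly: cover a given compact set by finitely many small balls on which every function $y\mapsto a_{c,n}(y,z+y\cdot e)$ oscillates by less than a prescribed $\eps$, and conclude using pointwise convergence at the finitely many centres.) I expect this last passage from pointwise to locally uniform convergence to be the only — and rather mild — obstacle: it is a standard equicontinuity/Dini argument, all the hard analytic work having already been carried out in establishing the uniform modulus of continuity in Lemma~\ref{lem:a1}.
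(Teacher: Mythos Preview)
Your proposal is correct and matches the paper's intent: the paper does not give a separate proof but simply states that the lemma ``readily'' follows from Lemma~\ref{lem:a1} and the periodicity-preserving property of $\mathcal{F}_{e,c}$. You have spelled out exactly the expected details --- pointwise convergence from monotonicity and boundedness, inheritance of the structural properties, and the upgrade to local uniform convergence in $y$ via equicontinuity and Dini --- which is precisely what the paper leaves implicit.
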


We emphasize that no regularity properties could be expected for 
$a_c$ with respect to the second variable.
Let us further note that, as a byproduct of the proof of Lemma~\ref{lem:a1}, and more specifically of \eqref{c1c21}, we deduce by iteration that
	\begin{equation}\label{acc'1}
	\forall c<c',\ n\in\N,\quad
	a_{c,n}(\cdot,\cdot + n(c'-c))\leq a_{c',n} .
	\end{equation}
This will be used in later arguments, in particular in the proof of Lemma~\ref{lem:limit1} below.	

\subsection{Defining $c^*$}\label{sec:c^*}

We want to define $c^*$ as the largest $c$ such that $a_c \equiv \bar p$, where $a_c$ comes from Lemma~\ref{lem:a2}. This is the purpose of the following lemma.
\begin{lem}\label{lem:ac1}
For any $c \in \mathbb{R}$, the function $a_c$ satisfies $a_c (y,-\infty) =  \bar p (y)$
uniformly with respect to $y\in[0,1]^N$. 
Moreover,
\begin{enumerate}[$(i)$]
\item $a_c \equiv \bar p$ for $-c$ large enough;
\item $a_c \not\equiv \bar p$ for $c$ large enough.
\end{enumerate}
In particular, the following is a well-defined real number:
$$c^* := \sup \{ c \in \mathbb{R} \ : \ a_c (y,z) \equiv \bar p(y) \}.$$
\end{lem}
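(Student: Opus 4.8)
The plan is to establish the three announced claims in turn and then read off the definition of $c^*$. First I would prove the asymptotic identity $a_c(y,-\infty)=\bar p(y)$ uniformly in $y$. Since $a_c\geq\phi$ and $\phi(y,-\infty)$ lies in the basin of attraction of $\bar p$, the monotonicity in $z$ gives $a_c(y,z)\geq\phi(y,z)$, and for any $z_0$ the slice $a_c(\cdot,z_0+\cdot e)$ dominates a periodic function close to $\phi(\cdot,-\infty)$; applying the comparison principle to the Cauchy problem and using that $a_c\geq\mathcal F_{e,c}[a_c]$ is a supersolution in the iterated sense (more precisely, that $a_{c,n+1}\geq\mathcal F_{e,c}[a_{c,n}]$ and passing to the limit), one forces $a_c(y,z)\to\bar p(y)$ as $z\to-\infty$, uniformly in $y$ because all the bounds are periodic and the convergence of the Cauchy problem to $\bar p$ from a neighbourhood is uniform. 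The key mechanism is that $\phi(y,-\infty)$, being in the basin of attraction of $\bar p$, drags the whole left tail up to $\bar p$ under iteration of $\mathcal F_{e,c}$, and $a_c$ being a fixed-point-like object above these iterates inherits the limit.

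For part $(i)$, $a_c\equiv\bar p$ when $-c$ is large: here I would use the exponential supersolution machinery from the proof of Lemma~\ref{lem:a1} in reverse, or more directly construct a subsolution. Take the front-like subsolution built from the principal eigenfunction of the linearisation at $0$ (which exists since $0$ is linearly... in the bistable case merely asymptotically stable, so instead) — cleanly, I would argue as follows: fix $c$ very negative. By \eqref{E0}-type translation identities (specifically \eqref{eq:def_mapping1}), making $c\to-\infty$ shifts the profile arbitrarily far to the left at each step, so that $\mathcal F_{e,c}[\phi](y,z)$ is, for $-c$ large, bounded below on any fixed half-line $\{z\leq Z\}$ by a function whose left limit exceeds $\phi(y,-\infty)-\delta$. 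Combined with the basin-of-attraction hypothesis and iterating, $a_{c,n}\uparrow\bar p$ locally uniformly, whence $a_c\equiv\bar p$. For part $(ii)$, $a_c\not\equiv\bar p$ when $c$ is large: use the exponential supersolution bound \eqref{acn<1}, namely $a_{c,n}(y,z)\leq Ce^{n\lambda(\bar c-c)}e^{-\lambda z}$. If $c>\bar c$ then $Ce^{n\lambda(\bar c-c)}\to0$, so letting $n\to\infty$ gives $a_c(y,z)\leq 0\cdot e^{-\lambda z}$... wait, that is not quite it since the bound degenerates only after taking $n\to\infty$; rather, for $c>\bar c$ the bound at fixed large $z$ stays below $\bar p$ uniformly in $n$, so $a_c(y,z)<\bar p(y)$ for $z$ large, hence $a_c\not\equiv\bar p$.

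Once $(i)$ and $(ii)$ are in hand, the set $S:=\{c\in\R : a_c\equiv\bar p\}$ is nonempty and bounded above, so $c^*:=\sup S$ is a well-defined real number; moreover by the monotonicity of $a_c$ in $c$ (Lemma~\ref{lem:a2}), $S$ is an interval unbounded below, i.e. $S\supseteq(-\infty,c^*)$, which is the content implicit in the statement. I expect the main obstacle to be the uniformity (in $y$, and the correct handling of the $z\to-\infty$ limit) in the first claim: because $a_c$ has no regularity in $z$, one cannot differentiate or use elliptic estimates in that variable, and one must instead carefully propagate the comparison-principle bounds through the max-with-$\phi$ iteration, keeping all comparison functions periodic so that convergence rates transfer uniformly. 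The translation identities \eqref{eq:def_mapping1} and \eqref{c1c21}, together with the quantitative exponential bound \eqref{acn<1} and the basin-of-attraction clause in \eqref{ass:phi}, are exactly the tools needed, so the argument should be routine modulo bookkeeping.
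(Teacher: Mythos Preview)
Your outline has the right architecture, but the central quantitative step is missing, and without it neither the uniform limit $a_c(\cdot,-\infty)\equiv\bar p$ nor statement $(i)$ actually goes through. The paper's proof hinges on an explicit \emph{sub}solution of the form
\[
u_\gamma(t,x)=\psi(t,x)-\gamma\,e^{\lambda(x\cdot e+\overline c\,t)},
\]
where $\psi$ is the solution of \eqref{eq:parabolic} emerging from $\phi(\cdot,-\infty)-\delta$ (which tends to $\bar p$ by the basin-of-attraction clause). Comparison with this family yields the pointwise lower bound
\[
(\mathcal F_{e,c})^n[\phi](y,z)\;\geq\;\psi(n,y)-C\,e^{\lambda[z+y\cdot e+n(c+\overline c)]},
\]
which is what does all the work: for $c<-\overline c$ the exponential dies and one gets $(i)$ directly; for general $c$ one evaluates at $z=-2n(c+\overline c+1)$ and lets $n\to\infty$ to force $a_c(\cdot,-\infty)=\bar p$ uniformly. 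Your sketch invokes ``the basin of attraction drags the tail up'' and ``the shift from $c\to-\infty$ pushes the profile left,'' but neither heuristic controls what happens at \emph{every} finite $z$ after passage to the limit $n\to\infty$; a locally-uniform conclusion is not sufficient for $a_c\equiv\bar p$, and the lack of regularity in $z$ means you cannot upgrade it without a quantitative comparison of this kind. You even flag ``the exponential supersolution machinery \dots\ in reverse'' and then abandon it --- that was exactly the right instinct.

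For $(ii)$ your idea is correct, but watch the direction of the hypothesis in \eqref{acn<1}: that bound is derived for $c\leq\overline c$, not $c>\overline c$. The clean way is to take $c=\overline c$, so that the factor $e^{n\lambda(\overline c-c)}$ equals $1$ and the bound $a_{\overline c,n}(y,z)\leq Ce^{-\lambda z}$ is uniform in $n$; hence $a_{\overline c}\not\equiv\bar p$, and monotonicity in $c$ extends this to all larger $c$.
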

\begin{proof}
We first prove that, for $-c$ large enough,
\Fi{Fn11}
(\mathcal{F}_{e,c})^n [\phi] (y,z) \to \bar p(y)\quad\text{as $n \to +\infty$},
\Ff
uniformly with respect to $y\in[0,1]^N$ and $z \in (-\infty,Z_0]$, for any $Z_0 \in \mathbb{R}$. 
In particular, because $a_{c,n} \geq (\mathcal{F}_{e,c})^n [\phi]$ by the monotonicity of $\F{c}$, 
this will yield statement $(i)$ of the lemma.

In order to show \eqref{Fn11}, we first introduce, in a similar fashion as in the proof of 
Lemma~\ref{lem:a1}, two real numbers $\lambda >0$ and $\overline{c}$ large enough such that the function $e^{\lambda(x \cdot e + \overline{c} t)}$ satisfies the parabolic inequality
$$\partial_t u \geq \text{div} (A \nabla u ) + K u.$$
Here $K$ is the supremum with respect to $x$
of the Lipschitz constants of $u\mapsto f(x,u)$.

Next, we let $\psi (t,x)$ be the solution of \eqref{eq:parabolic}
emerging from the initial datum $\phi(x,-\infty)-\delta$, where~$\delta$ is the positive
constant in condition~\eqref{ass:phi}, that is, such that
$\phi(x,-\infty)-\delta$ lies in the basin of attraction of $\bar p$. Hence
$\psi (t, \cdot) \to \bar p$ uniformly as $t \to +\infty$.
The choice of $\lambda$ and $\overline{c}$ imply that, for any $\gamma>0$, the function
$$u_\gamma (t,x) := \psi (t,x) - \gamma e^{\lambda (x \cdot e  + \overline{c} t)}$$
is a subsolution of~\eqref{eq:parabolic}. Let us now pick $C$ large enough such that
$$\forall(y,z)\in\R^{N+1},\quad
\phi (y,z) \geq \phi(y,-\infty)-\delta - C e^{\lambda z},$$
and thus, for any given $c\in\R$,
$$\phi (x , z+x\cdot e) \geq 
\phi(x,-\infty)-\delta - C e^{\lambda (z+x \cdot e)}=u_{Ce^{\lambda z}}(0,x).$$
Now, iterating \eqref{eq:def_mapping1} one gets
$$
\forall n\in\N,\quad
(\mathcal{F}_{e,c})^n[V](y,z+y\cdot e-nc)=v(n,y;V (x , z + x \cdot e)).
$$
It then follows from the comparison principle that
$(\mathcal{F}_{e,c})^n [\phi ] (y,z-nc) \geq u_{Ce^{\lambda z}}(n,y)$, that~is, 
\Fi{Fn>psi}
(\mathcal{F}_{e,c})^n [\phi ] (y,z) \geq 
\psi (n,y) -  C e^{\lambda[z+y\cdot e+n (c+ \overline{c})]}.
\Ff
From one hand, this inequality implies that if 
$c < - \overline{c}$ then \eqref{Fn11} holds uniformly with respect to $y\in[0,1]^N$ and 
$z \in (-\infty,Z_0]$, for any $Z_0 \in \R$, whence statement $(i)$ of the lemma. From the other hand, if $c\geq - \overline{c}$ we derive
$$a_{c,n} (y,-2n(c+ \overline{c}+1)) \geq \psi (n,y) -  
C e^{-n \lambda (c+ \overline{c}+2)+\lambda y\cdot e}
\geq \psi (n,y) -  C e^{\lambda(-2n+y\cdot e)}.$$
Because the sequence $(a_{c,n})_{n\in\N}$ is nondecreasing and converges to $a_c$, 
we get that
$$a_{c} (y,-2n(c+ \overline{c}+1)) \geq \psi (n,y) -  C e^{\lambda(-2n+y\cdot e)} ,$$
for any $n \in \mathbb{N}$. 
Passing to the limit as $n \to +\infty$ and
recalling that $a_c$ is monotone with respect to its second variable,
we infer that $a_c(y,-\infty)=\bar p(y)$ uniformly with respect to
$y\in[0,1]^N$.

It remains to prove statement $(ii)$. Fix $\lambda>0$.
Because $\phi$ satisfies \eqref{ass:phi},
for~$C:=\max\bar p$ there holds that
$\phi (y, z) \leq C e^{-\lambda z}$ for all $(y,z)\in\R^{N+1}$.
As seen in the proof of Lemma~\ref{lem:a1}, this implies that
\eqref{acn<1} holds for all $c$ smaller than or equal
a suitable value~$\overline{c}$, and then
in particular for $c = \overline{c}$, i.e., 
$a_{\overline{c},n}(y,z)\leq Ce^{-\lambda z}$ for all $n\in\N$.
As a consequence, $a_{\overline{c}} \not\equiv \bar p$ and, by monotonicity with respect to~$c$,
we also have that $a_c \not \equiv \bar p$ if $c\geq\bar c$.
\end{proof}

We see now that, while $c^*$ is the supremum of the speeds $c$ such that 
$a_c \equiv \bar p$, it actually holds that $a_{c^*} \not \equiv \bar p$. 
This will be crucial for the construction of the~front.
\begin{lem}\label{lem:ac2}
The following properties are equivalent:
\begin{enumerate}[$(i)$]
\item $  c < c^*$,
\item $  a_c \equiv \bar p$, 
\item $ \exists n_0 \in \mathbb{N},  \ \exists z_0 >0, \  
\forall y\in[0,1]^N,\quad
  a_{c,n_0} (y,z_0) > \phi (y,-\infty).$
\end{enumerate}
In particular, in the case $c = c^*$, we have that 
for all $n\in\N$ and $z > 0$, there exists $y\in[0,1]^N$
such that $a_{c^*,n} (y,z)\leq\phi (y,-\infty)$.
\end{lem}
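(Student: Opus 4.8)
The plan is to prove the chain of equivalences $(i)\Leftrightarrow(ii)\Leftrightarrow(iii)$ in a loop, since the last assertion of the lemma is just the contrapositive of $(iii)\Rightarrow(i)$ applied at $c=c^*$.

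\textbf{Step 1: $(i)\Rightarrow(ii)$.} By the definition of $c^*$ as a supremum, if $c<c^*$ there exists $c'$ with $c<c'\leq c^*$ arbitrarily close to $c^*$ such that $a_{c'}\equiv\bar p$; more carefully, $c<c^*=\sup\{c:a_c\equiv\bar p\}$ gives some $c'\in(c,c^*]$ with $a_{c'}\equiv\bar p$, but we also need to handle the endpoint. Actually the clean route is: the set $\{c:a_c\equiv\bar p\}$ is, by the monotonicity of $a_c$ in $c$ from Lemma~\ref{lem:a2}, an interval unbounded below (Lemma~\ref{lem:ac1}$(i)$), so it contains $(-\infty,c^*)$. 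Hence any $c<c^*$ satisfies $a_c\equiv\bar p$. This is essentially immediate from the structure already established.

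\textbf{Step 2: $(ii)\Rightarrow(iii)$.} Suppose $a_c\equiv\bar p$. Since $a_{c,n}(y,z)\uparrow a_c(y,z)=\bar p(y)$ pointwise as $n\to+\infty$, and the convergence is locally uniform in $y$ (Lemma~\ref{lem:a2}), for any fixed $z_0>0$ we have $a_{c,n}(y,z_0)\to\bar p(y)$ uniformly in $y\in[0,1]^N$. Since $\phi(y,-\infty)<\bar p(y)$ uniformly (condition~\eqref{ass:phi}), there is $\eta>0$ with $\bar p(y)-\phi(y,-\infty)\geq\eta$ for all $y$; choosing $n_0$ large so that $\sup_{y\in[0,1]^N}(\bar p(y)-a_{c,n_0}(y,z_0))<\eta$ gives $a_{c,n_0}(y,z_0)>\phi(y,-\infty)$ for all $y\in[0,1]^N$, which is $(iii)$.

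\textbf{Step 3: $(iii)\Rightarrow(i)$, the main obstacle.} This is the real content, and it is where Lemma~\ref{lem:c*open} in spirit (but now in the $N$-dimensional periodic setting) must be reproduced. Assume $(iii)$: $a_{c,n_0}(y,z_0)>\phi(y,-\infty)$ for all $y\in[0,1]^N$, with $z_0>0$. First I would upgrade this to $a_{c,n_0}>\phi$ everywhere: since $a_{c,n_0}$ and $\phi$ are nonincreasing in $z$, $\phi$ is supported in $z\leq0$, $\phi(\cdot,-\infty)$ is periodic, and $a_{c,n_0}(y,z_0)>\phi(y,-\infty)\geq\phi(y,z)$ for $z\leq0\leq z_0$ — combined with $a_{c,n_0}>0=\phi$ for $z\geq0$ — we get $a_{c,n_0}>\phi$ on all of $\R^{N+1}$ (using periodicity in $y$ to pass from $[0,1]^N$ to $\R^N$). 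Consequently $a_{c,n_0+m}=(\mathcal{F}_{e,c})^m[a_{c,n_0}]$ for all $m$, because the max with $\phi$ is never active once $a_{c,n_0}>\phi$ and $\mathcal{F}_{e,c}$ preserves this (the monotone image of something $>\phi$, which converges to $\bar p>\phi(\cdot,-\infty)$ at $z=-\infty$, stays $>\phi$). Next, I would use the relation~\eqref{acc'1}, $a_{c,n}(\cdot,\cdot+n(c'-c))\leq a_{c',n}$ for $c<c'$, to transfer strictness: for $c'$ slightly larger than $c$, $a_{c',n_0}(y,z)\geq a_{c,n_0}(y,z+n_0(c'-c))$, and since $a_{c,n_0}>\phi$ with a gap that survives a small shift in $z$ (again by uniform continuity and the uniform gap $\eta$ near $z=z_0$, plus the support structure of $\phi$), we get $a_{c',n_0}>\phi$ as well for all $c'$ in a one-sided neighbourhood of $c$. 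Then, exactly as in Lemma~\ref{lem:c*open}, iterate: $a_{c',n_0+m}=(\mathcal{F}_{e,c'})^m[a_{c',n_0}]\geq$ (by \eqref{acc'1} or the analog of \eqref{c1c21}) a shift of $(\mathcal{F}_{e,c})^m[a_{c,n_0}]=a_{c,n_0+m}$; letting $m\to+\infty$ and using that $a_{c,n_0+m}(y,-\infty)\to\bar p(y)$ uniformly (Lemma~\ref{lem:ac1}, since we are in the regime where this holds because $a_{c,n}\geq(\mathcal F_{e,c})^n[\phi]$ and $\phi(\cdot,-\infty)$ is in the basin of $\bar p$) — actually more directly, since $a_c\equiv\bar p$ whenever $(iii)$ holds by the argument feeding into Step~2's converse, one concludes $a_{c'}\equiv\bar p$ for $c'$ in a neighbourhood of $c$, hence $c<c^*$. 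The delicate point throughout is the passage from ``strict above $\phi$ at the distinguished section $z_0$'' to ``strict above $\phi$ globally and stably under small perturbations of $c$'', which hinges on the one-sided support of $\phi$, its uniform continuity, and the uniform gap $\eta=\inf_y(\bar p-\phi(\cdot,-\infty))>0$; I expect the bookkeeping of these monotonicity-and-support arguments, rather than any deep estimate, to be the main obstacle.

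Finally, the concluding assertion of the lemma is immediate: taking $c=c^*$, if there were some $n\in\N$, $z>0$ with $a_{c^*,n}(y,z)>\phi(y,-\infty)$ for all $y\in[0,1]^N$, then $(iii)$ would hold at $c=c^*$, forcing $c^*<c^*$ by $(iii)\Rightarrow(i)$, a contradiction. Hence for every $n\in\N$ and $z>0$ there exists $y\in[0,1]^N$ with $a_{c^*,n}(y,z)\leq\phi(y,-\infty)$.
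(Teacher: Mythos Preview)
Your Steps~1 and~2 are correct and essentially match the paper. The genuine gap is in Step~3: you never establish $(iii)\Rightarrow(ii)$. After correctly upgrading $(iii)$ to $a_{c,n_0}>\phi$ everywhere and extending this to $a_{c',n_0}>\phi$ for $c'$ slightly above $c$, your attempt to conclude $a_{c'}\equiv\bar p$ breaks down. The inequality you invoke, $a_{c',n_0+m}(y,z)\geq a_{c,n_0+m}(y,z+(n_0+m)(c'-c))$, sends the second argument to $+\infty$ as $m\to+\infty$ (since $c'>c$), so the limit $a_{c,n_0+m}(\cdot,-\infty)\to\bar p$ is of no use here. You then write ``since $a_c\equiv\bar p$ whenever $(iii)$ holds by the argument feeding into Step~2's converse'', which is precisely the unproved implication $(iii)\Rightarrow(ii)$; the reasoning is circular at this point.

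The paper fills this gap with a self-comparison trick that you are missing entirely: from $a_{c,n}(\cdot,\cdot+z_0)>\phi$ for all $n\geq n_0$, one obtains by induction $a_{c,n_0+m}(\cdot,\cdot+z_0)\geq a_{c,m}$, whence $a_c(\cdot,\cdot+z_0)\geq a_c$. Together with the monotonicity in $z$ and $z_0>0$ this forces $a_c$ to be $z$-independent, hence identically $\bar p$ by Lemma~\ref{lem:ac1}. The paper then shows openness in $c$ of condition~$(iii)$ and concludes. Your Lemma~\ref{lem:c*open} route \emph{can} be salvaged, but the orientation must be reversed: once you know $a_{c',n_0}>\phi$ for some fixed $c'>c$, the $N$-dimensional analog of Lemma~\ref{lem:c*open} (going \emph{down} from $c'$) gives $a_{c''}\equiv\bar p$ for all $c''<c'$ --- in that direction the shift does go to $-\infty$ --- and picking any $c''\in(c,c')$ yields $c^*\geq c''>c$.
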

\begin{proof}
By definition of $c^*$ and monotonicity of $a_c$ with respect to $c$,
we already know that~$(i)$ implies $(ii)$. 
We also immediately see that $(ii)$ implies $(iii)$, using the fact that
$a_{c,n}(y,z)$ is nonincreasing in $z$ and
$a_{c,n}(y,z+y\cdot e)\to a_c(y,z+y\cdot e)$ as $n\to+\infty$
uniformly with respect to $y\in[0,1]^N$ (see Lemma~\ref{lem:a2}).

It remains to prove that $(iii)$ implies $(i)$. We assume that $(iii)$ holds 
and we start by showing~$(ii)$, 
which will serve as an intermediate step. 
Thanks to the monotonicity with respect to~$z$ and the fact that 
$a_{c,n_0}>0$ and $\phi (\cdot ,z) =0$ for $z \geq 0$, 
we get
$$\forall n \geq n_0, \ \forall (y,z) \in \mathbb{R}^{N+1},\quad
a_{c,n} (y, z + z_0) > \phi (y,z).$$
Since the operator $\mathcal{F}_{e,c}$ is order preserving, we also get that
$$\forall (y,z) \in \mathbb{R}^{N+1},\quad
a_{c,n_0 +1} (y,z+ z_0)\geq\mathcal{F}_{e,c} [a_{c,n_0}](y,z+ z_0)
 \geq \mathcal{F}_{e,c} [\phi] (y,z) .$$
It follows from the two inequalities above that
$$\forall (y,z) \in \mathbb{R}^{N+1},\quad
a_{c,n_0 +1} (y,z+ z_0) \geq a_{c,1} (y,z).$$
A straightforward induction leads to
$$\forall m \geq 0,\
\forall (y,z) \in \mathbb{R}^{N+1},\quad
a_{c,n_0 +m} (y,z+z_0) \geq a_{c,m} (y,z).$$
Passing to the limit $m \to +\infty$ on both sides, we infer that
$$\forall (y,z) \in \mathbb{R}^{N+1},\quad
a_{c} (y,z+z_0) \geq a_c (y,z).$$
Recalling that $z_0 >0$ and that $a_c$ is nonincreasing with respect to $z$, 
we find that $a_c (y,z) = a_c (y)$ does not depend on $z$. Since we know by Lemma~\ref{lem:ac1} that $a_c (\cdot,-\infty) \equiv \bar p(\cdot)$, we conclude that $a_c \equiv \bar p$. We have shown that $(iii)$ implies $(ii)$.

Next we show that the set of values of $c$ such that $(iii)$ holds is open. Using \eqref{eq:def_mapping1}, 
it is readily seen by iteration that, for any fixed $n \in \mathbb{N}$, the function $a_{c,n}$ inherits from~$\phi$ the continuity with respect to the variable $(y,z)$ (though 
this is not uniform with respect to $n\in\N$).
From this, by another iterative argument and \eqref{c1c21}, one deduces that
$a_{c,n}(y,z)\to a_{c_0,n}(y,z)$ locally uniformly in $(y,z)$ as $c\to c_0$,
for every $n\in\N$. Openness follows.

We are now in the position to conclude the proof of Lemma~\ref{lem:ac2}. 
Assume that $(iii)$ holds for some $c$. From what we have just proved, we know that $(iii)$ holds true for some $c' >c$, and thus $(ii)$ holds for $c'$ too. By the definition of $c^*$, we have that $c^* \geq c' > c$, that is, $(i$) holds.
\end{proof}

Before proceeding we have to check that $c^*$ is intrinsic to \eqref{eq:parabolic} and does not 
depend on~$\phi$. This will be useful later on, when going back to the continuous case and 
more specifically to check that the speed 
of the discrete front we shall obtain does not depend on the choice of the time step
of the discretization.
\begin{lem}\label{lem:indep1}
The speed $c^*$ does not depend on the choice of $\phi$ satisfying the 
properties~\eqref{ass:phi}.
\end{lem}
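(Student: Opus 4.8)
The plan is to show that for any two initial functions $\phi$ and $\tilde\phi$ satisfying \eqref{ass:phi}, the resulting values $c^*$ and $\tilde c^*$ coincide, by proving each is bounded by the other. By symmetry it suffices to prove $c^*\leq\tilde c^*$, i.e., that whenever $c<c^*$ one also has $c<\tilde c^*$. The key tool is Lemma~\ref{lem:ac2}, which gives a characterization of the inequality $c<c^*$ purely in terms of the iterates overtaking $\phi(\cdot,-\infty)$ at one point after finitely many steps; so the task is to transfer this ``overtaking'' property from the $\phi$-sequence to the $\tilde\phi$-sequence.

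First I would fix $c<c^*$. By Lemma~\ref{lem:ac2}, $a_c\equiv\bar p$, and moreover we may pick $n_0\in\N$ and $z_0>0$ so that $a_{c,n_0}(y,z_0)>\phi(y,-\infty)$ for all $y\in[0,1]^N$. Since $a_{c,n_0}$ is nonincreasing in $z$ and $a_c\equiv\bar p$, actually for every $Z>0$ we can find (by Lemma~\ref{lem:a2}, using the locally uniform convergence in the crossed variable and a compactness/periodicity argument on the cell) an index $n$ with $a_{c,n}(y,Z)$ as close as we like to $\bar p(y)$, uniformly in $y\in[0,1]^N$; in particular $a_{c,n}(y,Z)>\tilde\phi(y,-\infty)$ for all $y\in[0,1]^N$ once $n$ is large. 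The point is that $\tilde\phi(y,-\infty)<\bar p(y)$ uniformly, so being sufficiently $C^0$-close to $\bar p$ on the cell $[0,1]^N$ suffices. Now I would compare the $\tilde\phi$-sequence with the $\phi$-sequence: since $\tilde\phi$ is bounded, compactly ``supported above'' (i.e. $\tilde\phi(\cdot,z)=0$ for $z\geq0$) and nonincreasing, while $a_{c,n}(\cdot,z)$ converges to $a_c\equiv\bar p$ as $n\to+\infty$ uniformly on $[0,1]^N\times(-\infty,Z]$ for each fixed $Z$, there is some $n_1$ with $a_{c,n_1}\geq\tilde\phi$ on all of $\R^{N+1}$ (for $z\geq0$ both sides satisfy $\tilde\phi=0\leq a_{c,n_1}$; for $z\leq0$ use the uniform convergence together with $\tilde\phi\leq\max\bar p$ and its monotonicity, together with the quantitative decay \eqref{acn<1} controlling the tail as $z\to-\infty$... actually simpler: $\tilde\phi\leq\bar p$ everywhere and $a_{c,n_1}$ is uniformly close to $\bar p$ on $[0,1]^N\times\R$ by periodicity, so $a_{c,n_1}>\tilde\phi-\eps$; combined with $\tilde\phi(\cdot,-\infty)<\bar p$ we get domination on a half-line, and on $z\geq0$ it is trivial, leaving a compact $z$-range to handle by choosing $n_1$ large).

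Once $a_{c,n_1}\geq\tilde\phi$ holds, I would run the monotonicity of $\mathcal{F}_{e,c}$ together with the defining recursion for $\tilde a_{c,n}$ (the sequence built from $\tilde\phi$): from $\tilde a_{c,0}=\tilde\phi\leq a_{c,n_1}$ and $a_{c,n_1+1}\geq\mathcal{F}_{e,c}[a_{c,n_1}]\geq\mathcal{F}_{e,c}[\tilde\phi]$, combined with $a_{c,n_1+1}\geq\phi$ hence (after enlarging $n_1$ so that $a_{c,n_1}\geq\phi$, which holds for all $n\geq1$ anyway) $a_{c,n_1+1}\geq\max\{\phi,\mathcal{F}_{e,c}[\tilde\phi]\}$ — but I want $\geq\max\{\tilde\phi,\mathcal{F}_{e,c}[\tilde\phi]\}=\tilde a_{c,1}$, which follows since $a_{c,n_1+1}\geq\tilde\phi$ as $a_{c,n}$ is nondecreasing in $n$. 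Iterating, $a_{c,n_1+m}\geq\tilde a_{c,m}$ for all $m$. Taking $m\to+\infty$ gives $\bar p\equiv a_c\geq\tilde a_c$, hence $\tilde a_c\equiv\bar p$ since $\tilde a_c\leq\bar p$ always. By Lemma~\ref{lem:ac2} applied to the $\tilde\phi$-sequence (equivalence of $(i)$ and $(ii)$), this yields $c<\tilde c^*$ — wait, $(ii)$ is $\tilde a_c\equiv\bar p$ and $(i)$ is $c<\tilde c^*$, and $(ii)\Rightarrow(i)$ only passes through $(iii)$; but $(ii)\Rightarrow(iii)$ is one of the established implications, and $(iii)\Rightarrow(i)$ is the content of the lemma, so indeed $c<\tilde c^*$. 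Since $c<c^*$ was arbitrary, $c^*\leq\tilde c^*$, and exchanging roles gives equality.

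The main obstacle is the very first reduction: arranging an index $n_1$ with $a_{c,n_1}\geq\tilde\phi$ everywhere on $\R^{N+1}$, because the convergence $a_{c,n}\to a_c\equiv\bar p$ is only locally uniform in the crossed variable $y\cdot e+z$ and, crucially, not uniform as $z\to-\infty$ in a way that is automatic — here I would lean on the periodicity in $y$ (so uniformity on $[0,1]^N$ transfers to all of $\R^N$) and on the fact that $\tilde\phi$ is globally bounded by $\max\bar p$, is $0$ for $z\geq0$, and has the uniform limit $\tilde\phi(\cdot,-\infty)<\bar p$; splitting the range of $z$ into $z\geq0$ (trivial), a large negative half-line $z\leq -Z_1$ (where $a_{c,n}$ is already close to $\bar p$ from one application of the subsolution argument in Lemma~\ref{lem:ac1}, or simply $a_{c,n}\geq\phi\geq$ something, combined with $\tilde\phi$ near $\tilde\phi(\cdot,-\infty)$), and a compact window $-Z_1\leq z\leq0$ (where locally uniform convergence to $\bar p$ does the job). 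Everything else is a routine chaining of the monotonicity and order-preserving properties already in hand.
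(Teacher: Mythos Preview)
Your approach is correct and follows the same overall strategy as the paper: show that an iterate of the $\phi$-sequence eventually dominates $\tilde\phi$, then propagate this by monotonicity of $\mathcal{F}_{e,c}$ to get $a_{c,n_1+m}\geq\tilde a_{c,m}$ and conclude via Lemma~\ref{lem:ac2}. The difference lies in how the initial domination is obtained. You restrict to $c<c^*$, use the full information $a_c\equiv\bar p$, and arrange $a_{c,n_1}\geq\tilde\phi$ on all of $\R^{N+1}$ with no shift. The paper instead works for arbitrary $c\in\R$, uses only the weaker fact $a_c(\cdot,-\infty)\equiv\bar p$ from Lemma~\ref{lem:ac1}, and introduces a $z$-shift: one finds $\bar z<0$ and $\bar n$ with $a_{c,\bar n}(\cdot,\cdot-\sqrt N+\bar z)>\hat\phi$, and then iterates the shifted inequality. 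This shift device bypasses entirely the uniform-in-$z$ convergence issue that you flag as the ``main obstacle''.

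Your discussion of that obstacle is more convoluted than necessary. A clean way to carry out your version: since $a_c\equiv\bar p$, Lemma~\ref{lem:a2} gives $a_{c,n}(y,\sqrt N+y\cdot e)\to\bar p(y)$ uniformly for $y\in[0,1]^N$; because $|y\cdot e|\leq\sqrt N$ on the cell, monotonicity in $z$ yields $a_{c,n}(y,0)\geq a_{c,n}(y,\sqrt N+y\cdot e)$, so for $n$ large $a_{c,n}(y,0)>\tilde\phi(y,-\infty)\geq\tilde\phi(y,z)$ for all $z\leq0$ and $y\in[0,1]^N$, hence by periodicity for all $y$. The range $z\geq0$ is trivial. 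No three-region splitting is needed.
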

\begin{proof}
Consider two admissible functions $\phi$ and $\hat{\phi}$ for the conditions~\eqref{ass:phi}.
Let $a_{c,n}$, $\hat{a}_{c,n}$ and $c^*$, $\hat{c}^*$ 
denote the functions and constants
constructed as above, starting from~$\phi$, $\hat{\phi}$ respectively.
Take an arbitrary $c\in\R$.
Using the first part of
Lemma~\ref{lem:ac1} and the fact that $a_{c,n} (y,z + y\cdot e) \to a_c (y , z + y \cdot e)$ 
locally uniformly in $y$ as $n \to +\infty$, 
we can find $\bar z <0$ and $\bar n\in\N$ such that
$$\inf_{y\in[0,1]^N}\left(a_{c,\bar n}(y,\bar z+y\cdot e) - \hat\phi(y,-\infty)\right)>0.$$
Because $|y\cdot e|\leq \sqrt{N}$ if $y\in[0,1]^N$, 
one readily deduces that $a_{c,\bar n} (y,z-\sqrt{N} + \bar z) > \hat \phi (y,z)$ for all $(y,z) \in \R^{N+1}$, whence $a_{c,n} (\cdot, \cdot - \sqrt{N} + \bar z ) > \hat \phi$ for all $n \geq \bar n$ by the monotonicity in~$n$.
It follows that
$$a_{c,\bar n+1}(\cdot,\cdot - \sqrt{N} +\bar z)\geq \max\{\hat\phi,\F{c}[a_{c,\bar n}](\cdot,\cdot - \sqrt{N} +\bar z)\}
\geq \max\{\hat\phi,\F{c}[\hat\phi]\}={\hat a}_{c,1}.$$
By iteration we eventually infer that $a_{c,\bar n+m}(\cdot,\cdot - \sqrt{N}+\bar z)\geq\hat a_{c,m}$
for all $m\in\N$. This implies that $c^*\geq \hat c^*$.
Switching the roles of $\phi$ and $\hat\phi$ we get the reverse inequality.
\end{proof}

\section{A discrete travelling front with speed $c^*$}
\label{sec:discreteTF}

Under the Assumption~\ref{ass:mix}, 
we have constructed in the previous section a candidate speed~$c^*$ for the existence
of a pulsating travelling front. 
In the current one we show that there exists a discrete travelling front in the direction $e$ with speed~$c^*$ connecting~$\bar p$ to some stable periodic steady state
(in the sense of Definition~\ref{def:discrete_front}). 
To derive the stability of the latter we will make use of
the additional Assumption~\ref{ass:speeds}.
We recall that in order to define the minimal 
speeds~$\overline{c}_{q}$ and~$\underline{c}_{q}$ appearing in
Assumption~\ref{ass:speeds},
we have shown after the statement of Theorem~\ref{thm:mono} that the hypothesis
there is guaranteed by Assumption~\ref{ass:multi}. However, 
this was achieved without using the linear stability hypothesis in Assumption~\ref{ass:multi}
and therefore~$\overline{c}_{q}$ and~$\underline{c}_{q}$ are well 
defined under Assumption~\ref{ass:mix}~too.

The strategy is as follows. For $c<c^*$, Lemma~\ref{lem:ac2} implies
that $a_{c,n} > \phi$ for $ n $ sufficiently large. We deduce that the nondecreasing sequence 
$(a_{c,n})_{n\in\N }$ is eventually given by the recursion 
$a_{c,n} = \mathcal{F}_{e,c} [a_{c,n-1}]$.
Roughly speaking, we have constructed a solution of \eqref{eq:parabolic} 
which is non-decreasing with respect to $1$-time steps in the frame 
moving with speed $c$ in the direction $e$. We now want to pass to the limit as $c \nearrow c^*$ in order to get a fixed point for $\mathcal{F}_{e,c^*}$ and, ultimately, a pulsating travelling front in the direction~$e$. To achieve this, we shall need to capture such solutions
at a suitable time step, and suitably translated.
\begin{rmk}
The equivalent argument in the continuous case of what we 
are doing here is to construct a family of functions $U_c$ such that $U_c (x, x\cdot e - ct)$ is a subsolution of \eqref{eq:parabolic}, and to use this family and a limit argument to find a pulsating front. Notice that an inherent difficulty in such an argument is that a subsolution does not satisfy regularity estimates in general. We face a similar difficulty in the discrete framework.
\end{rmk}

\subsection{Choosing a diagonal sequence as $c \nearrow  c^*$}\label{sec:subsequence}

Consider the function $\phi$ satisfying~\eqref{ass:phi} 
from which we initialize the construction of the sequence~$(a_{c,n})_{n\in\N}$.
	
The first step in order to pass to the limit as $c \nearrow c^*$ is to capture the sequence at a suitable iteration, and roughly at the point where it `crosses' the limit~$\phi(\cdot,-\infty)$, which, we recall, lies in the basin of attraction
of $\bar p$.
\begin{lem}\label{lem:n(c)}
For $c < c^*$, there exists $n(c) \in \N$ such that, for all $n \geq n(c)$, the quantity
$$z_{c,n}:=\sup \{z\ :\ a_{c,n} (y,z+y\cdot e)> \phi(y,-\infty)\ \text{ for all }y\in[0,1]^N\}$$
is a  well-defined real number. In addition, there holds
\Fi{eq:nc1}
\forall m\geq 0,\quad
a_{c,n(c)+m}=(\mathcal{F}_{e,c})^m [a_{c,n(c)}]\geq a_{c,n(c)+m-1},
\Ff
\Fi{eq:zcn1}
\forall 0\leq  m\leq 1/\sqrt{c^* - c},\quad
0 \leq z_{c,n(c) + m} - z_{c,n(c)} \leq 2 \sqrt{c^* - c}.
\Ff
\end{lem}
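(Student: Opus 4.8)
The plan is to first establish that $z_{c,n}$ is well-defined for $n$ large, then derive \eqref{eq:nc1} from Lemma~\ref{lem:ac2}, and finally obtain \eqref{eq:zcn1} by combining a monotonicity-in-$n$ estimate for $z_{c,n}$ with the linear growth bound \eqref{claim1}-type control, whose $N$-dimensional analogue I will need to prove here. Throughout, fix $c<c^*$; by Lemma~\ref{lem:ac2}, $a_c\equiv\bar p$, hence the equivalence $(i)\Leftrightarrow(iii)$ there gives some $n_0$, $z_0>0$ with $a_{c,n_0}(y,z_0)>\phi(y,-\infty)$ for all $y\in[0,1]^N$. Since $a_{c,n}$ is nondecreasing in $n$ and nonincreasing in $z$, the set defining $z_{c,n}$ is nonempty for $n\geq n_0$ and contains $(-\infty,z_0]$. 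It is bounded above because $a_{c,n}(y,+\infty)\equiv0$ uniformly in $y$ (Lemma~\ref{lem:a1}) while $\phi(y,-\infty)\geq\phi(\cdot,-\infty)$ is bounded below by a positive constant (it lies in the basin of attraction of $\bar p$, so is $\not\equiv0$, and being continuous and periodic it has a positive minimum). Hence $z_{c,n}\in\R$ for $n\geq n_0$. Using Lemma~\ref{lem:ac2}$(iii)$ again, enlarging $n_0$ if needed we also get $a_{c,n}>\phi$ for all $n\geq n_0$ (this is exactly the intermediate inequality $a_{c,n}(y,z+z_0)>\phi(y,z)$ in the proof of that lemma, which with monotonicity upgrades to $a_{c,n}>\phi$ once $a_{c,n_0}(y,z_0)>\phi(y,-\infty)$ forces $a_{c,n_0}(y,0^-)>0=\phi$ everywhere the comparison matters; more directly, $\phi$ vanishes for $z\geq0$ and $a_{c,n}>0$, while for $z<0$ one uses $a_{c,n}(y,z)\geq a_{c,n}(y,z_0+\text{shift})$ is not quite it — cleaner: by definition $a_{c,n+1}\geq\mathcal F_{e,c}[a_{c,n}]$ and iterating from $a_{c,n_0}$ one checks $a_{c,n}$ eventually exceeds $\phi$ since $a_c\equiv\bar p>\phi$ and the convergence $a_{c,n}\to a_c$ is locally uniform in $y$ for each fixed $z$). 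Call the resulting index $n(c)$; for $n\geq n(c)$ we have $a_{c,n}>\phi$, so $\max\{\phi,\mathcal F_{e,c}[a_{c,n}]\}=\mathcal F_{e,c}[a_{c,n}]$, which gives \eqref{eq:nc1} together with the monotonicity of the sequence.

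For the upper bound in \eqref{eq:zcn1}, the key ingredient is the $N$-dimensional counterpart of \eqref{claim1}, namely
$$\limsup_{n\to\infty}\frac{z_{c,n}}{n}\leq c^*-c.$$
I would prove this exactly as in the 1-D case: from \eqref{acc'1} with $c'=c^*$,
$$\forall n\in\N,\quad a_{c,n}(\cdot,\cdot+n(c^*-c))\leq a_{c^*,n},$$
and since $a_{c^*}\not\equiv\bar p$ (Lemma~\ref{lem:ac2}, taking $c=c^*$), one shows $\inf a_{c^*}$ is small enough that there exist $Z_{*}\in\R$ and a point $\hat y\in[0,1]^N$ — or rather, using the uniform-in-$y$ control and periodicity, a level below $\phi(\cdot,-\infty)$ reached by $a_{c^*,n}$ for all $n$ — so that $z_{c,n}\leq Z_{*}+n(c^*-c)$ for all $n$. (Here one needs the analogue of "$\inf a_{c^*}\leq\theta$"; in the general setting this is replaced by the statement that $a_{c^*,n}$ cannot stay everywhere above $\phi(\cdot,-\infty)$ for all $z>0$, which is precisely the last assertion of Lemma~\ref{lem:ac2}.) Given this linear bound, together with the obvious monotonicity $z_{c,n}\leq z_{c,n+1}$ (from $a_{c,n}\leq a_{c,n+1}$), a pigeonhole/averaging argument over blocks of length $\sim1/\sqrt{c^*-c}$ yields, after increasing $n(c)$ once more, that the increment $z_{c,n(c)+m}-z_{c,n(c)}$ over $0\leq m\leq1/\sqrt{c^*-c}$ is at most $2\sqrt{c^*-c}$: indeed if every block of that length had total increment exceeding $2\sqrt{c^*-c}$ starting from $n(c)$ on, the average slope would exceed $2\sqrt{c^*-c}\big/(1/\sqrt{c^*-c})=2(c^*-c)>c^*-c$, contradicting the limsup bound for $n(c)$ large. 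The lower bound $z_{c,n(c)+m}-z_{c,n(c)}\geq0$ is immediate from monotonicity in $n$.

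The main obstacle I anticipate is the $N$-dimensional replacement of the scalar ODE comparison used in 1-D to show $\inf a_{c^*}\leq\theta$. In the periodic PDE setting there is no single threshold $\theta$; instead one must argue that if $a_{c^*}$ stayed bounded below by (a constant just under) $\min\phi(\cdot,-\infty)$ on all of $\{z>0\}$, then since that constant lies in the basin of attraction of $\bar p$ and $a_{c^*}\geq\mathcal F_{e,c^*}[a_{c^*}]$, iterating $\mathcal F_{e,c^*}$ would push $a_{c^*}$ up to $\bar p$, contradicting $a_{c^*}\not\equiv\bar p$ — this is the content already extracted in Lemma~\ref{lem:ac2}, so really the work is just to package it as the linear-in-$n$ upper bound on $z_{c,n}$ uniformly in $c<c^*$. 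A secondary technical point is handling the shift by $y\cdot e$ (bounded by $\sqrt N$ on $[0,1]^N$) when translating between "for all $y$" statements and pointwise-in-$z$ statements, exactly as done in the proof of Lemma~\ref{lem:indep1}; this only costs additive constants and does not affect the slope estimates.
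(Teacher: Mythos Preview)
Your proposal is correct and essentially matches the paper's proof: both use Lemma~\ref{lem:ac2} for the well-definedness of $z_{c,n}$ and for \eqref{eq:nc1}, derive $\limsup_n z_{c,n}/n\leq c^*-c$ from \eqref{acc'1} combined with the last assertion of Lemma~\ref{lem:ac2}, and then extract \eqref{eq:zcn1} by a pigeonhole/averaging argument over a block of length $\lfloor 1/\sqrt{c^*-c}\rfloor$ together with the monotonicity $z_{c,n}\leq z_{c,n+1}$. Your anticipated ``main obstacle'' (the $N$-dimensional substitute for the ODE comparison) is, as you yourself observe, already packaged in Lemma~\ref{lem:ac2}, and the $y\cdot e$ shifts indeed only cost additive constants.
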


While property \eqref{eq:nc1} holds for any $c<c^*$ provided $n(c)$ is sufficiently large, 
the same is not true for \eqref{eq:zcn1}.
The latter will play a crucial role for getting a travelling front in the limit.
Loosely speaking, it guarantees that, as $c\nearrow c^*$, there exists an 
index~$n(c)$ starting from which the ``crossing point'' 
$z_{c,n}$ moves very little along an arbitrary large number of iterations.
\begin{proof}[Proof of Lemma \ref{lem:n(c)}]
Fix $c <c^*$. 
First of all, from the equivalence between~$(i)$ and~$(iii)$ in Lemma~\ref{lem:ac2}, we know that there exists $n(c)$ such that  $a_{c,n} > \phi$
for $ n \geq  n(c)$. We deduce that the nondecreasing sequence 
$(a_{c,n})_{n \geq  n(c)}$ is simply given by the recursion $a_{c,n} = \mathcal{F}_{e,c} [a_{c,n-1}]$, that is
property \eqref{eq:nc1}.
Now, Lemma~\ref{lem:a1} implies that the set 
$$\{z\ :\ a_{c,n} (y,z+y\cdot e)> \phi(y,-\infty)\
\text{ for all }y\in[0,1]^N\}$$ is either a left half-line or the empty set, while Lemmas~\ref{lem:a2}-\ref{lem:ac1}
show that it is nonempty for $n$ sufficiently large.
As a consequence,
up to increasing $ n(c)$ if need be, 
its supremum $z_{c,n}$ is well-defined and finite for~$n \geq n(c)$. 

It remains to prove \eqref{eq:zcn1}, for which we can assume that $c\geq c^*-1$.
We claim that
\Fi{eq:z/n}
\limsup_{n \to +\infty} \frac{z_{c,n}}{n} \leq c^* - c.
\Ff
Indeed by the definition of $z_{c,n}$ and \eqref{acc'1}, for $n\geq n(c)$ we get
\[\begin{split}
0 &< \min_{y\in[0,1]^N} \big(a_{c,n} (y,z_{c,n}-1+ y \cdot e) - \phi(y,-\infty)\big)\\
&\leq\min_{y\in[0,1]^N} \big(a_{c^*,n} (y,z_{c,n}+n(c-c^*) + y \cdot e -1 ) - \phi(y,-\infty)\big).
\end{split}\]
Hence if \eqref{eq:z/n} does not hold,
we would find a large $n$ contradicting the last statement of Lemma \ref{lem:ac2}.

Next, let $N(c)\geq1$ be the integer part of $1/\sqrt{c^* - c}$. 
Owing to \eqref{eq:z/n}, we can further increase $n(c)$ to ensure that
$$z_{c,n(c) + N(c)} - z_{c,n(c)} \leq 2 N(c) (c^* - c).$$
Moreover, we know that $z_{c,n+1} \geq z_{c,n} $ for all $c$ and $n$, due to the monotonicity of $a_{c,n}$ with respect to $n$. 
In particular, for any integer $0 \leq m \leq N(c)$, we also have that
$$0 \leq z_{c,n(c) + m} - z_{c,n(c)} \leq 2 N(c) (c^* - c),$$
from which we deduce \eqref{eq:zcn1}.
\end{proof}
In the next lemma, we state what we obtain when passing to the limit as $c \nearrow c^*$.
\begin{lem}\label{lem:limit1}
There exists a lower semicontinuous
function $a^*(y,z)$ satisfying the following properties:
\begin{enumerate}[$(i)$]   
   \item $a^* (y,z+y\cdot e)$ is uniformly continuous 
   in $y\in\R^N$, uniformly with respect to $z\in\R$;
   \item $a^*(y,z)$ is periodic in $y$ and nonincreasing in $z$;
   \item $(\mathcal{F}_{e,c^*})^n [a^*]$ is nondecreasing with respect to $n$;
   \item $\lim_{n \to+\infty}
   \Big( \max_{y \in [0,1]^N} \big(\bar p(y) -(\mathcal{F}_{e,c^*})^n [a^*] (y,y \cdot e)\big)\Big) > 0$;
   \item $(\mathcal{F}_{e,c^*})^n [a^*](\cdot,-\infty)\nearrow \bar p$ uniformly
   as $n\to+\infty$;
  \item $(\mathcal{F}_{e,c^*})^n [a^*] (\cdot, +\infty) \nearrow p$ uniformly
  as $n \to +\infty$, where 
  $0 \leq p < \bar p$ is a periodic steady state of~\eqref{eq:parabolic}.
\end{enumerate}
\end{lem}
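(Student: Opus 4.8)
The plan is to construct $a^*$ as a limit, along a well-chosen sequence $c_k\nearrow c^*$, of suitably translated iterates. For $c<c^*$ set $\hat a_{c,m}(y,z):=a_{c,n(c)+m}(y,z+z_{c,n(c)})$; since $\mathcal{F}_{e,c}$ commutes with shifts of the $z$-variable, \eqref{eq:nc1} gives $\hat a_{c,m}=(\mathcal{F}_{e,c})^m[\hat a_{c,0}]$, and by Lemmas~\ref{lem:a1} and~\ref{lem:n(c)} this sequence is nondecreasing in $m$, valued in $(0,\bar p)$, periodic and nonincreasing in $z$, and uniformly continuous in the crossed variable $y\mapsto(y,z+y\cdot e)$ with a modulus independent of $z,m,c$. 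Fixing a countable dense set $D:=\mathbb{Q}+\mathbb{Q}c^*$ (which is invariant under all the translations used below), by Ascoli--Arzelà and a diagonal extraction over $z\in D$ and $m\in\mathbb{N}$ I select $c_k\nearrow c^*$ so that $\hat a_{c_k,m}(y,z+y\cdot e)$ converges locally uniformly in $y$ for every $z\in D$ and $m\in\mathbb{N}$. Using monotonicity in $z$, these limits extend to all $z\in\mathbb{R}$ by taking right limits, yielding functions $\beta_m$ that are periodic and uniformly continuous in $y$, nonincreasing and right-continuous --- hence lower semicontinuous --- in $z$, and nondecreasing in $m$. I then set $a^*:=\beta_0$, so that $(i)$ and $(ii)$ hold and $0\le a^*\le\bar p$.

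Properties $(iii)$--$(vi)$ all rely on the identification $\beta_m=(\mathcal{F}_{e,c^*})^m[a^*]$, which I expect to be the main obstacle: it amounts to commuting the limit $c_k\nearrow c^*$ with the operators $\mathcal{F}_{e,c_k}$ despite $a^*$ having no regularity in $z$. The device is that, by \eqref{c1c21}, $\mathcal{F}_{e,c}=\tau_c\circ\mathcal{F}_{e,0}$ where $\tau_s$ denotes the shift $z\mapsto z+s$; these commute, so $(\mathcal{F}_{e,c})^m=\tau_{mc}\circ(\mathcal{F}_{e,0})^m$ and hence $(\mathcal{F}_{e,0})^m[\hat a_{c_k,0}]=\hat a_{c_k,m}(\cdot,\cdot-mc_k)$. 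Since $(\mathcal{F}_{e,0})^m$ carries no varying parameter, iterating the continuity property of Proposition~\ref{prop:Fec}$(iii)$ gives $(\mathcal{F}_{e,0})^m[\hat a_{c_k,0}](y,z+y\cdot e)\to(\mathcal{F}_{e,0})^m[a^*](y,z+y\cdot e)$ locally uniformly in $y$ for each fixed $z$. Comparing with the convergence of $\hat a_{c_k,m}$ at the shifted points $z-mc_k$, and squeezing by monotonicity in $z$ together with the right-continuity of $\beta_m$ as $c_k\nearrow c^*$, one obtains $\beta_m=\tau_{mc^*}(\mathcal{F}_{e,0})^m[a^*]=(\mathcal{F}_{e,c^*})^m[a^*]$. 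Then $(iii)$ is inherited from the monotonicity of $m\mapsto\hat a_{c_k,m}$.

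For $(iv)$ I use the fine estimate \eqref{eq:zcn1}: by the definition of $z_{c,n}$, for $z$ above the threshold $z_{c_k,n(c_k)+m}-z_{c_k,n(c_k)}$ there is some $y$ with $\hat a_{c_k,m}(y,z+y\cdot e)\le\phi(y,-\infty)$, and \eqref{eq:zcn1} forces this threshold to be at most $2\sqrt{c^*-c_k}\to 0$ once $k$ is large (for each fixed $m$). Hence for every rational $w>0$ and every $m$, letting $k\to\infty$ produces some $y\in[0,1]^N$ with $\beta_m(y,w+y\cdot e)\le\phi(y,-\infty)<\bar p(y)$. Writing $\beta_\infty:=\lim_m\beta_m$, the sets $\{y\in[0,1]^N:\beta_\infty(y,w+y\cdot e)\le\phi(y,-\infty)\}$ are closed, nonempty and nested in $w$, so intersecting them as $w\searrow 0$ and invoking the lower semicontinuity of $\beta_\infty$ yields $y^*$ with $\beta_\infty(y^*,y^*\cdot e)\le\phi(y^*,-\infty)<\bar p(y^*)$; since $\beta_n$ is nondecreasing in $n$, this gives $(iv)$.

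Finally, iterating \eqref{eq:def_mapping1} gives $(\mathcal{F}_{e,c^*})^n[a^*](y,z+y\cdot e-nc^*)=v(n,y;a^*(x,z+x\cdot e))$, so letting $z\to\mp\infty$ and using the continuity of the parabolic flow under monotone bounded convergence of initial data, $(\mathcal{F}_{e,c^*})^n[a^*](y,\mp\infty)=v(n,y;a^*(\cdot,\mp\infty))$. The translation built into the construction forces $a^*(\cdot,-\infty)\ge\phi(\cdot,-\infty)$ (because $\hat a_{c_k,0}(y,z+y\cdot e)>\phi(y,-\infty)$ for $z<0$), and since $\phi(\cdot,-\infty)$ lies in the basin of attraction of $\bar p$, comparison and Dini's theorem give $(\mathcal{F}_{e,c^*})^n[a^*](\cdot,-\infty)\nearrow\bar p$ uniformly, which is $(v)$. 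For $(vi)$, the sequence $p_n:=(\mathcal{F}_{e,c^*})^n[a^*](\cdot,+\infty)=v(n,\cdot;a^*(\cdot,+\infty))$ is nondecreasing, bounded by $\bar p$, and converges uniformly (Dini) to some $p\ge 0$ with $v(1,\cdot;p)=p$, hence a steady state by Proposition~\ref{prop:energy1}; moreover $p<\bar p$, since $p\equiv\bar p$ would force $\beta_n(y,z)$, squeezed between $p_n(y)$ and $\bar p(y)$, to converge to $\bar p$ for every $z$, contradicting $(iv)$.
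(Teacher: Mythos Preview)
Your proof is essentially correct and follows the same route as the paper: construct $a^*$ as a relaxed (right-limit) limit along a diagonal sequence $c_k\nearrow c^*$ of the translated iterates $\hat a_{c_k,0}$, then read off $(i)$--$(vi)$. The paper packages the relaxed-limit step as a separate lemma (Lemma~\ref{lem:diagonal}) and, for $(iii)$, proves only the inequality $\mathcal{F}_{e,c^*}[a^*]\ge a^*$ rather than your stronger identification $\beta_m=(\mathcal{F}_{e,c^*})^m[a^*]$; but the underlying computation is the same double limit pushed through the continuity of Proposition~\ref{prop:Fec}$(iii)$.

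One caveat: the sentence ``iterating the continuity property \dots gives $(\mathcal{F}_{e,0})^m[\hat a_{c_k,0}](y,z+y\cdot e)\to(\mathcal{F}_{e,0})^m[a^*](y,z+y\cdot e)$ for each fixed $z$'' is not literally justified. You only extracted convergence of $\hat a_{c_k,0}(\cdot,z+\cdot\,e)$ for $z\in D$, and even there the limit is $\tilde\beta_0$, which may differ from the right-regularized $\beta_0=a^*$ at jump points. What is true is that for $\zeta\in D$ one has $v(1,y;\tilde\beta_0(x,\zeta))=\lim_k \hat a_{c_k,1}(y,\zeta-c_k+y\cdot e)$, and then your squeezing argument (sandwiching $\zeta-c_k$ between rationals above and below $\zeta-c^*$, and using that for a nonincreasing function the left-limits at $s$ converge to the right-limit at $s_0$ as $s\downarrow s_0$) correctly yields the identification after the outer limit $\zeta\to w^+$. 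So the argument is sound, but the offending sentence should be weakened to ``for each $\zeta\in D$'' and the actual work deferred to the squeezing step, exactly as you do in the next sentence.
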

Thanks to our previous results, 
we know that the properties $(i)$-$(iii)$
 are fulfilled with $c^*$ and $a^*$ replaced respectively by any $c < c^*$ and $a_{c,n}$
 with $n$ sufficiently large. 
 In order to get $(iv)$-$(vi)$ we need to pass to the limit $c\nearrow c^*$ by picking 
 the $a_{c,n}$ at a suitable iteration~$n$. 
 The choice will be $n=n(c)$ given by Lemma \ref{lem:n(c)},
 which fulfils the key property \eqref{eq:zcn1}. 
When passing to the limit, we shall face the problem of the lack of regularity in the $z$-variable. 
This will be handled by considering the following relaxed notion of limit.

\begin{lem}\label{lem:diagonal}
	Let $(\alpha_n)_{n\in\N}$ be a bounded sequence of functions from $\R^N\times\R$ to $\R$ such that 
	$\alpha_n(y,z)$  is periodic in $y$ and nonincreasing in $z$, 
	and $\alpha_n(y,z+y\cdot e)$ is uniformly 
	continuous in $y\in\R^N$, uniformly with respect to $z$ and $n$. 
	Then there exists a subsequence $(\alpha_{n_k})_{k\in\N}$ such that the following double limit
	exists locally uniformly in~$y\in\R^N$:
	$$\beta(y,z):=\lim_{ \Q\ni \zeta \to z^+}
	\Big(\lim_{k\to+\infty}\alpha_{n_k}(y,\zeta+y\cdot e)\Big).$$
	
	Furthermore, $\beta(y,z)$ is uniformly continuous 
	in $y\in\R^N$ uniformly with respect to $z\in\R$.
	Finally, the function
	$\alpha^*(y,z):=\beta(y,z-y\cdot e)$ is periodic in $y$ and
	nonincreasing and lower semicontinuous in $z$.
\end{lem}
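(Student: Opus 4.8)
## Proof plan for Lemma~\ref{lem:diagonal}

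The plan is to build the subsequence by a Cantor diagonal argument over the rationals, using the monotonicity in $z$ to force convergence, and then to verify that the relaxed double limit inherits all the stated properties.

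\textbf{Step 1: extracting a subsequence converging on a dense set.} Fix an enumeration $(\zeta_i)_{i\in\N}$ of $\Q$. For each $i$, the sequence of functions $y\mapsto\alpha_n(y,\zeta_i+y\cdot e)$ is bounded and, by hypothesis, equicontinuous (uniform continuity in $y$, uniform in $z$ and $n$); by Arzel\`a--Ascoli and a diagonal extraction over $i$, I obtain a single subsequence $(\alpha_{n_k})_{k}$ along which, for \emph{every} $\zeta\in\Q$,
$$\gamma(y,\zeta):=\lim_{k\to+\infty}\alpha_{n_k}(y,\zeta+y\cdot e)$$
exists locally uniformly in $y$. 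Each $\gamma(\cdot,\zeta)$ is periodic in $y$, shares the same modulus of continuity, and $\zeta\mapsto\gamma(y,\zeta)$ is nonincreasing on $\Q$ since each $\alpha_{n_k}(y,\cdot+y\cdot e)$ is nonincreasing.

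\textbf{Step 2: defining $\beta$ and checking the double limit exists.} For $z\in\R$ set $\beta(y,z):=\lim_{\Q\ni\zeta\to z^+}\gamma(y,\zeta)$; this is well-defined because $\gamma(y,\cdot)$ is monotone and bounded on $\Q$, so the right limit exists (it equals $\sup_{\Q\ni\zeta>z}\gamma(y,\zeta)$). The convergence in $\zeta$ is locally uniform in $y$: monotonicity in $\zeta$ plus the common modulus of continuity in $y$ give, by a Dini-type argument, uniformity on compact $y$-sets. Hence the iterated limit defining $\beta$ exists locally uniformly in $y$, as claimed.

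\textbf{Step 3: properties of $\beta$ and of $\alpha^*$.} Uniform continuity of $\beta(\cdot,z)$ in $y$, uniform in $z$: for fixed $z$, $\beta(\cdot,z)$ is a locally uniform limit of the $\gamma(\cdot,\zeta)$, which all obey the same modulus $\omega$ of continuity; the estimate $|\gamma(y,\zeta)-\gamma(y',\zeta)|\le\omega(|y-y'|)$ passes to the limit, giving $|\beta(y,z)-\beta(y',z)|\le\omega(|y-y'|)$ independently of $z$. Periodicity of $\beta(\cdot,z)$ in $y$ is inherited at each stage. For $\alpha^*(y,z):=\beta(y,z-y\cdot e)$: periodicity in $y$ follows since $\beta(\cdot,z)$ is periodic in $y$ for each $z$ and the shift $z\mapsto z-y\cdot e$ is compatible with integer translations of $y$ (here $L\cdot e$ for $L\in\Z^N$ is absorbed exactly as in the proof of Proposition~\ref{prop:Fec}$(i)$). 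Monotonicity of $\alpha^*$ in $z$: for $z<z'$ one compares $\beta(y,z-y\cdot e)$ and $\beta(y,z'-y\cdot e)$ using that $\zeta\mapsto\gamma(y,\zeta)$ is nonincreasing on $\Q$ and that $\beta(y,\cdot)$, being a right limit of a nonincreasing function, is itself nonincreasing. Lower semicontinuity of $\alpha^*$ in $z$: since $\beta(y,z)=\sup_{\Q\ni\zeta>z}\gamma(y,\zeta)$ is a supremum over $\zeta>z$ of a nonincreasing family, it is \emph{right}-continuous; a nonincreasing right-continuous function of $z$ is lower semicontinuous, and composing with the continuous shift $z\mapsto z-y\cdot e$ preserves this.

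\textbf{Main obstacle.} The only genuinely delicate point is Step~2: ensuring that the relaxed double limit exists \emph{locally uniformly in $y$}, rather than merely pointwise. The subtlety is that the inner limit (in $k$) and the outer limit (in $\zeta\to z^+$) must be made compatible on compact $y$-sets simultaneously; this is where the hypothesis that the modulus of continuity in $y$ is uniform in both $z$ and $n$ is essential, allowing a Dini/equicontinuity argument to upgrade the monotone pointwise-in-$\zeta$ convergence to a locally uniform one. Everything else is a routine transfer of periodicity, monotonicity, and equicontinuity through limits.
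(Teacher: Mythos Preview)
Your Steps~1 and~2 are correct and match the paper's argument. The genuine gap is in Step~3, in the periodicity claim.

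You assert that ``periodicity of $\beta(\cdot,z)$ in $y$ is inherited at each stage'', and then deduce periodicity of $\alpha^*$ from this. Both parts of this reasoning are wrong. First, $\gamma(\cdot,\zeta)$ is \emph{not} periodic in~$y$: by periodicity of $\alpha_n$ one has
\[
\gamma(y+L,\zeta)=\lim_{k\to\infty}\alpha_{n_k}(y+L,\zeta+(y+L)\cdot e)
=\lim_{k\to\infty}\alpha_{n_k}\big(y,(\zeta+L\cdot e)+y\cdot e\big),
\]
but $\zeta+L\cdot e$ is in general \emph{irrational} (nothing is assumed on $e\in\mathbb{S}^{N-1}$), so this last limit was never shown to exist along your subsequence; hence $\beta(\cdot,z)$ need not be periodic. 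Second, even if $\beta(\cdot,z)$ were periodic, this would give $\alpha^*(y+L,z)=\beta(y,z-y\cdot e-L\cdot e)$, which is $\alpha^*(y,z-L\cdot e)$, not $\alpha^*(y,z)$; the ``compatibility of the shift'' you invoke simply does not hold. The reference to Proposition~\ref{prop:Fec}$(i)$ is not apposite: there the shift $L\cdot e$ appears in the \emph{argument of the initial datum}, which is evaluated at all real points, not only rational ones.

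This is not a cosmetic issue: periodicity of $\alpha^*$ is the only genuinely delicate conclusion of the lemma, precisely because the diagonal extraction lives on~$\Q$ while the periodicity shifts are by the typically irrational quantities~$L\cdot e$. The paper resolves this by a squeezing argument exploiting the monotonicity in~$z$: for rationals $\zeta,\zeta'$ with $\zeta'+L\cdot e>\zeta$, periodicity of $\alpha_n$ and monotonicity yield $\alpha_n(y,\zeta+y\cdot e)\geq\alpha_n(y+L,\zeta'+(y+L)\cdot e)$; passing to the limit in $k$ and then letting $\zeta\to z^+$, $\zeta'\to(z-L\cdot e)^+$ gives $\beta(y,z)\geq\beta(y+L,z-L\cdot e)$, i.e.\ $\alpha^*(y,\cdot)\geq\alpha^*(y+L,\cdot)$, and equality follows by replacing $L$ with $-L$. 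You should replace your periodicity paragraph with this argument.
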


\begin{proof}
	Using a diagonal method, we can find a subsequence 
	$\alpha_{n_k}(y,\zeta+y\cdot e)$ converging locally uniformly in $y\in\R$ to some function $\tilde\beta(y,\zeta)$ 
	for all $\zeta\in\Q$. The function $\tilde\beta(y,\zeta)$ is uniformly continuous in $y$
	uniformly with respect to $\zeta\in\Z$.
	We then define $\beta:\R^N\times\R\to\R$ by setting
	$$\beta(y, z) := \lim_{ \Q\ni \zeta \to z^+} \tilde\beta (y, \zeta).$$
	This limit exists thanks to the monotonicity with respect to $z$, and it is locally uniform with respect to $y$
	by equicontinuity. We point out 
	that $\beta\leq\tilde \beta$ on $\R^N\times\Q$, 
	but equality may fail. 	
	We also see that $\beta(y,z)$ is uniformly continuous 
	in $y\in\R^N$ uniformly with respect to $z\in\R$, and it is nonincreasing and 
	lower semicontinuous in $z$. 
	
	Next, we define $\alpha^*(y,z):=\beta(y,z-y\cdot e)$. 
	We need to show that $\alpha^*(y,z)$ is periodic in~$y$.
	Fix $(y,z)\in\R^N\times\R$ and $L \in \mathbb{Z}^N$.
	Then, using the periodicity of $\alpha_n$,
	for every $\zeta,\zeta'\in\Q$ satisfying 
	$\zeta< z$ and $\zeta'+L \cdot e > \zeta$, 
	we get
	$$\alpha_n(y, \zeta+ y \cdot e) \geq \alpha_n (y+L ,\zeta'+ (y+L) \cdot e).$$
	Passing to the limit along the subsequence $\alpha_{n_k}$ we deduce
	$$\tilde\beta (y, \zeta) \geq 
	\tilde\beta (y + L , \zeta').$$
	Now we let $\Q\ni\zeta \to z^+$ and $\Q\ni\zeta' \to( z - L \cdot e)^+$ and we derive
	$$\beta (y, z) \geq 
	\beta (y + L , z - L \cdot e).$$
	That is, $\alpha^*(y, z+y \cdot e) \geq \alpha^* (y+L,z+y \cdot e)$. 
	Because $y$ and $z$ are arbitrary, this means that $\alpha^*\geq \alpha^* (\cdot+L,\cdot)$
	for all $L\in\Z^N$, i.e., $\alpha^*$ is periodic in its first variable.
\end{proof}

\begin{proof}[Proof of Lemma~\ref{lem:limit1}]
Consider the family of functions $(a_{c,n(c)}(y,z+z_{c,n(c)}))_{c<c^*}$, with $n(c)$, $z_{c,n(c)}$ given by 
	Lemma \ref{lem:n(c)}. From Lemma~\ref{lem:a1}, we know that this family is uniformly bounded by 0 and $\max\bar p$, and that any element $a_{c,n(c)}$ is periodic in the first variable and nonincreasing in the second one. 
	Moreover, the functions 
$a_{c,n(c)}(y,z+y\cdot e)$ are uniformly continuous in $y\in\R^N$,
uniformly with respect to $z\in\R$ and $c \in \mathbb{R}$, due to Lemma~\ref{lem:a1}.
In particular, any sequence extracted from this family fulfils the hypotheses of Lemma~\ref{lem:diagonal}.
	Then, there exists a  sequence $c^k\nearrow c^*$ such that
	the following limits exist locally uniformly in $y\in\R^N$:
	\Fi{def:a*}
	a^*(y,z+y\cdot e):=\lim_{ \Q\ni \zeta \to z^+}\Big(\lim_{k\to+\infty}
	a_{c^k,n(c^k)}(y,\zeta+z_{c^k,n(c^k)}+y\cdot e)\Big).
	\Ff
	We further know that the function $a^*$ satisfies the desired properties $(i)$-$(ii)$.
	The definition of $z_{c,n(c)}$ translates into the following normalization conditions:
	\Fi{atilde>}
	\forall z<0,\quad
        \min_{y\in[0,1]^N} \big(a^*(y,z+y\cdot e) - \phi(y,-\infty)\big) \geq 0,
        \Ff
	\Fi{atilde<}
        \min_{y\in[0,1]^N}\big(a^*(y,y\cdot e) -\phi(y,-\infty)\big) \leq 0,
        \Ff
	where we have used the monotonicity in $z$ and for the second one also
	the locally uniform convergence with respect to~$y$. 
		
	Let us check property $(iii)$. Using the continuity property of
	Proposition \ref{prop:Fec} together with \eqref{c1c21} we obtain 
	\[\begin{split}
	\mathcal{F}_{e,c^*} [a^*] (y, z + y \cdot e - c^*) &=
	\lim_{ \Q\ni \zeta \to (z-c^*)^+}\Big(\lim_{k\to+\infty}
	\mathcal{F}_{e,c^*} [a_{c^k,n(c^k)}](y,\zeta+z_{c^k,n(c^k)}+y\cdot e)\Big)\\
	&=\lim_{ \Q\ni \zeta \to (z-c^*)^+}\Big(\lim_{k\to+\infty}
	\mathcal{F}_{e,c^k} [a_{c^k,n(c^k)}](y,\zeta+z_{c^k,n(c^k)}+y\cdot e+c^*-c^k)\Big).
	\end{split}\]
	We now use property \eqref{eq:nc1} to deduce that the latter term is larger
	than or equal to
	\[
	\limsup_{ \Q\ni \zeta \to (z-c^*)^+}\Big(\limsup_{k\to+\infty}
	 a_{c^k,n(c^k)}(y,\zeta+z_{c^k,n(c^k)}+y\cdot e+c^*-c^k)\Big),\]
	which, in turn, is larger than or equal to
	\[\lim_{k\to+\infty}
	a_{c^k,n(c^k)}(y,\zeta'+z_{c^k,n(c^k)}+y\cdot e),
	\]
	for any rational $\zeta'>z-c^*$. 
	Letting $\Q\ni \zeta' \to (z-c^*)^+$, we eventually conclude that
	$$\mathcal{F}_{e,c^*} [a^*] (y, z + y \cdot e - c^*)\geq 
	a^*(y, z + y \cdot e - c^*).$$
	Property $(iii)$ then follows by iteration.
	
Next, fix $m\in\N$ and a positive $\zeta\in\Q$. 
We know by \eqref{eq:nc1} that, for every $k\in\N$ and~$y\in\R^N$,
\[\begin{split}
a_{c^k, n(c^k)+m} (y,\zeta+z_{c^k, n(c^k)}+y\cdot e ) &=
(\mathcal{F}_{e,c^k})^m [a_{c^k, n(c^k)}] (y,\zeta+z_{c^k, n(c^k)}+y\cdot e )\\
&\geq(\mathcal{F}_{e,c^*})^m [a_{c^k, n(c^k)}] (y,\zeta+z_{c^k, n(c^k)}+y\cdot e ).\\
\end{split}\]
Let $k$ large enough so that $1/\sqrt{c^* -c^k }\geq m$ and 
$2 \sqrt{c^* -c^k }<\zeta$. We deduce from 
\eqref{eq:zcn1} that $\zeta+z_{c^k, n(c^k)}>z_{c^k,n(c^k) + m}$ and thus
$$\min_{y\in[0,1]^N}
\big((\mathcal{F}_{e,c^*})^m [a_{c^k, n(c^k)}] (y,\zeta+z_{c^k, n(c^k)}+y\cdot e ) - \phi(y,-\infty)\big)
\leq 0.$$
Letting now $k\to+\infty$ and next $\zeta\to0^+$ and using the continuity of $\mathcal{F}_{e,c}$ (hence of~$(\mathcal{F}_{e,c})^m$) in the locally uniform topology, we eventually obtain
$$\forall m\in\N,\quad
\min_{y\in[0,1]^N}
\big((\mathcal{F}_{e,c^*})^m [a^*] (y,y\cdot e ) - \phi(y,-\infty)\big) \leq 0,$$
from which property $(iv)$ readily follows.

It remains to look into the asymptotics of $a^*$ as $z \to \pm \infty$. 
We define the left limit
$$a_{\ell} (y) := \lim_{ z \to -\infty} a^* (y,z + y \cdot e),$$
which exists by the monotonicity of $a^*(y,z)$ with respect to $z$, and it is 
locally uniform in~$y$. 
Then, by monotonicity and periodicity, we deduce that the limit $a^* (y,-\infty)=a_\ell(y)$ 
holds uniformly in $y$.
The function $a_\ell$ is continuous and periodic.
Moreover, the normalization condition \eqref{atilde>} yields $a_\ell(y)\geq\phi(y,-\infty)$. 
Finally, by the continuity of~$\mathcal{F}_{e,c^*}$ we get, for all $y \in [0,1]^N$, 
$$\mathcal{F}_{e,c^*}[a_{\ell}](y)=
\lim_{ z \to -\infty} \mathcal{F}_{e,c^*}[a^*] (y,z + y \cdot e)\geq a_{\ell}(y).$$
This means that the sequence $((\mathcal{F}_{e,c^*})^n[a_{\ell}])_{n\in\N}$ is 
nondecreasing. Because $a_{\ell}$ is independent of $z$,
by the definition \eqref{eq:def_mapping} we see that 
$(\mathcal{F}_{e,c^*})^n[a_{\ell}]$ reduces to $(\mathcal{F}_{e,0})^n[a_{\ell}]$, 
that is, to the solution of~\eqref{eq:parabolic} 
with initial datum $a_{\ell}$ computed at time $t=n$. 
Then, because $a_{\ell}\geq\phi(y,-\infty)$ and recalling that 
the latter lies in the basin of attraction of $\bar p$,
we infer that $(\mathcal{F}_{e,0})^n[a_{\ell}]\to \bar p$ as $n\to+\infty$, 
and the limit is uniform thanks to
Proposition~\ref{prop:Fec}$(iv)$.

In a similar fashion, we define the (locally uniform) right limit
$$a_{r} (y) := \lim_{ z \to +\infty} a^* (y,z + y \cdot e).$$
As before, we see that the limit $a^* (y,+\infty)=a_{r}(y)$ is uniform
in $y$, it is continuous, periodic
and the sequence $((\mathcal{F}_{e,0})^n[a_{r}])_{n\in\N}$
is nondecreasing. Therefore, $(\mathcal{F}_{e,0})^n[a_{r}](y)$ converges 
uniformly as $n\to+\infty$
to a fixed point $p(y)$ of $\mathcal{F}_{e,0}$.
This means that the solution $u$ of~\eqref{eq:parabolic} 
with initial datum $p$ is 1-periodic in time and periodic in space and
therefore, by Proposition~\ref{prop:energy1}, it is actually stationary. 
We conclude that $(vi)$ holds, completing the proof of the lemma.
\end{proof}

\subsection{The uppermost pulsating front}\label{sec:uppermost}

From now on, $a^*$ will denote the function provided by Lemma \ref{lem:limit1}
and more specifically defined by \eqref{def:a*} for a suitable sequence
$c^k\nearrow c^*$. Next we show that the discrete front
is given by the limit of the iterations
$(\mathcal{F}_{e,c^*})^n [a^*]$. We shall further show that its limit state as~$z\to+\infty$ is stable.

\begin{lem}\label{lem:U1}
There holds that
$$(\mathcal{F}_{e,c^*})^n [a^*](y,z+y\cdot e)\to U^* (y,z+y\cdot e)\quad\text{as }\;
n\to+\infty,$$
locally uniformly in $y$ and pointwise in $z$, where $U^*(x,z)$ is nonincreasing in $z$ and it 
is a discrete travelling front 
connecting~$\bar p$ to some stable periodic steady state~$p^*< \bar p$, in the sense of 
Definition~\ref{def:discrete_front}.
\end{lem}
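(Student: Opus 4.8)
The plan is to obtain $U^*$ as the monotone limit of the iterates $(\mathcal F_{e,c^*})^n[a^*]$, then verify each clause of Definition~\ref{def:discrete_front}. First I would invoke Lemma~\ref{lem:limit1}$(iii)$: the sequence $(\mathcal F_{e,c^*})^n[a^*]$ is nondecreasing in $n$, and it is bounded above by $\bar p$ (since $a^*\le\bar p$ and $\mathcal F_{e,c^*}$ preserves order and fixes $\bar p$). Hence for each $(y,z)$ it has a pointwise limit, which I call $U^*(y,z)$; by Lemma~\ref{lem:limit1}$(i)$ and the parabolic smoothing built into $\mathcal F_{e,c^*}$ (formula~\eqref{eq:def_mapping1}), the maps $y\mapsto(\mathcal F_{e,c^*})^n[a^*](y,z+y\cdot e)$ are equi-uniformly continuous, so the convergence is locally uniform in $y$ (still only pointwise in $z$). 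Periodicity in $y$ and monotonicity in $z$ pass to the limit. Applying the continuity property Proposition~\ref{prop:Fec}$(iii)$ to $(\mathcal F_{e,c^*})^{n}[a^*]\to U^*$ and using $(\mathcal F_{e,c^*})^{n+1}[a^*]\to U^*$ as well gives $\mathcal F_{e,c^*}[U^*]=U^*$; rewriting via~\eqref{eq:def_mapping1}, this is exactly the discrete-travelling-front functional equation with speed $c^*$ and direction $e$.

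Next I would identify the two limiting states. The limit $U^*(\cdot,-\infty)$ exists by monotonicity in $z$, is periodic and continuous, and since $(\mathcal F_{e,c^*})^n[a^*](\cdot,-\infty)\nearrow\bar p$ by Lemma~\ref{lem:limit1}$(v)$ while $(\mathcal F_{e,c^*})^n[a^*]\le U^*$, we get $U^*(\cdot,-\infty)\equiv\bar p$ — I should be careful to exchange the $z\to-\infty$ and $n\to+\infty$ limits using the monotonicity in both variables. Similarly, set $p^*:=U^*(\cdot,+\infty)$; from Lemma~\ref{lem:limit1}$(vi)$, $(\mathcal F_{e,c^*})^n[a^*](\cdot,+\infty)\nearrow p$, so $p\le p^*$, and since $p^*$ is a fixed point of $\mathcal F_{e,0}$ it is (by Proposition~\ref{prop:energy1}) a periodic steady state, with $p^*\le\bar p$. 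The strict inequality $p^*<\bar p$ is where Lemma~\ref{lem:limit1}$(iv)$ enters: that clause says $\max_{y}(\bar p(y)-(\mathcal F_{e,c^*})^n[a^*](y,y\cdot e))$ stays bounded away from $0$, hence $U^*(y,y\cdot e)<\bar p(y)$ for some $y$; combined with the fact (order-interval trichotomy / Dancer–Hess, as recalled after Assumption~\ref{ass:bi}, or directly from Proposition~\ref{prop:energy1}) that a periodic steady state touching $\bar p$ at one point and lying below it must coincide with $\bar p$, this forces $p^*\not\equiv\bar p$, and the strong comparison principle in Proposition~\ref{prop:Fec}$(ii)$ then upgrades this to the strict ordering $p^*<U^*<\bar p$ required in the definition. (I would also note $p^*\ge\phi(\cdot,+\infty)=0$, so $0\le p^*<\bar p$.)

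The last and genuinely substantive step is proving that $p^*$ is a \emph{stable} steady state — this is the only place Assumption~\ref{ass:speeds} is used. Suppose $p^*$ is unstable. Since there are finitely many stable states, one can squeeze $p^*$ between the smallest stable state $p_+$ lying strictly above it and the largest stable state $p_-$ below (as in the discussion after Theorem~\ref{thm:mono}), and there is no steady state strictly between $p^*$ and $p_+$. The front $U^*$ connects $\bar p$ down to $p^*$ with speed $c^*$; I would compare $c^*$ on one side with the monostable spreading speeds attached to $p^*$. Concretely, above $p^*$ the problem is monostable toward $p_+$, giving a minimal front speed $\overline c_{p^*}$, and the profile $U^*$ near its right end sits above a front-like datum for that monostable problem, forcing $c^*\ge\overline c_{p^*}$ by the spreading comparison (exactly as the $U^*_+\equiv\theta$ case was ruled out in Section~\ref{sec:1D}). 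On the other side, the construction of $c^*$ via the diagonal sequence and the normalization~\eqref{atilde<} should yield $c^*\le\underline c_{p^*}$ by a symmetric comparison with the monostable problem \emph{below} $p^*$ (connecting $p^*$ down to $p_-$), mirroring the $V^*$ argument in Section~\ref{sec:1D}. Together these give $\overline c_{p^*}\le c^*\le\underline c_{p^*}$, contradicting Assumption~\ref{ass:speeds}. Hence $p^*$ is stable. The main obstacle is this last step: setting up the right sub-/supersolutions and the correct ``crossing point'' estimates in the $N$-dimensional periodic discrete setting so that the two one-sided speed comparisons go through cleanly — in particular tracking how~\eqref{eq:zcn1} and~\eqref{atilde<} survive the passage to $(\mathcal F_{e,c^*})^n[a^*]\to U^*$ and translate into a bound $c^*\le\underline c_{p^*}$, since the lack of $z$-regularity of $a^*$ makes the limit procedure delicate.
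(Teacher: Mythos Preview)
Your outline is correct and follows the paper's approach through the first four steps almost exactly (convergence by monotonicity plus equicontinuity, fixed-point identity via continuity of $\mathcal F_{e,c^*}$, left asymptotic $\bar p$ from Lemma~\ref{lem:limit1}$(v)$, right asymptotic a periodic steady state strictly below $\bar p$ via Lemma~\ref{lem:limit1}$(iv)$ and Proposition~\ref{prop:energy1}). The bound $c^*\ge\overline c_{p^*}$ is also correctly sketched: compare $U^*$ from below with a Heaviside datum between $p^*$ and $p_+$ and invoke Weinberger's spreading speed.

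The genuine gap is in your plan for $c^*\le\underline c_{p^*}$. You propose to extract it from $a^*$ (or $U^*$) via the normalization~\eqref{atilde<}, but that normalization sits at the level $\phi(\cdot,-\infty)$, which lies \emph{above} $p^*$; it gives you no handle on what happens below $p^*$. More fundamentally, the right asymptotic $p$ of $a^*$ from Lemma~\ref{lem:limit1}$(vi)$ may already equal $p^*$, in which case neither $a^*$ nor $U^*$ ever dips below $p^*$ and there is no level set between $p_-$ and $p^*$ to track in these objects. The paper handles this by going back to the \emph{pre-limit} family $(\mathcal F_{e,c^*})^{n_k}[a_{c^k,n(c^k)}]$ (which does satisfy $a_{c^k,n(c^k)}(\cdot,+\infty)=0\le p_-$), choosing a \emph{new} crossing point $\hat z_k$ at a level $p_-+\delta$ with $p_-+\delta<p^*$, showing via \eqref{eq:k+1}--\eqref{eq:2k-1} that $\hat z_k - z_{c^k,n(c^k)}\to+\infty$, and then taking a second relaxed diagonal limit (Lemma~\ref{lem:diagonal}) to produce a function $\hat\alpha^*\le p^*$ with the right normalization. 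Only then does the Heaviside comparison below $p^*$ yield $c^*\le\underline c_{p^*}$. Your reference to ``mirroring the $V^*$ argument in Section~\ref{sec:1D}'' is the right instinct---that argument also returns to $a_{c_k,n(c_k)+k}$ at a new level $\theta/2$---but in the periodic $N$-dimensional setting this second diagonal extraction and the estimates showing $\hat\alpha^*\le p^*$ are the actual content, not merely a regularity issue.
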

\begin{proof}
Let us observe that, because $((\mathcal{F}_{e,c^*})^n [a^*])_{n\in\N}$ is a nondecreasing sequence, it is already clear that it converges pointwise to some function $U^*(y,z)$ which is periodic in~$y$ and nonincreasing in $z$. By writing 
$$(\mathcal{F}_{e,c^*})^{n+1} [a^*] (y,z+y\cdot e) = 
\mathcal{F}_{e,c^*}\circ(\mathcal{F}_{e,c^*})^n [a^*] (y,z+y\cdot e),$$ 
we deduce from Proposition \ref{prop:Fec}$(iv)$ that
$(\mathcal{F}_{e,c^*})^n [a^*] (y,z+y\cdot e)$ converges as $n \to +\infty$ locally uniformly in~$y$, 
for any $z\in\R$. In particular, we can pass to the limit $n \to +\infty$ in the above equation and conclude that $U^*$ is a fixed point for $\mathcal{F}_{e,c^*}$.

Let us now turn to the asymptotics as $z \to \pm \infty$. We know from  
Lemma~\ref{lem:limit1}$(v)$ that 
$(\mathcal{F}_{e,c^*})^n [a^*] (\cdot , -\infty) \to \bar p$ as $n \to +\infty$. 
We can easily invert these limits using 
the continuity of $(\mathcal{F}_{e,c^*})^n$ and the uniformity 
of the limit $a^*(\cdot,-\infty)$, together with the
monotonicity of $U^*$ in the second variable.
This yields $U^*(\cdot,-\infty) \equiv \bar p$.

Next, property~$(iv)$ of Lemma~\ref{lem:limit1} implies that
$$\max_{y\in[0,1]^N}\big(\bar p(y)-U^*(y,y\cdot e)\big) >0.$$
Writing $U^*(y,+\infty)=\lim_{z\to+\infty}U^*(y,z+y\cdot e)$,
we deduce that the limit $U^*(\cdot,+\infty)$ is uniform and therefore
$\mathcal{F}_{e,0}[U^*](\cdot,+\infty)\equiv
\mathcal{F}_{e,c^*}[U^*](\cdot,+\infty)\equiv U^*(\cdot,+\infty)$. 
We also deduce from the previous inequality
that $U^*(\cdot,+\infty)\not\equiv\bar p $.
As seen in Proposition~\ref{prop:energy1},
any solution of~\eqref{eq:parabolic}
that is periodic in both time and space is actually constant in time.
Thus, $U^*(\cdot ,+\infty)$ is a periodic steady state of~\eqref{eq:parabolic},
denoted by~$p^*$, 
that satisfies $0\leq p^*<\bar p$, where the second inequality is strict due to the 
elliptic strong maximum principle.

It remains to check that $p^*$ is stable. 
We shall do this using Assumption~\ref{ass:speeds}.
Proceed by contradiction and assume that~$p^*$ is unstable. 
As seen after the statement of Theorem~\ref{thm:mono},
Assumption~\ref{ass:mix} guarantees the existence of a minimal (resp.~maximal)
stable periodic  steady state above (resp.~below)~$p^*$, denoted by
$p_+$ (resp.~$p_-$), and also that~\eqref{eq:parabolic} is of the 
monostable type between $p_-$ and $p^*$, as well as between $p^*$ and~$p_+$. 
As a consequence, Theorem~\ref{thm:mono}
provides two minimal speeds of fronts~$\overline{c}_{p^*}$ and~$\underline{c}_{p^*}$
connecting $p_+$ to $p^*$ and $p^*$ to $p_-$ respectively. 
Our Assumption~\ref{ass:speeds} states that~$\underline{c}_{p^*}<\overline{c}_{p^*}$. 
According to Weinberger~\cite{W02}, these quantities coincide with the spreading speeds 
for~\eqref{eq:parabolic} in the ranges between $p^*$ and $p_+$ and between
$p_-$ and $p^*$ respectively.
Namely, taking a constant $\delta>0$ such that
$ p^*+\delta < p_+$, and considering the Heaviside-type function
$$H(y,z):=\begin{cases} p^*(y)+\delta & \text{if }z<-\sqrt N,\\
 p^* (y) & \text{if }z\geq-\sqrt N,
\end{cases}$$
we have that for any $Z \in \mathbb{R}$, the solution $v(t,y;H(x,Z+x\cdot e))$ of \eqref{eq:parabolic} spreads with speed $\overline{c}_{p^*}$ in the following sense: for any $\varepsilon >0$, 
$$\lim_{t \to+\infty} \sup_{y \cdot e \leq (\overline{c}_{p^*}-\varepsilon) t} 
|v(t,y;  H(x,Z+x\cdot e)) - p_+ (y)| = 0,$$
$$\lim_{t \to +\infty} \sup_{y \cdot e \geq (\overline{c}_{p^*} + \varepsilon) t} 
|v(t,y;H (x,Z+x\cdot e)) - p^*(y)| = 0.$$
A similar result holds when looking at solutions between $p_-$ and $p^*$.

Let us show that $c^* \geq \overline{c}_{p^*}$. 
Since $U^* (\cdot, - \infty) \equiv \bar p \geq p_+$ and $U^* \geq p^*$, we can choose $Z >0$ large enough so that
$$U^*  \geq H (\cdot, \cdot + Z).$$
Now we argue by contradiction and assume that $c^* < \overline{c}_{p^*}$.
Then, calling $\varepsilon:=(\overline{c}_{p^*}-c^*)/2$, we have that
$\overline{c}_{p^*}-\eps=c^*+\eps$ and thus, by comparison,
\begin{equation}\label{eq:spread11}
\liminf_{n \to +\infty} \inf_{y\cdot e \leq (c^*+\varepsilon)n} 
\big(v(n,y; U^* (x,x\cdot e)) - p_+ (y )\big) \geq 0.
\end{equation}
Consequently, because
$$v(n,y ; U^* (x,x\cdot e)) = (\mathcal{F}_{e,c^*})^n [U^*] (y  , y\cdot e - nc^*) = U^* (y, y \cdot e - nc^*),$$
we find that $U^*(y,y\cdot e - nc^*) > p_+ (y)-\delta$
for $n$ sufficiently large and for all $y$ such that 
$y \cdot e \leq (c^*+ \varepsilon)n$.
Taking for instance $y = (c^* + \varepsilon )n\, e$
and passing to the limit as $n \to +\infty$ yields
$p^* (y_\infty) \geq p_+ (y_\infty)-\delta$, where $y_\infty$ 
is the limit of $(c^* + \varepsilon )n\, e$ (up to subsequences and modulo the periodicity;
recall that the limit $U^*(\cdot ,+\infty)$ is uniform). This is impossible because
$\delta$ was chosen in such a way that
$p^* + \delta < p_+$. As announced, there holds $c^* \geq \overline{c}_{p^*}$.

Let us now show that $c^* \leq \underline{c}_{p^*}$. 
The strategy is to follow a level set between $p_-$ and~$p^*$ of a suitable iteration
$(\mathcal{F}_{e,c^*})^n[a_{c,n(c)}]$ and to pass again to the (relaxed) limit 
as $c\nearrow c^*$. Notice that, in the situation where~$p$ (coming from 
Lemma~\ref{lem:limit1}$(vi)$) satisfies $p < p^*$, then it would be sufficient to consider the sequence 
$((\mathcal{F}_{e,c^*})^n [a^*])_n$ to capture such a level set; 
however it may happen that $p \equiv p^*$ and for this reason we need to 
come back to the family~$a_{c,n(c)}$.

For $k\in\N$, we can find $n_k\in\N$ such that the following properties hold:
$$
\max_{y\in[0,1]^N}\big|(\F{c^*})^{n_k}[a^*](y,k+y\cdot e)-U^*(y,k+y\cdot e)\big|
<\frac1k,
$$
$$
\max_{y\in[0,1]^N}\big|(\F{c^*})^{n_k}[a^*](y,2k+y\cdot e)-U^*(y,2k+y\cdot e)\big|
<\frac1k.
$$
Then, recalling the definition \eqref{def:a*}
of $a^*$ and up to extracting a subsequence of the sequence $c^k\nearrow c^*$ 
appearing there,
we find that for every $k\in\N$, there holds
\Fi{eq:k+1}
\max_{y\in[0,1]^N}\big((\F{c^*})^{n_k}[a_{c^k,n(c^k)}](y,k+1+z_{c^k,n(c^k)}+y\cdot e)-
U^*(y,k+y\cdot e)\big)<\frac2k,
\Ff
\Fi{eq:2k-1}
\min_{y\in[0,1]^N}\big((\F{c^*})^{n_k}[a_{c^k,n(c^k)}](y,2k-1+z_{c^k ,n(c^k)}+y\cdot e)-
U^*(y,2k+y\cdot e)\big)>-\frac2k.
\Ff
Notice that in~\eqref{eq:k+1} and \eqref{eq:2k-1} we have translated by $z_{c^k, n(c^k)}\pm1$ instead of $z_{c^k,n(c^k)}$ 
because of the `relaxed'
limit in \eqref{def:a*}. 
In order to pick the desired level set, take a constant $\delta>0$ small enough so that
$p_- +\delta< p^*$. We then define
$$\hat z_k :=\inf\{z\ :\ (\F{c^*})^{n_k}[a_{c^k,n(c^k)}](y,z+y\cdot e)- p_- (y)\leq \delta \text{ for all }y\in[0,1]^N\}.$$
Observe that $\hat z_k\in\R$ and actually $\hat z_k\geq z_{c^k ,n(c^k)}$,
as a consequence of the definition of $z_{c,n}$ in Lemma~\ref{lem:n(c)} 
and the fact that $\varphi (\cdot, - \infty) > p^*$, since it lies in the basin of attraction of $\bar p$. Because $U^*(y,+\infty)=p^*(y)>p_- (y)+\delta$
uniformly in $y$, we deduce from \eqref{eq:2k-1} that, for $k$ large enough,
$$\min_{y\in[0,1]^N}\big((\F{c^*})^{n_k}[a_{c^k,n(c^k)}](y,2k-1+z_{c^k ,n(c^k)}+y\cdot e)-
p_- (y)\big)>\delta,$$
whence $\hat z_k\geq2k-1+z_{c^k ,n(c^k)}$. It then follows from \eqref{eq:k+1} that, 
for $k$ sufficiently large,
\Fi{eq:-k+2}
\max_{y\in[0,1]^N}\big((\F{c^*})^{n_k}[a_{c^k,n(c^k)}](y,\hat z_k-k+2+y\cdot e)-
U^*(y,k+y\cdot e)\big)<\frac2k.
\Ff

We now apply Lemma \ref{lem:diagonal} to the sequence 
$\big((\F{c^*})^{n_k}[a_{c^k,n(c^k)}](y,z+\hat z_k +y\cdot e)\big)_{k\in\N}$.
This provides us with a function $\hat \alpha^*(y,z)$ 
periodic in $y$ and nonincreasing in $z$
and such that $\hat \alpha^*(y,z+y\cdot e)$ is uniformly continuous 
in $y\in\R^N$, uniformly with respect to $z\in\R$.
Moreover, proceeding exactly as in the proof of Lemma \ref{lem:limit1},
we deduce from the inequality $\F{c^k}\circ(\F{c^*})^{n_k}[a_{c^k,n(c^k)}]\geq 
(\F{c^*})^{n_k}[a_{c^k,n(c^k)}]$ that
$(\mathcal{F}_{e,c^*})^n [\hat \alpha^*]$ is nondecreasing with respect to~$n$.
The choice of $\hat z_k$ further implies that
\Fi{alpha>p+d}
\forall z< 0, \quad \max_{y \in [0,1]^N}
\Big(\hat \alpha^*(y, z + y \cdot e) -p_- (y)\Big)\geq\delta,
\Ff
$$\max_{y \in [0,1]^N}\Big(\hat \alpha^*(y,y\cdot e) - p_- (y)\Big)\leq\delta.$$ 
Finally, property \eqref{eq:-k+2} and the monotonicity in $z$ yield $\hat \alpha^*\leq p^*$.

We are now in a position to prove that $c^* \leq \underline{c}_{p^*}$. We again use a comparison argument with an Heaviside-type function. Indeed, from the above, we 
know that $\hat \alpha^* \leq \hat{H}$, where
$$\hat{H} (y,z) :=
\left\{
\begin{array}{ll}
p^* (y) & \mbox{if } z \leq \sqrt{N},\\
p_- (y)+\delta & \mbox{if } z > \sqrt{N}.
\end{array}
\right.
$$
According to Weinberger's spreading result in \cite{W02}, the solution $v(t,y;\hat{H}(x,x\cdot e))$ of~\eqref{eq:parabolic} spreads with speed~$\underline{c}_{p^*}$, which implies in
particular that for any $\varepsilon >0$,
$$\lim_{t \to \infty} \sup_{y \cdot e \geq (\underline{c}_{p^*} + \varepsilon) t} |v(t,y;\hat{H} (x,x\cdot e)) - p_- (y)| = 0.$$
By comparison we obtain
$$\limsup_{n \to +\infty} \sup_{y \cdot e \geq (\underline{c}_{p^*} + \varepsilon) n} 
\Big(v(n,y; \hat \alpha^* (x,x\cdot e)) - p_- (y)\Big)\leq 0.$$
However, because $(\mathcal{F}_{e,c^*})^n[\hat \alpha^*] $ is
nondecreasing in $n$, we have that
$$\hat \alpha^* (y,y\cdot e - n c^*)\leq
(\mathcal{F}_{e,c^*})^n [\hat \alpha^*] (y,y\cdot e - n c^*)=
v(n,y; \hat \alpha^*(x,x+\cdot e)),$$
and thus
$$\limsup_{n \to +\infty} \sup_{y \cdot e \geq (\underline{c}_{p^*} + \varepsilon) n} 
\Big(\hat \alpha^* (y,y\cdot e - n c^*) - p_- (y)\Big)\leq 0.$$
For $y \in[0,1]^N$ and $\xi_n:=[(\underline{c}_{p^*} + \varepsilon) n+\sqrt{N}]e$
there holds $(y+\xi_n)\cdot e
\geq (\underline{c}_{p^*} + \varepsilon) n$, whence
$$\limsup_{n \to +\infty}\max_{y\in[0,1]^N}
\Big( \hat \alpha^* (y+\xi_n, (\underline{c}_{p^*} + \varepsilon - c^*)n) - 
p_- (y+\xi_n)\Big) \leq 0.$$
By periodicity, we can drop the $\xi_n$ in the above expression.
We eventually deduce from~\eqref{alpha>p+d} that 
$\underline{c}_{p^*} + \varepsilon - c^*\geq0$, that is, 
$\underline{c}_{p^*} \geq c^*$ due to the arbitrariness of $\eps>0$.

In the end, we have shown that $\overline{c}_{p^*} \leq c^* \leq \underline{c}_{p^*}$, which directly contradicts Assumption~\ref{ass:speeds}. 
Lemma~\ref{lem:U1} is thereby proved.
\end{proof}
\begin{rmk}
Under the bistable Assumption~\ref{ass:bi}, obviously $p^*$ has to be 0, and therefore we have constructed a discrete travelling front connecting $\bar p$ to $0$. In order to conclude the proof of Theorem~\ref{th:bi}, one may directly skip to Section~\ref{sec:continuous}.
\end{rmk}

\section{A (discrete) propagating terrace}\label{sec:terrace}

At this stage we have constructed the `highest floor' of the terrace.
Then in the bistable case we are done. In the multistable case it remains to construct the lower floors, and thus we place ourselves under the pair of Assumptions~\ref{ass:multi} and \ref{ass:speeds}.
To proceed, we iterate
the previous argument to the restriction of \eqref{eq:parabolic} to the `interval' $[0,p^*]$, with~$p^*$ given by Lemma~\ref{lem:U1}, 
and we find a second travelling front connecting $p^*$ to another stable state smaller than $p^*$. 
For this the stability of $p^*$ is crucial.
The iteration ends as soon as we reach the $0$ state, 
which happens in a finite number of steps because there is a finite number of stable periodic steady states.

This procedure provides us with some finite sequences $(q_j)_{0\leq j\leq J}$ and 
$(U_j)_{1\leq j\leq J}$, where the $q_j$ are {\em linearly}
stable periodic steady states and the $U_j$ are discrete travelling fronts connecting $q_{j-1}$ to~$q_j$. 
We need to show that the speeds are ordered, so that the family of travelling fronts we construct is a (at this point, discrete) propagating terrace. 
It is here that we use the linear stability hypothesis in Assumption~\ref{ass:multi}.
As we mentioned in the introduction, the order of the speeds is a crucial property of the terrace, 
which is not a mere collection of unrelated fronts but what should actually emerge 
in the large-time limit of solutions of the Cauchy problem.

\begin{prop}
	Under Assumptions~\ref{ass:multi} and~\ref{ass:speeds}, the speeds $c_j$ of the fronts $U_j$ are ordered:
		$$c_1 \leq c_2 \leq \cdots \leq c_J .$$
\end{prop}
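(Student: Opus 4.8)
The plan is to prove the two-term inequality $c_j\le c_{j+1}$ for every $1\le j\le J-1$, which yields the whole chain. Recall that, by the construction of the terrace, $c_j$ is the speed $c^*$ provided by Lemma~\ref{lem:ac1} for the restriction of \eqref{eq:parabolic} to the order interval $[0,q_{j-1}]$ — equivalently, by \cite{W02} and the results of Section~\ref{sec:N}, the spreading speed towards $q_{j-1}$ of that problem — while $c_{j+1}$ is the analogous quantity for $[0,q_j]$; by Lemma~\ref{lem:indep1} neither depends on the initialising function. I would argue by contradiction: assume $c_{j+1}<c_j$ and fix $c^\sharp\in(c_{j+1},c_j)$.

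The first step is a comparison argument with the two fronts surrounding $q_j$. Let $v(t,y)$ solve \eqref{eq:parabolic} with an initial datum $v_0$ that is nonincreasing along $e$, coincides with the translate $U_j(y,\alpha+y\cdot e)$ for $y\cdot e\le0$ (so $v_0\to q_{j-1}$ as $y\cdot e\to-\infty$ and $v_0>q_j$ there), equals $q_j(y)$ on a long slab $\{0\le y\cdot e\le M\}$, and coincides with $U_{j+1}(y,\beta+y\cdot e)$ for $y\cdot e>M$ (so $v_0<q_j$ there and $v_0\to q_{j+1}$). Since $U_j>q_j>U_{j+1}$ one has $U_{j+1}(\cdot,\beta+\cdot)\le v_0\le U_j(\cdot,\alpha+\cdot)$ everywhere, so the parabolic comparison principle (both fronts being entire solutions of \eqref{eq:parabolic}) gives
\[
U_{j+1}(y,\beta+y\cdot e-c_{j+1}t)\ \le\ v(t,y)\ \le\ U_j(y,\alpha+y\cdot e-c_jt),
\]
and in particular $q_{j+1}\le v\le q_{j-1}$. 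Moreover $v_0\ge0$ and $v_0\to q_{j-1}$ as $y\cdot e\to-\infty$, so $v_0$ dominates some $\phi$ satisfying \eqref{ass:phi} relative to the interval $[0,q_{j-1}]$; since $c^\sharp<c_j=c^*$ for that problem, Lemmas~\ref{lem:ac1}--\ref{lem:ac2} (unwound exactly as in the characterisation of the spreading speed) give $v(t,\cdot;\phi)\to q_{j-1}$ uniformly on $\{y\cdot e\le c^\sharp t\}$, whence by comparison
\[
v(t,y)\ \longrightarrow\ q_{j-1}(y)\qquad\text{uniformly for }y\cdot e\le c^\sharp t .
\]

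The contradiction must come from the region between the two fronts: I would show that $v(t,\cdot)\to q_j$ along the ray $y\cdot e\simeq c^\sharp t$, which is incompatible with the last display because $q_j<q_{j-1}$. This is exactly the step that needs the \emph{linear} stability of $q_j$ in Assumption~\ref{ass:multi}. Indeed, because $c_{j+1}<c^\sharp<c_j$, for large $t$ the strip $\{|y\cdot e-c^\sharp t|\le R\}$ lies strictly between the two fronts, and there both barriers above degenerate (the upper one to $q_{j-1}$, the lower one to $q_{j+1}$), so only the trivial bound $q_{j+1}\le v\le q_{j-1}$ survives: it is the stability of $q_j$ that has to pin $v$ down. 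The tool is the pair of super- and subsolutions $q_j\pm\eta\,e^{\mu_jt}\psi_j$, where $\psi_j>0$ is the principal eigenfunction of $\mathcal L_{q_j}$ and $\mu_j<0$ its principal eigenvalue; these are valid as long as $v$ stays in an $O(\eta)$-neighbourhood of $q_j$, and combined with a localisation/finite-speed-of-propagation argument they transport closeness to $q_j$ from the flat slab initially present in $v_0$ into the whole region that $v$ separates from $q_{j-1}$ — the ``collision region'' forced by $c_j>c_{j+1}$ — thereby trapping a strip around $y\cdot e\simeq c^\sharp t$ near $q_j$ for all large $t$. Making this localisation rigorous is, I expect, the main obstacle, since the exponential barriers are global in space and must be cut off or replaced by comparisons on expanding balls; one must also rule out that some other periodic steady state between $q_{j+1}$ and $q_{j-1}$ arises as the limit in the intermediate region, and this is where the monostable spreading speeds $\overline c_\theta,\underline c_\theta$ and Assumption~\ref{ass:speeds} re-enter, just as in the proof of Lemma~\ref{lem:U1}. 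Once $v(t,\cdot)\to q_j$ along $y\cdot e\simeq c^\sharp t$ is established, it contradicts $v(t,\cdot)\to q_{j-1}$ there, and hence $c_{j+1}\ge c_j$.
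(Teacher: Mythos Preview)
Your approach diverges from the paper's, and the crucial step (B) --- showing that $v(t,\cdot)\to q_j$ along the ray $y\cdot e\simeq c^\sharp t$ --- is not just technically incomplete but appears to be unachievable by the mechanism you describe. The difficulty is this: your own claim (A), that $v(t,\cdot)\to q_{j-1}$ uniformly on $\{y\cdot e\le c^\sharp t\}$, follows from the construction of $c_j$ as the $c^*$ of Section~\ref{sec:N} on $[0,q_{j-1}]$ and holds \emph{regardless} of the contradictory hypothesis $c_{j+1}<c_j$. Hence (B), which directly contradicts (A), is simply false, and no stability argument can rescue it. Concretely, your initial $q_j$-plateau sits at a fixed location $\{0\le y\cdot e\le M\}$, which by (A) is absorbed into the region where $v\to q_{j-1}$; and under the assumption $c_{j+1}<c_j$ the comparison bounds $U_{j+1}\le v\le U_j$ give no trapping near $q_j$ anywhere for large $t$ (along $y\cdot e\simeq c^\sharp t$ the upper bound degenerates to $q_{j-1}$ and the lower one to $q_{j+1}$). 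The exponential barriers $q_j\pm\eta e^{\mu_j t}\psi_j$ require $v$ to already be $O(\eta)$-close to $q_j$, and there is no region where this is guaranteed to persist.

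The paper proceeds quite differently. It returns to the iterative scheme generating $U_1$ and, by tracking a \emph{lower} level set (between $q_2$ and $q_1$) of the sequence $(\mathcal{F}_{e,c_1})^{n_k}[a_{c^k,n(c^k)}]$, extracts via Lemma~\ref{lem:diagonal} an auxiliary function $\hat\alpha$ satisfying $\hat\alpha\le q_1$, $\hat\alpha(\cdot,+\infty)\le q_2$, and $(\mathcal{F}_{e,c_1})^n[\hat\alpha]$ nondecreasing in $n$. It then compares $\hat\alpha$ with a perturbed front $U_{2,\eps}:=U_2+\eps\Phi$, where $\Phi$ interpolates between the principal eigenfunctions $\varphi_{q_1}$ and $\varphi_{q_2}$; the linear stability of $q_1,q_2$ is used precisely to make $U_{2,\eps}$ a supersolution near the endpoints. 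A sliding argument combined with the strong maximum principle then yields, after letting $\eps\to0$, two ordered solutions touching at a point, and a final spreading comparison produces the contradiction with $c_2<c_1$. The key conceptual difference is that the paper works with a genuinely ``sub-front'' object $\hat\alpha$ living between $q_2$ and $q_1$ and moving at speed $c_1$, rather than attempting to pin a full Cauchy solution near the intermediate state $q_j$.
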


\begin{proof}	
We only consider the two uppermost travelling fronts $U_1$ and $U_2$ and 
we show that $c_1 \leq c_2$. The same argument applies for the subsequent speeds. 

We first come back to the family $(a_{c,n})_{c,n}$ and the function $a^*$ used to construct the front~$U_1$
	connecting $q_0 \equiv \bar p$ to~$q_1$. The main idea is that, capturing another level 
	set between $q_2$ and $q_1$, we should obtain a solution moving with a
	speed larger than or equal to $c_1$, but which is smaller than $q_1$.
	Then, comparing it with the second front $U_2$, we expect to recover the desired 
	inequality $c_1 \leq c_2$.

	In the proof of Lemma \ref{lem:U1}, we have constructed two sequences
	$(n_k)_{k\in\N}$, $(c^k)_{k\in\N}$, with $c^k\nearrow c_1$,
	such that \eqref{eq:k+1}, \eqref{eq:2k-1} hold with $c^*=c_1$ and $U^*=U_1$.
Take a small positive constant $\delta$ so that $q_j \pm \delta$ lie in the basin of attraction of $q_j$, 
for $j=1,2$, and moreover $\min (q_1 - q_2) \geq 2 \delta$.
Then define
	$$\hat{z}_k:=\inf\{z\ :\ (\F{c_1})^{n_k}[a_{c^k,n(c^k)}](y,z+y\cdot e)- q_2 (y)\leq
	\delta \text{ for all }y\in[0,1]^N\}.$$
The inequality
\eqref{eq:2k-1} implies that, for $y\in[0,1]^N$ and $z\leq z_{c^k,n(c^k)} + 2k -1$, there holds
$$(\F{c_1})^{n_k}[a_{c^k,n(c^k)}](y,z+y\cdot e)>U_1(y,2k+y\cdot e)-\frac2k\to q_1(y)
\quad\text{as }\;k\to+\infty.$$
Because $q_1>q_2 +\delta$, we infer that, for $k$ large enough,
$$\hat{z}_k\geq z_{c^k,n(c^k)} + 2k -1,$$
whence, by \eqref{eq:k+1},
\Fi{zk-k}
\max_{y\in[0,1]^N}\big((\F{c_1})^{n_k}[a_{c^k,n(c^k)}](y, \hat{z}_k +2-k+y\cdot e)
- U_1(y,k + y \cdot e )\big)<\frac{2}{k}.
\Ff
We now consider the sequence of functions 
$\big((\F{c_1})^{n_k}[a_{c^k,n(c^k)}](y,z+\hat{z}_k+y\cdot e)\big)_{k\in\N}$ and apply Lemma~\ref{lem:diagonal}. We obtain a function $\hat{\alpha} (y,z)$ which is periodic in $y$, nonincreasing in $z$. Moreover, it is such that $\hat{\alpha} (y,z+y\cdot e)$ is uniformly continuous in $y$, uniformly with respect to $z$, and $(\mathcal{F}_{e,c_1})^{n} [\hat \alpha]$ is nondecreasing with respect to $n$.
Our choice of $\hat{z}_k$ further implies
\begin{equation}\label{alphap'_norm}
\forall z< 0, \ \max_{ y \in [0,1]^N} \Big( \hat \alpha(y, z + y \cdot e)  - q_2 (y) \Big) \geq   \delta,
\end{equation}
\begin{equation}\label{alphap'_norm2}
\forall y \in [0,1]^N, \quad \hat \alpha (y,z+y\cdot e)\leq q_2 (y)+\delta .
\end{equation}
The latter property, together with the facts that $(\mathcal{F}_{e,c_1})^n [\hat \alpha](\cdot ,+\infty)$
is nondecreasing in $n \in \N$ and that $q_2+\delta$ lies in the basin of attraction of $q_2$, yield 
$$\hat \alpha (\cdot , + \infty) \leq q_2 .$$
On the other hand, using \eqref{zk-k} one infers that
$$\hat \alpha (\cdot , - \infty) \leq q_1 .$$

	Our aim is to compare $\hat \alpha$ with $U_2$ using the sliding method. To this end, we shall increase
	$U_2$ a bit without affecting its asymptotical dynamics, exploiting the linear stability of 
	$q_1$, $q_2$.
	Let $\varphi_{q_1}$ and $\varphi_{q_2}$ denote the periodic principal eigenfunctions associated
	with the linearization of \eqref{eq:parabolic} around $q_1$ and $q_2$ respectively,
	normalized by $\max \varphi_{q_1} =\max \varphi_{q_2}=1$. Then consider
	a smooth, positive function $\Phi=\Phi(y,z)$ which is periodic in $y$ and 
	satisfies
	$$\Phi(y,z)=\begin{cases}
				\varphi_{q_1}(y) & \text{if }z\leq-1 , \vspace{3pt}\\
				\varphi_{q_2}(y) & \text{if }z\geq1 ,
				\end{cases}$$
	and define, for $\eps \in (0,\delta)$,
	$$U_{2,\eps} (y,z):=U_2(y,z)+\eps\,\Phi(y,z).$$
	Now, because the limits as $z\to\pm\infty$ satisfy the following inequalities uniformly in $y$:
	$$U_{2,\eps}(y,-\infty)>q_1(y)\geq
	\hat \alpha(y,-\infty),\qquad 
	U_{2,\eps}(y,+\infty) > q_2(y)\geq
	\hat \alpha(y,+\infty),$$ 
	and using also \eqref{alphap'_norm},  
	we can define the following real number:
	$$Z_\eps:=\sup\big\{Z\ :\ 
		U_{2,\eps} (y,Z+z+y\cdot e)>\hat \alpha(y,z+y\cdot e)
		\text{ for all }(y,z)\in\R^{N+1}\big\}.$$ 
	Let us assume by way of contradiction that the speed of $U_2$ satisfies
	$c_2<c_1$. 
	Then if we fix
	$$\tilde Z_\eps\in(Z_\eps-c_1+c_2,Z_\eps),$$
	we can find $(y_\eps,z_\eps)\in\R^{N+1}$ such that
	\Fi{Zeps}
	U_{2,\eps} (y_\eps,\tilde Z_\eps+c_1-c_2+z_\eps+y_\eps\cdot e)\leq
	\hat \alpha(y_\eps,z_\eps+y_\eps\cdot e).
	\Ff
	Consider the following functions: 
	$$u_\eps(t,y):=v(t,y;\hat \alpha(x,c_1+z_\eps+x\cdot e)),$$
	$$w_\eps(t,y):=v(t,y;U_2 (x,\tilde Z_\eps+c_1+z_\eps+x\cdot e))
	+\eps\,\Phi(y,\tilde Z_\eps+c_1-c_2 t+z_\eps+y\cdot e).$$
	We find from one hand that
	\[\forall y\in\R^N,\quad
	w_\eps(0,y)-u_\eps(0,y)=
	U_{2,\eps}(y,\tilde Z_\eps+c_1+z_\eps+y\cdot e)
	-\hat \alpha (y,c_1+z_\eps+y\cdot e)>0\]
	because $\tilde Z_\eps<Z_\eps$. Hence, recalling that
	$U_{2,\eps},\hat \alpha$ are periodic in $y$ and satisfy
	$U_{2,\eps}(y,\pm\infty)>\hat \alpha(y,\pm\infty)$,
	which yields
	\Fi{w-u}
	\liminf_{y\cdot e\to\pm\infty}(w_\eps-u_\eps)(0,y)\geq
	\eps\min\left\{\min\varphi_{q_1},\min\varphi_{q_2}\right\}>0,
	\Ff
	we infer that
	$\inf_y(w_\eps(0,y)-u_\eps(0,y))>0$.	
	Then, by uniform continuity,
	$w_\eps>u_\eps$ for~$t>0$ small enough.
	From the other hand, using the fact that, for all $m\in\N$,
	\Fi{ueps}
	u_\eps(m,y)=\F{c_1}[\hat \alpha]^m (y,c_1+z_\eps+y\cdot e-m c_1)
	\geq \hat \alpha(y,c_1+z_\eps+y\cdot e-m c_1),
	\Ff
	\Fi{weps}
	w_\eps(m,y)=U_{2,\eps}(y,\tilde Z_\eps+c_1+z_\eps+y\cdot e-m c_2),
	\Ff
	we derive
	\[
	w_\eps(1,y_\eps)-u_\eps(1,y_\eps)\leq U_{2,\eps}(y_\eps,\tilde Z_\eps+c_1-c_2+z_\eps+y_\eps\cdot e)
	-\hat \alpha(y_\eps,z_\eps+y_\eps\cdot e),
	\]
	which is nonpositive by \eqref{Zeps}. Let us point out that, if $w_\eps$ was a supersolution on the whole domain, this would contradict the comparison principle; unfortunately we shall see below that we only know it to be a supersolution in some subdomains. Therefore we shall first use a limiting argument as $\eps \to 0$ to find that $\hat{\alpha}$ also lies below a shift of~$U_2$ itself, so that the comparison principle will become available.

	From the above we deduce the existence of a time $T_\eps\in(0,1]$ such that
	$w_\eps>u_\eps$ for $t\in[0,T_\eps)$ and
	$\inf_y(w_\eps-u_\eps)(T_\eps,y)=0$.
	There exists then a sequence $(y_\eps^n)_{n\in\N}$ satisfying $(w_\eps-u_\eps)(T_\eps,y_\eps^n)\to0$
	as $n\to+\infty$. 
	We observe that the sequence $(y_\eps^n\cdot e)_{n\in\N}$ is necessarily bounded because 
	the inequalities \eqref{w-u} hold true for all times,
	as a consequence of the fact that,
	for solutions of parabolic equations such as \eqref{eq:parabolic}, the property of being bounded 
	from one side by a steady state at the limit in a given direction is preserved along evolution.
	
	The linear stability of $q_1$ and $q_2$ means that 
	the periodic principal eigenvalues $\lambda_{q_1}$, $\lambda_{q_2}$ of
	the associated linearized operators are negative.
	Then, for a given solution $u$ to~\eqref{eq:parabolic}, the function 
	$u+\eps\varphi_{q_j}$, with $\eps>0$ and $j=1,2$, satisfies for $t>0$, $x\in\R^N$,
	\[\begin{split}
	\partial_t(u+\eps\varphi_{q_j}) - \text{div} (A(x) \nabla (u+\eps\varphi_{q_j}) )
	&=f (x,u)+(f_u(x,q_j)-\lambda_{q_j})\eps\varphi_{q_j}\\
	&=f(x,u+\eps\varphi_{q_j})+(f_u(x,q_j)-f_u(x,s)-\lambda_{q_j})\eps\varphi_{q_j},
	\end{split}\]
	for some $u(t,x)<s<u(t,x)+\eps\varphi_{q_j}(x)$.
	Thus, because $\lambda_{q_j}<0$, the regularity of~$f_u$
	allows us to find $\gamma >0$ such that
	$u+\eps\varphi_{q_j}$ is a supersolution to \eqref{eq:parabolic} 
	whenever $|u-q_j|<\gamma$ and $\eps\in(0,\gamma)$.
	From now on, we restrict to $\eps\in(0,\gamma)$.
	Take $Z\geq1$ in such a way that
	$$U_2 (\cdot,z)> q_1 -\gamma \ \text{ if }z\leq-Z,\qquad
	U_2 (\cdot,z)< q_2+\gamma \ \text{ if }z\geq Z,$$
	as well as, for all $0 \leq t \leq 1$,
	$$v (t,y; U_2 (x, \tilde Z_\eps + c_1 + z_\eps + x \cdot e)) > q_1 - \gamma 
	\quad \text{if } y \cdot e \leq -Z  - \tilde{Z}_\eps - c_1 + c_2 t - z_\eps ,$$
	$$v (t,y; U_2 (x, \tilde Z_\eps + c_1 + z_\eps + x \cdot e)) < q_2 + \gamma 
	\quad \text{if }  y \cdot e 
	\geq Z - \tilde{Z}_\eps - c_1 + c_2 t - z_\eps .$$
	We have just seen that these conditions imply the property that $w_\eps$ is a supersolution
	to~\eqref{eq:parabolic} in corresponding subdomains.
	We claim that this implies that
	\Fi{yeps}
	\liminf_{n\to+\infty}
	|\tilde Z_\eps+c_1+z_\eps+y_\eps^n\cdot e|\leq Z+|c_2|+3\sqrt{N},
	\Ff
	which will in turn guarantee that functions $u_\eps$ and $w_\eps$ do not become trivial as $\eps \to 0$.

	To prove \eqref{yeps}, consider $(k^n)_{n\in\N}$ in $\Z^N$ such that
	$y_\eps^n-k^n\in[0,1]^N$. 
	Clearly, $(k^n\cdot e)_{n\in\N}$ 	is bounded because $(y_\eps^n\cdot e)_{n\in\N}$ is.
	Let $y^\infty_\eps$ be the limit of (a subsequence of) $(y_\eps^n-k^n)_{n\in\N}$.
	The functions 
	$w_\eps(t,y+k^n)$ and $u_\eps(t,y+k^n)$ 
	converge as $n\to+\infty$ 	(up to subsequences) locally uniformly in 
	$[0,1)\times\R^N$ to some functions $\tilde w_\eps$, $\tilde u_\eps$
	satisfying
	$$\min_{[0,T_\eps]\times\R^N}(\tilde w_\eps-\tilde u_\eps)=
	(\tilde w_\eps-\tilde u_\eps)(T_\eps,y^\infty_\eps)=0.$$
	The function $\tilde u_\eps$ is a solution to \eqref{eq:parabolic}.
	Instead, $\tilde w_\eps$ is a supersolution to 
	\eqref{eq:parabolic} for $t\in(0,T_\eps]$ and $y\cdot e<2\sqrt{N}$
	or $y\cdot e>-2\sqrt{N}$ 
	if respectively one or the other of the following inequalities holds for 
	infinite values of $n$:
	$$\tilde Z_\eps+c_1 +z_\eps+k^n\cdot e<-Z-|c_2|-2\sqrt{N},
	\qquad \tilde Z_\eps+c_1 +z_\eps+k^n\cdot e>
	Z+|c_2|+2\sqrt{N}.$$	
	Hence if \eqref{yeps} does not hold we have that $\tilde w_\eps$ is a supersolution of 
	\eqref{eq:parabolic} in a half-space orthogonal to $e$ containing the point $y^\infty_\eps$, 
	and thus the parabolic strong maximum principle yields 
	$\tilde w_\eps\equiv\tilde u_\eps$ in such half-space for $t\leq T_\eps$.
	This is impossible because, by the boundedness of $(k^n\cdot e)_{n\in\N}$, 
	the property \eqref{w-u} holds true with $w_\eps-u_\eps$ replaced by~	$\tilde w_\eps-\tilde u_\eps$. This proves~\eqref{yeps}.
	 
	Using \eqref{yeps} we can find a family $(\tilde y_\eps)_{\eps\in(0,\gamma)}$ 
	such that $(\tilde Z_\eps+c_1+z_\eps+\tilde y_\eps\cdot e)_{\eps\in(0,\gamma)}$
	is bounded and
	$(w_\eps-u_\eps)(T_\eps,\tilde y_\eps)\to0$ as $\eps\to0$.
	Arguing as before, by considering the translations 
	$u_\eps(t,y+k_\eps)$, $w_\eps(t,y+k_\eps)$ with	
	$k_\eps\in\Z^N$ such that $\tilde y_\eps-k_\eps\in[0,1]^N$, we obtain at the limit
	$\eps\searrow0$ (up to some subsequences) two functions
	$\tilde u$ and $\tilde w$ which are now both solutions to \eqref{eq:parabolic} and 
	satisfy
	$$\min_{y\in\R^N}(\tilde w-\tilde u)(\tilde T,y)=
	(\tilde w-\tilde u)(\tilde T,\tilde y)=0,$$
 	where $\tilde T=\lim_{\eps\to0}T_\eps$ and $\tilde y=\lim_{\eps\to0}(\tilde y_\eps-k_\eps)$. 
	If $\tilde T>0$ then $\tilde w\equiv\tilde u$, otherwise we can only infer that
	$\tilde w\geq\tilde u$ for all times and that $(\tilde w-\tilde u)(0,\tilde y)=0$.
	In both cases, roughly the spreading speed of $\tilde{u}$ has to be less than that of $\tilde{w}$, which ultimately will contradict the inequality $c_2 < c_1$.

	More precisely, since $(\tilde Z_\eps+c_1+z_\eps+k_\eps\cdot e)_{\eps\in(0,\gamma)}$
	is bounded, we derive 
	$$\tilde u(0,\tilde y)=\tilde w(0,\tilde y)=
	\lim_{\eps\to0}w_\eps(0,\tilde y +k_\eps)=
	\lim_{\eps\to0}U_2 (\tilde y,\tilde Z_\eps+c_1+z_\eps+k_\eps\cdot e+\tilde y \cdot e),$$
	and thus $q_2 (\tilde y)<\tilde u(0,\tilde y)<q_1(\tilde y)$
	because $q_2<U_2<q_1$ thanks to Proposition \ref{prop:Fec}$(ii)$.
	Next, fix $c' \in(c_2,c_1)$ and consider a sequence $(h_m)_{m\in\N}$ satisfying
	$c'  m<h_m\cdot e<c_1 m$ for~$m$ larger than some~$m_0$. 
	From one hand, using \eqref{ueps} and the monotonicity of $\hat \alpha$ with respect to its second variable, we get
	$$\forall m\geq m_0,\ y\in\R^N,\quad
	u_\eps(m,y+h_m)\geq \hat \alpha ( y,c_1+z_\eps+(y+h_m)\cdot e-mc_1)\geq
	u_\eps(0,y),$$
	from which we deduce 
	\begin{equation}\label{eq:u>q2}
	\tilde u(m,\tilde y+h_m)\geq\tilde u(0,\tilde y)>q_2(\tilde y).
	\end{equation}
	From the other hand, \eqref{weps} yields
	$$\forall m\geq m_0,\quad	
	w_\eps(m,\tilde y+h_m+k_\eps)\leq 
	U_{2,\eps}(\tilde y,\tilde Z_\eps+c_1 +z_\eps+(k_\eps+ \tilde y) \cdot e+m(c'-c_2)),$$
	whence, letting $L>0$ be such that $\tilde Z_\eps+c_1+z_\eps+k_\eps\cdot e\geq-L$
	for all $\eps \in (0,\gamma)$, we find that
	$$\tilde w(m,\tilde y+h_m)\leq 
	U_2(\tilde y,-L+\tilde y \cdot e+m(c'-c_2)).$$
	The above right-hand side converges to $q_2(\tilde y)$ as $m\to+\infty$,
	and therefore, by \eqref{eq:u>q2},
	we derive for $m$ sufficiently large,
	$$\tilde w(m,\tilde y +h_m)<\tilde u(0,\tilde y)
	\leq \tilde u(m(z),\tilde y +h_m).$$
	This contradicts the inequality $\tilde w\geq\tilde u$, concluding the proof of the proposition.
\end{proof}

\section{To the continuous case}\label{sec:continuous}

In this section, we place ourselves under Assumption~\ref{ass:speeds} and either 
Assumption~\ref{ass:bi} or~\ref{ass:multi}. 
In both situations, we have constructed in the previous sections a `discrete' travelling front 
or terrace (i.e., a finite and appropriately ordered sequence of discrete travelling fronts)
in the sense of Definition~\ref{def:discrete_front}. 
Clearly our argument may be performed with any positive time step (not necessarily equal to 1), and thus we can consider a sequence of `discrete' terraces associated with the time steps $2^{-k}$,
$k\in\N$. 
By passing to the limit as $k \to +\infty$, we expect to recover an actual propagating terrace in the sense of Definition~\ref{def:terrace}.
\begin{rmk}
As we mentioned earlier, in some cases this limiting argument is not needed. Indeed, it is rather straightforward to show that a discrete travelling front, regardless of the time step, is also a generalized transition front in the sense of Berestycki and Hamel~\cite{BH12}; without going into the details, we recall that a transition front is an entire solution whose level sets remain at a bounded distance uniformly with respect to time. Under an additional  monotonicity assumption on the neighborhood of limiting stable steady states, and provided that the speed is not 0, they have proved that any almost planar transition front is also a pulsating travelling front. However, this is not true in general, therefore we proceed with a different approach.
\end{rmk}
For any direction $e \in \mathbb{S}^{N-1}$ and any $k \in \mathbb{N}$, 
the discrete terrace associated with the time step $2^{-k}$ consists of
 a finite sequence of ordered stable steady states
$$\bar p \equiv q_{0,k} > q_{1,k} > \cdots > q_{J(k),k} \equiv 0,$$
and a finite sequence of discrete travelling fronts
connecting these steady states with nondecreasing speeds.
Because the $(q_{j,k})_{0\leq j \leq J(k)}$ belong to the finite set of periodic stable steady states of \eqref{eq:parabolic}, we can extract from the sequence 
of time steps $(2^{-k})_{k\in\N}$ a subsequence 
$(\tau_k)_{k\in\N}$ along which
the family $(q_{j,\tau_k})_{0\leq j \leq J(\tau_k)}$ 
does not actually depend on $k$. 
Therefore, we simply denote it by $(q_j)_{0 \leq j \leq J}$.
Let $(U_{j,k})_{0 \leq j \leq J,\ k\in\N}$ be the corresponding fronts, i.e.,
the $U_{j,k} (y,z)$ are periodic in $y$, nonincreasing in~$z$
and satisfy
$$\forall (y,z) \in \mathbb{R}^{N+1}, \quad 
v(\tau_k,y; U_{j,k} (x, z + x \cdot e)) = U_{j,k} (y,z+y\cdot e - c_{j,k}),$$
with
$$c_{1,k} \leq c_{2,k}\leq \cdots \leq c_{J,k},$$
as well as 
$$U_{j,k} (\cdot, - \infty) \equiv q_{j-1} , \quad U_{j,k} (\cdot , + \infty) \equiv q_{j}.$$

As a matter of fact, the speeds $c_{j,k}$ 
are proportional to the time step $\tau_k$, by a factor depending on $j$.
This is the subject of the next lemma, whose proof exploits the link between the front and the spreading speed,
which is the heart of the method developed by Weinberger in~\cite{W02} 
and used in the present paper.
\begin{lem}\label{lem:speed_k1}
There exists a sequence
$$c_1 \leq c_2\leq \cdots \leq c_J$$
such that
$$ \forall j\in\{1,\dots, J\},\ k\in\N,\quad
c_{j,k}=\tau_k c_j.$$
\end{lem}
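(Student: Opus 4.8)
\emph{Proof plan.} Fix $j\in\{1,\dots,J\}$. For a time step $\tau>0$ let $\mathcal F^{(\tau)}_{e,c}$ denote the operator obtained from \eqref{eq:def_mapping} by replacing the time-$1$ evolution of \eqref{eq:parabolic} by its time-$\tau$ evolution, $c$ still being the shift performed in one step. Running the construction of Sections~\ref{sec:N}--\ref{sec:terrace} with this operator on the restriction of \eqref{eq:parabolic} to the order interval $[0,q_{j-1}]$ (which satisfies Assumption~\ref{ass:mix}, and in the multistable case also the linear stability and the counter\nobreakdash-propagation hypotheses, just as the full equation) produces $U_{j,k}$ as the uppermost discrete travelling front there; by Lemma~\ref{lem:U1} its shift per step is exactly the critical value $c^*_{\tau_k}:=\sup\{c\in\R:\ a^{(\tau_k)}_c\equiv q_{j-1}\}$, where $a^{(\tau)}_c$ comes from the iterative scheme $a^{(\tau)}_{c,n+1}=\max\{\phi,\mathcal F^{(\tau)}_{e,c}[a^{(\tau)}_{c,n}]\}$ for a profile $\phi$ as in~\eqref{ass:phi} (independent of $\phi$ by Lemma~\ref{lem:indep1}). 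Thus $c_{j,k}=c^*_{\tau_k}$, and the statement reduces to showing that $c^*_\tau=\tau\,c_j$ for some $c_j\in\R$ not depending on $\tau$.

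The idea is to identify $c^*_\tau$ with $\tau$ times the \emph{continuous-time} spreading speed of \eqref{eq:parabolic} restricted to $[0,q_{j-1}]$ in the direction $e$, which is manifestly an intrinsic quantity of the equation. Set $c_j:=\sup\{\rho\in\R:\ v(t,y;\phi(x,x\cdot e))\to q_{j-1}(y)$ locally uniformly in $\{y\cdot e\le\rho t\}$ as $t\to+\infty\}$; this is a well-defined real number because the flow is order-preserving and because of the exponential supersolutions used in the proof of Lemma~\ref{lem:ac1}$(ii)$ (which prevent $c_j=+\infty$), and by construction it does not involve any discretization. Iterating the definition of $\mathcal F^{(\tau)}_{e,c}$ exactly as in Lemma~\ref{lem:ac1} gives $(\mathcal F^{(\tau)}_{e,c})^n[\phi](y,z+y\cdot e-nc)=v(n\tau,y;\phi(x,z+x\cdot e))$, so each iterate is, up to the cumulative shift $nc$, a time-$n\tau$ snapshot of the continuous flow from front-like data. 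Combining this with the bound $a^{(\tau)}_{c,n}\ge(\mathcal F^{(\tau)}_{e,c})^n[\phi]$ (valid because the scheme only takes maxima with $\phi$) and with the invasion of the frame $\{y\cdot e\le(c/\tau)t\}$ when $c/\tau<c_j$, one gets, for such $c$, that $a^{(\tau)}_{c,n_0}(y,z_0)>\phi(y,-\infty)$ for some $n_0\in\N$, $z_0>0$ and all $y\in[0,1]^N$; the equivalence $(i)\Leftrightarrow(iii)$ of the time-$\tau$ analogue of Lemma~\ref{lem:ac2} then yields $a^{(\tau)}_c\equiv q_{j-1}$, whence $c^*_\tau\ge\tau c_j$. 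For the reverse inequality one compares the iterates $a^{(\tau)}_{c,n}$ \emph{from above} with the continuous flow in the same frame and invokes that, for $\rho>c_j$, the continuous solution from front-like data fails to converge to $q_{j-1}$ along $\{y\cdot e\gtrsim\rho t\}$ — this is precisely the spreading half of Weinberger's argument \cite{W02} — to conclude that $a^{(\tau)}_c\not\equiv q_{j-1}$ for $c>\tau c_j$, i.e. $c^*_\tau\le\tau c_j$. Hence $c_{j,k}=c^*_{\tau_k}=\tau_k c_j$. Dividing the chain $c_{1,k}\le c_{2,k}\le\cdots\le c_{J,k}$, established in Section~\ref{sec:terrace}, by $\tau_k>0$ then gives $c_1\le c_2\le\cdots\le c_J$ and completes the proof.

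The main obstacle is the sharp identification $c^*_\tau=\tau c_j$, and specifically the upper bound $c^*_\tau\le\tau c_j$. A purely algebraic ``$m$ steps of length $\tau$ equal one step of length $m\tau$'' argument is blocked by the maximum with $\phi$ interleaved at every iteration, so one is forced to work with the genuine iterates $(\mathcal F^{(\tau)}_{e,c})^n[\phi]$ (for which the maximum disappears once $a^{(\tau)}_{c,n}>\phi$, by Lemma~\ref{lem:ac2}) and to sandwich the scheme between front-like sub- and supersolutions, mirroring the estimates of Section~\ref{sec:ac1} and the spreading analysis underlying Theorem~\ref{thm:mono}. One must also be careful with signs when $c_j\le0$: the statements about the solution being ``ahead of'' or ``behind'' the invading front then require the quantitative decay estimate~\eqref{acn<1} and its counterpart from below, rather than any a priori positivity of the speed.
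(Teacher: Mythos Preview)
Your route differs from the paper's: you pass through a continuous-time spreading speed $c_j$ and try to show $c^*_\tau=\tau c_j$, whereas the paper stays purely discrete and compares the schemes at the two nested time steps $\tau_k=\rho\tau_{k+1}$ via the identity $(\mathcal F^{(\tau_{k+1})}_{e,c})^\rho=\mathcal F^{(\tau_k)}_{e,\rho c}$.

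The gap is in your upper bound $c^*_\tau\le\tau c_j$. You propose to ``compare the iterates $a^{(\tau)}_{c,n}$ from above with the continuous flow'', but that inequality runs the wrong way: because of the interleaved maxima with $\phi$, one has $a^{(\tau)}_{c,n}\geq(\mathcal F^{(\tau)}_{e,c})^n[\phi]$, not $\leq$. Your observation that ``the maximum disappears once $a^{(\tau)}_{c,n}>\phi$'' applies only when $c<c^*_\tau$, whereas for the upper bound you must show that $c>\tau c_j$ forces $c\geq c^*_\tau$, and in that regime you cannot drop the max a priori. Nor does Weinberger's spreading analysis close the gap: it is a monostable result, while the restriction of~\eqref{eq:parabolic} to $[0,q_{j-1}]$ is generally not monostable, so the claimed ``failure to converge to $q_{j-1}$ along $\{y\cdot e\gtrsim\rho t\}$'' is not available without essentially reproving what is at stake.

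The paper's device for the hard inequality is to neutralise the extra maxima by exploiting Lemma~\ref{lem:indep1}. It initialises the coarse scheme with a $\phi$ whose left limit equals $u(T,\cdot)$ for a solution $u$ of~\eqref{eq:parabolic} already attracted to $q_{j-1}$, and the fine scheme with a $\phi'$ having $\phi'(\cdot,-\infty)=u(0,\cdot)$ and supported far enough to the left. Then the continuous evolution from $\phi$ remains above $\phi'$ throughout every coarse step, so the intermediate maxes with $\phi'$ in the fine scheme are inactive, giving $a^{k+1}_{c,\rho n}\leq a^{k}_{\rho c,n}$ and hence $\rho c_{1,k+1}\leq c_{1,k}$. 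This is the key idea missing from your outline.
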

\begin{proof}
The proof amounts to showing that 
$$ \forall j\in\{1,\dots, J\},\ k\in\N,\quad
\frac{c_{j,k+1}}{c_{j,k}}=\frac{\tau_{j,k+1}}{\tau_{j,k}}.$$
We do it for $j=1$. 
Then, since the intermediate states $q_{i}$ do not depend on $k$, and because the subsequent speeds were constructed in a similar fashion, 
the case $j>1$ is analogously derived.

Let us first show that $\frac{c_{1,k+1}}{c_{1,k}}\geq \frac{\tau_{1,k+1}}{\tau_{1,k}}$.
This easily follows from our earlier construction. 
Let us consider the shifted evolution operators associated with the time steps $(\tau_k)_{k\in\N}$.
In analogy with \eqref{eq:def_mapping1}, these are defined by 
$$\mathcal{F}_{e,c,k} [V] (y,z+y\cdot e-c) := v (\tau_k,y; V(x,z+ x \cdot e)).$$
Then, for $\phi$ satisfying \eqref{ass:phi}, we define the sequence
$(a_{c,n}^k)_{n\in\N}$ through \eqref{eq:def_acn} with 
$\mathcal{F}_{e,c}$ replaced by $\mathcal{F}_{e,c,k}$.
Fix $k\in\N$ and call $\rho:=\frac{\tau_k}{\tau_{k+1}} \in \mathbb{N}^*$.
Because
$$(\mathcal{F}_{e,c,k+1})^\rho [V] (y,z+y\cdot e) = v (\rho\tau_{k+1},y; V(x,z+ x \cdot e+\rho c))
=\mathcal{F}_{e,\rho c,k} [V] (y,z+y\cdot e),$$
we find that
$a_{c,\rho}^{k+1} \geq a_{\rho c,1}^k$,  where the inequality comes from the fact that
in the time step $\tau_k$
the sequence $(a_{c,n}^{k+1})_{n\in\N}$ is `boosted' 
$\rho$ times by the function $\phi$, 
while $(a_{\rho c,n}^k)_{n\in\N}$ only once. We can readily iterate this argument to get
that $a_{c,\rho n}^{k+1} \geq a_{\rho c,n}^k$ for all $n\in\N$.
It then follows from Lemma~\ref{lem:ac2} that if $\rho c<c_{1,k}$ then $c<c_{1,k+1}$
This means that $\rho c_{1,k+1}\geq c_{1,k}$, which is the first desired inequality.

To prove the reverse inequality,
we shall use Lemma~\ref{lem:indep1} 
which asserts that $c_{1,k}$ does not depend on the choice of $\phi$ satisfying 
\eqref{ass:phi}. 
Then we choose the function generating the sequences 
$(a_{c,n}^k)_{n\in\N}$, $(a_{c,n}^{k+1})_{n\in\N}$ of a particular form.
Namely, we consider a solution $u$ of the Cauchy problem associated with
\eqref{eq:parabolic}, with a continuous periodic initial datum $u_0<\bar p$ such that $u_0-\delta$ lies in the  
basin of attraction of $\bar p$, for some constant $\delta>0$.
In particular, there exists $T>0$ such that $u(t,\cdot)>u_0$ for $t\geq T$.
We then initialize $(a_{c,n}^k)_{n\in\N}$ with a function 
$\phi$ satisfying $\phi(y,-\infty)=u(T,y)$.
It follows that $v(t,y;\phi(x,-\infty))>u_0(y)$ for all $t\geq0$, and thus,
by parabolic estimates, 
$$\forall 0\leq t\leq\tau_k,\  
y\in[0,1]^N,\quad
v(t,y;\phi(x,z+x\cdot e))>u_0(y),$$
provided $z$ is smaller than some $Z$.
Then, by the periodicity of $\phi$ and $u_0$, we get
$$\forall 0\leq t\leq\tau_k,\  
z+y\cdot e\leq Z-\sqrt{N},\quad
v(t,y;\phi(x,z+x\cdot e))>u_0(y).$$
We now initialize $(a_{c,n}^{k+1})_{n\in\N}$
with a function $\phi'$ satisfying 
$$\phi'(y,-\infty)=u(0,y),\qquad
\phi'(y,z)=0\ \text{ for }z\geq Z-\sqrt{N}-\rho|c|.$$
We deduce that
$$\forall j=1,\dots,\rho,\ (y,z)\in\R^{N+1},\quad
\phi' (y,z+y\cdot e- \rho|c|)\leq v (j\tau_{k+1},y; \phi (x, z + x \cdot e)).$$
We claim that $a_{c,\rho n}^{k+1}\leq a_{\rho c,n}^k$ for all $n\in\N$.
This property holds for $n=0$. Suppose that it holds for some $n\in\N$.
Using the property of $\phi'$, and recalling that 
$\phi\leq a_{\rho c,n}^k$, we find that
\[\begin{split}
a_{c,\rho n+1}^{k+1}(y,z+y\cdot e- c) &= 
\max \{ \phi' (y,z+y\cdot e- c), v (\tau_{k+1},y; a_{c,\rho n}^{k+1} (x, z + x \cdot e))\}\\
&\leq v (\tau_{k+1},y; a_{\rho c,n}^k(x, z + x \cdot e)).
\end{split}\]
Iterating $\rho$ times we get
\[\begin{split}
a_{c,\rho(n+1)}^{k+1}(y,z+y\cdot e- \rho c) &= 
\max \{ \phi' (y,z+y\cdot e-\rho c),\\
&\qquad\qquad v ( \tau_{k+1},y; a_{c,\rho n+\rho-1}^{k+1} (x, z + x \cdot e-(\rho-1) c))\}\\
&\leq v (\rho \tau_{k+1},y; a_{\rho c,n}^k(x, z + x \cdot e))\\
&= \mathcal{F}_{e,\rho c,k}[a_{\rho c,n}^k](y, z + y \cdot e-\rho c)\\
&\leq a_{\rho c,n+1}^k(y, z + y \cdot e-\rho c).
\end{split}\]
The claim $a_{c,\rho n}^{k+1}\leq a_{\rho c,n}^k$ is thereby proved for all $n\in\N$.
Then, as before, owing to Lemma~\ref{lem:ac2} we conclude that
$c_{1,k}\geq \rho c_{1,k+1}$.
\end{proof}
We are now in a position to conclude the proofs of Theorems \ref{th:bi} and \ref{th:multi}.
Namely, in the next lemma we show that for each
level $1 \leq j \leq J$ 
of the discrete propagating terrace one can find a continuous propagating terrace
whose fronts have the same speed $c_j$ from Lemma~\ref{lem:speed_k1}.
Then, by `merging' the so obtained 
$J$ terraces, one gets a propagating terrace 
of \eqref{eq:parabolic} connecting $\bar p$ to~$0$. 
In the bistable case, the terrace reduces to a single pulsating travelling front,
thanks to Assumptions \ref{ass:bi} and~\ref{ass:speeds}.
Instead, in the multistable case, our construction
allows the possibility that the continuous propagating terrace contains more fronts than the discrete terraces did. This is actually not true in typical situations 
(such as the already mentioned ones where the argument of Berestycki and Hamel
\cite{BH12} applies), 
but it remains unclear whether this can happen in general.
\begin{lem}
For any $1 \leq j \leq J$, there exists a propagating terrace connecting $q_{j-1}$ to~$q_j$
in the sense of Definition \ref{def:terrace}. 
Moreover, all the fronts in this terrace have the speed~$c_j$.
\end{lem}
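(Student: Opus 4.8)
The plan is to fix a level $j\in\{1,\dots,J\}$, pass to the limit $k\to\infty$ in the discrete fronts $U_{j,k}$, which all connect $q_{j-1}$ to $q_j$ with speed $c_{j,k}=\tau_k c_j$ by Lemma~\ref{lem:speed_k1}, and recover in the limit a continuous pulsating front of speed $c_j$; then, chaining finitely many such limits, one fills the whole range $[q_j,q_{j-1}]$. First I would normalize: using that $U_{j,k}(\cdot,\cdot+s)$ is again a discrete front for every $s\in\R$, and that $U_{j,k}(0,\cdot)$ is nonincreasing and runs from $q_{j-1}(0)$ to $q_j(0)$, I fix a value $\mu$ with $q_j(0)<\mu<q_{j-1}(0)$ and translate $U_{j,k}$ in its second variable so that $\sup\{z:U_{j,k}(0,z)\ge\mu\}=0$. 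Then I consider $u_k(t,y):=v(t,y;U_{j,k}(x,x\cdot e))$, which for $t>0$ is a genuine solution of \eqref{eq:parabolic} with $0\le u_k\le\bar p$ and satisfies $u_k(m\tau_k,y)=U_{j,k}(y,y\cdot e-m\tau_k c_j)$ for $m\in\N$, after iterating the discrete-front relation; by the monotonicity of $U_{j,k}$ in its second argument, $(u_k(m\tau_k,\cdot))_m$ is monotone. By parabolic estimates, along a subsequence $u_k\to\tilde u$ in $C^{1,2}_{loc}((0,\infty)\times\R^N)$, with $\tilde u$ a solution of \eqref{eq:parabolic}, monotone in time.

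The crux is to show that $\tilde u$ has the pulsating form $\tilde u(t,x)=\hat U(x,x\cdot e-c_j t)$ for some continuous $\hat U$, periodic in its first variable and nonincreasing in the second. Assuming $c_j\neq0$ (the case $c_j=0$ is treated directly: each $x\mapsto U_{j,k}(x,z+x\cdot e)$ is then a stationary solution, and the relaxed limit of Lemma~\ref{lem:diagonal} produces a standing front in the sense of the remark after Theorem~\ref{th:bi}), this reduces to the lattice invariance $\tilde u(t,y+L)=\tilde u(t-L\cdot e/c_j,y)$ for every $L\in\Z^N$ and $t$ large. Using the periodicity of \eqref{eq:parabolic} and of $U_{j,k}$ one gets $u_k(m\tau_k,y+L)=U_{j,k}(y,y\cdot e+L\cdot e-m\tau_k c_j)$; choosing $m'$ with $m'\tau_k\to t-L\cdot e/c_j$, the $z$-arguments $y\cdot e+L\cdot e-m\tau_k c_j$ and $y\cdot e-m'\tau_k c_j$ differ by less than $|c_j|\tau_k\to0$, and since $U_{j,k}(y,\cdot)$ is merely nonincreasing (no uniform continuity in $z$ is available) I sandwich $U_{j,k}(y,y\cdot e+L\cdot e-m\tau_k c_j)$, using only this monotonicity, between $u_k((m'-1)\tau_k,y)$ and $u_k((m'+1)\tau_k,y)$, both of which converge to $\tilde u(t-L\cdot e/c_j,y)$ as $k\to\infty$. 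Lattice invariance then makes $\hat U(x,z):=\tilde u\big((x\cdot e-z)/c_j,x\big)$ well defined (extend by periodicity in $x$ where $(x\cdot e-z)/c_j$ is not positive), continuous and periodic in $x$, and nonincreasing in $z$ because the $u_k$ are monotone along the time lattice; so $\tilde u$ extends to an entire pulsating front $\hat u(t,x)=\hat U(x,x\cdot e-c_j t)$ of speed $c_j$. Its limits $\hat U(\cdot,\mp\infty)=:\tilde q_\mp$ are periodic steady states with $q_j\le\tilde q_+\le\tilde q_-\le q_{j-1}$, and, comparing $u_k$ at lattice times on either side of $t=0$, the normalization forces $\hat U(0,0)=\mu$; for a level $\mu$ chosen outside the (small) set of values $p(0)$ of periodic steady states $q_j\le p\le q_{j-1}$, this gives $\tilde q_-\neq\tilde q_+$, so $\hat u$ is a genuine pulsating front.

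Next, $\tilde q_-$ and $\tilde q_+$ are stable: since $\hat u\le\tilde q_-$ and $\hat u(t,\cdot)\to\tilde q_-$ monotonically as $t$ goes to the appropriate infinity, if $\tilde q_-$ were unstable then, by the Dancer--Hess trichotomy and the absence of periodic steady states between $\tilde q_-$ and the stable states immediately above and below it (the same discussion as after Theorem~\ref{thm:mono}), $\tilde q_-$ would be unstable from below as well, contradicting the existence of a solution converging to it increasingly from below; likewise for $\tilde q_+$. If $\tilde q_-\equiv q_{j-1}$ and $\tilde q_+\equiv q_j$ we are done. Otherwise, say $\tilde q_-<q_{j-1}$, I repeat the construction with $U_{j,k}$ re-normalized at a level $\mu'\in(\tilde q_-(0),q_{j-1}(0))$; since the profile $\hat U$ already obtained caps at $\tilde q_-(0)<\mu'$, this re-normalization amounts to a shift tending to $-\infty$, and the new limit is a pulsating front of speed $c_j$ whose bottom state is exactly $\tilde q_-$ and whose top state is some stable $\tilde q^{(1)}_-$ with $\tilde q_-<\tilde q^{(1)}_-\le q_{j-1}$. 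Iterating upward from $\tilde q_-$ and symmetrically downward from $\tilde q_+$, and using that there are only finitely many stable periodic steady states between $q_j$ and $q_{j-1}$, one obtains after finitely many steps a finite chain of pulsating fronts, all of speed $c_j$, connecting $q_{j-1}$ down to $q_j$; their speeds being trivially ordered, this is the desired propagating terrace. (In the bistable case Assumption~\ref{ass:bi} leaves no intermediate steady state, so $J=1$ and the terrace reduces to the single pulsating front of Theorem~\ref{th:bi}.)

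I expect the main obstacle to be the discrete-to-continuous passage, specifically the lattice-invariance argument above, where the incommensurability of $L\cdot e$ with $\tau_k c_j$ must be absorbed using only the monotonicity of $U_{j,k}$ in $z$ together with the time-continuity of the limit; a secondary delicate point is the bookkeeping in the re-normalization step — ruling out degenerate limits, identifying the top and bottom states of the re-centred profiles so that consecutive fronts chain up, and checking termination — which is where the finiteness of the stable set and the Dancer--Hess structure of the steady states re-enter. The remaining ingredients, namely the parabolic compactness, the steady-state asymptotics of $\hat U$ at $\pm\infty$, and the bistable specialization, are routine.
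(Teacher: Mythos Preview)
Your discrete-to-continuous passage---passing to a parabolic limit of the solutions $u_k$ and then proving the lattice invariance $\tilde u(t,y+L)=\tilde u(t-L\cdot e/c_j,y)$ by a monotonicity sandwich---is a reasonable alternative to the paper's route. The paper instead exploits that, because $\tau_{k'}/\tau_k\in\N$, each $U_{j,k}$ is a fixed point of \emph{all} the coarser operators $\mathcal{F}_{e,\tau_{k'}c_j,k'}$ simultaneously; it then takes the relaxed limit of Lemma~\ref{lem:diagonal} to get a single profile $U_1$ which is a fixed point of every $\mathcal{F}_{e,\tau_k c_j,k}$, and deduces $u(t,y)=U_1(y,y\cdot e-c_jt)$ from the density of $\{m\tau_k\}$ in $[0,\infty)$. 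The two routes arrive at the same object.

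There is, however, a genuine gap in your stability step. Your argument reads: ``if $\tilde q_-$ were unstable, then by Dancer--Hess $\tilde q_-$ is unstable from below, contradicting the existence of a solution converging to it increasingly from below''. The solution you have in mind is the front $\hat u$, but $\hat u$ is \emph{not} spatially periodic, and the Dancer--Hess instability you derive is a statement about the basin of attraction in the class of periodic initial data. There is no contradiction between ``every periodic datum slightly below $\tilde q_-$ flows to $p_-$'' and ``a non-periodic front approaches $\tilde q_-$ as $z\to-\infty$''. The paper does not argue this way: it normalizes with a function $\eta$ lying in the basin of attraction of $q_{j-1}$ (conditions \eqref{U1k>}--\eqref{U1k<}), which forces the top limit to be exactly $q_{j-1}$ without any stability argument, and then shows the \emph{bottom} limit $p^*$ is stable by the counter-propagation Assumption~\ref{ass:speeds}, repeating verbatim the spreading-speed comparison of Lemma~\ref{lem:U1} (one gets $c_j\ge\overline c_{p^*}$ and, after re-normalizing near $p^*$, $c_j\le\underline c_{p^*}$, contradicting $\overline c_{p^*}>\underline c_{p^*}$). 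Your middle-value normalization at $\mu$ loses this leverage and forces you to treat both endpoints by an argument that, as written, does not close.

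A secondary issue is your non-triviality step: you choose $\mu$ outside the set $\{p(0):q_j\le p\le q_{j-1},\ p\text{ periodic steady state}\}$, but Assumptions~\ref{ass:bi}/\ref{ass:multi} only bound the number of \emph{stable} steady states, so that set need not be small. The paper's normalization avoids this entirely, since \eqref{U1k<} directly prevents the limit from collapsing to $q_{j-1}$.
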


\begin{proof}
The aim is to pass to the limit as $k \to +\infty$ in the sequence of
discrete terraces associated to the time steps $(\tau_k)_{k\in\N}$.
The first step consists in showing 
that the profiles $U_{j,k}$ converge as $k \to +\infty$. 
Due to the lack of regularity with respect to the second variable,
the limit will be taken in the relaxed sense of Lemma \ref{lem:diagonal}.

As usual, the argument is the same regardless of the choice of $j$
and then for simplicity of notation we take $j=1$. 
Beforehand, we shift $U_{1,k}$ so that
\Fi{U1k>}
\forall z<0,\quad
\min_{y\in[0,1]^N} \big(U_{1,k}(y,z+y\cdot e) - \eta (y)\big) \geq 0,
\Ff
\Fi{U1k<}
\min_{y\in[0,1]^N}\big(U_{1,k}(y,y\cdot e) -\eta(y)\big) \leq 0,
\Ff
where $q_1<\eta< \bar p$ is a given function lying in the basin of attraction of $\bar p$.
We know that~$U_{1,k}$ is a fixed point for $\mathcal{F}_{e,c_{1,k},k}$ by construction,
that is, it is a fixed point for $\mathcal{F}_{e,\tau_{k}c_1,k}$
owing to the previous lemma. Then, for $k'<k$, observing that
$$\mathcal{F}_{e,\tau_{k'}c_1,k'} = 
\left( \mathcal{F}_{e,\tau_{k}c_1,k}\right)^{\frac{\tau_{k'}}{\tau_k}},$$
where $\frac{\tau_{k'}}{\tau_k}\in\N$, 
we see that it is a fixed point for $\mathcal{F}_{e,\tau_{k'}c_1,k'}$ too.

We now apply Lemma~\ref{lem:diagonal} 
to the sequence $(U_{1,k})_{k\in\N}$. We point out that the hypothesis
there that $U_{1,k}(y,z + y\cdot e)$ is uniformly 
continuous in $y\in\R^N$, uniformly with respect to~$z$ and~$k$,
follows from parabolic estimates due to the fact that all the $U_{1,k}$ are fixed points of 
$\mathcal{F}_{e,\tau_{1}c_1,1}$.
We obtain in the relaxed limit (up to subsequences)
a function $U_1 (y,z)$ which is periodic in~$y$, nonincreasing in~$z$ and such that $U_1 (y,z + y\cdot e)$ is uniformly continuous in~$y$, uniformly with respect to $z$. Moreover,~$U_1$ satisfies the normalization
\eqref{U1k>}-\eqref{U1k<}.
Finally, by the above consideration,
it also follows from Lemma~\ref{lem:diagonal} and the continuity of the operators~$\mathcal{F}_{e,\tau_{k}c_1,k}$ in the locally uniform topology, that $U_1$ fulfils
$$\forall k\in\N,\quad
\mathcal{F}_{e,\tau_{k}c_1,k} [U_1] \equiv U_1.$$
Let $u(t,y)$ denote the solution of the problem~\eqref{eq:parabolic} 
with initial datum~$U_1 (y, y \cdot e)$. 
Then for any $k,m \in \mathbb{N}$, we have that
$$u(m\tau_{k},y) = \left( \mathcal{F}_{e,\tau_{k}c_1,k}\right)^m [U_1] (y,y \cdot e - m\tau_{k}c_1) 
= U_1 (y,y \cdot e - m\tau_{k}c_1).$$
By continuity of the solution of~\eqref{eq:parabolic} with respect to time, as well as the monotonicity of~$U_1$ with respect to its second variable, we immediately extend this inequality to all positive times, i.e.,
$$u( t,y) = U_1 (y,y \cdot e - c_1 t) .$$
In particular, $U_1 (y, y\cdot e -c_1 t)$ solves \eqref{eq:parabolic} for positive times in the whole space; by periodicity in the first variable, it is straightforward to check that it solves \eqref{eq:parabolic} for negative times too.
\begin{rmk}
We have shown above that $U_1$ is continuous with respect to both its variables, 
on the condition that $c_1 \neq 0$.
\end{rmk}

To show that $U_1 (y, y \cdot e- c_1 t)$ is a pulsating travelling front in the sense of Definition~\ref{def:puls_front}, it only remains to check that it satisfies the appropriate asymptotic. By monotonicity in the second variable, we already know that $U_1 (\cdot , \pm \infty)$ exist, and moreover these limits are periodic steady states of \eqref{eq:parabolic}. 
We further have that $U_1 (\cdot, -\infty) \geq \eta $ and $U_1 (\cdot, +\infty)\not\equiv\bar p $, because 
$U_1$ satisfies~\eqref{U1k>}-\eqref{U1k<}. Recalling that $\eta$ lies in the basin of attraction of $\bar p$, we find that
$U_1 (\cdot,-\infty) \equiv \bar p$.

Let us now deal with the limit as $z \to +\infty$. Let us call $p^*:=U_1 (\cdot, + \infty)$. This is a periodic
steady state satisfying $q_1\leq p^*<\bar p $; 
however it could happen that the first inequality is strict too. 
We claim that $p^*$~is stable. In which case, changing the normalization \eqref{U1k>}-\eqref{U1k<}
by taking $\eta<p^*$ in the basin of attraction of $p^*$,
and then passing to the limit as before, we end up with a new function $U_2$.
Because of this normalization, together with the fact that
$U_1(\cdot,+\infty)=p^*$, it turns out that $U_2$
connects $p^*$ to another steady state $p^*_2\geq q_1$.
Then, by iteration, we eventually construct a terrace connecting $\bar p$ to $q_1$.

It remains to show that $p^*$ is stable.
We proceed by contradiction and assume that this is not the case. In particular, $p^* > q_1$. 
Let $p_{+}$,~$p_{-}$ denote respectively the smallest stable periodic steady state above $p^*$
and the largest stable periodic steady state below~$p^*$, and let
$\overline{c}_{p^*}$ and $\underline{c}_{p^*}$ be the minimal speeds of fronts connecting $p_{+}$ to~$p^*$
and $p^*$ to~$p_{-}$ respectively.
By the same comparison argument as in the proof of Lemma~\ref{lem:U1}
one readily sees that the speed $c_1$ of~$U_1$ satisfies
$$c_1 \geq \overline{c}_{p^*}.$$
We recall that the argument exploits Weinberger's result in~\cite{W02}
which asserts that $\overline{c}_{p^*}$ coincides with the
spreading speed for solutions between $p^*$ and $p_{+}$.
Next, one shows that 
$$c_1 \leq \underline{c}_{p^*}.$$
This is achieved by choosing the normalization 
$$
\forall z<0,\quad
\max_{y\in[0,1]^N} \big(U_{1,k}(y,z+y\cdot e) - \eta^* (y)\big) \geq 0,
$$
$$
\max_{y\in[0,1]^N}\big(U_{1,k}(y,y\cdot e) - \eta^*(y)\big) \leq 0,$$
with $\eta^*$ between $p_{-}$ and $p^*$ and in the basin of attraction of $p_{-}$, which is possible because
$U_{1,k}(\cdot,+\infty)\equiv q_1\leq p_{-}$.
One gets in the (relaxed) limit a solution $U^* (y, y\cdot e -c_1 t)$
satisfying $U^*(\cdot,-\infty)\leq p^*$ (because compared with $U_1$, the function $U^*$ is obtained as the limit of an infinite shift of the sequence $U_{1,k}$), as well as $U^*(\cdot,+\infty)\leq \eta^*$. Then the desired inequality follows again from the spreading result.
Finally, combining the previous two inequalities 
one gets $\overline{c}_{p^*}\leq \underline{c}_{p^*}$, which 
contradicts our Assumption~\ref{ass:speeds}. This concludes the proof.
\end{proof}
\begin{rmk}
As pointed out in Remark~\ref{rmk:trap}, in general it is not equivalent to find a function~$U_1$ as above and a pulsating front solution.
The function $U_{1}$ constructed above actually gives rise to a whole family of pulsating fronts $U_1 (x, x \cdot e + z - c_1 t)$. In the case when $c_1 \neq 0$, then this family merely reduces to the time shifts of a single front. In the case when $c_1 = 0$, however, it is much less clear how these fronts are related to each other: as observed earlier the function $U_1 (x,z)$ may be discontinuous with respect to $z$, hence the resulting family may not be a continuum of fronts (in general, it is not).
\end{rmk}


\section{Highly non-symmetric phenomena}\label{sec:asymmetric}

It is clear that, because equation \eqref{eq:parabolic} is heterogeneous, the  terrace
$((q_{j})_j, (U_j)_j)$
provided by Theorem~\ref{th:multi} depends in general on the direction $e$.
In this section, we shall go further and exhibit an example where not only the fronts $U_j$, but also the
intermediate states $q_{j}$ and even their number, i.e., the number of `floors' 
of the terrace, change when~$e$ varies.
Obviously this cannot happen in the bistable case where the stable 
steady states reduce to $\bar p , 0$. Namely, we prove
Proposition~\ref{prop:asymmetric}.

The main idea is to stack a heterogeneous bistable problem below
an homogeneous one. Then in each direction there exists an ordered pair of 
pulsating travelling fronts. 
Whether this pair forms a propagating terrace depends on the order of their speeds.
If the latter is admissible for a terrace, that is, if the uppermost front is 
not faster than the lowermost,
then the terrace will consists of the two fronts, otherwise it will reduce to a single front.
Since those speeds are given respectively by a function 
$\mathbb{S}^{N-1}\ni e \mapsto c(e)$ and by a constant $c$,
and since the heterogeneity should make such a function 
nonconstant, it should be possible to end up with a case 
where the number of fronts of the terrace is nonconstant too.

Owing to the above consideration, the construction essentially amounts to finding a
heterogeneous bistable problem for which the speed of the pulsating travelling front $c(e)$
is nonconstant in $e$.
While such property should be satisfied by a broad class of problems (perhaps even generically),
getting it in the context of a bistable equation (in the sense of Assumption~\ref{ass:bi})
is rather delicate. We were not able to find an example of this type in the literature.

We place ourselves in dimension $N=2$ and denote a generic point in $\R^2$ by
$(x,y)$, as well as $e_1:=(1,0)$, $e_2:=(0,1)$. 
We derive the following.
\begin{prop}\label{pro:speeds}
	There exists a function $f_1=f_1 (y,u)$ which is periodic in the variable $y\in\R$,
	satisfies Assumptions~\ref{ass:bi},~\ref{ass:speeds} with $\bar p\equiv1$,
	and for which the equation
	\Fi{eq:f1}
	\partial_t u = \Delta u + f_1(y,u), \quad t \in \R, \ (x,y) \in \R^2 ,
	\Ff
	admits a unique (up to shifts in time) pulsating travelling front 
	connecting $1$ to $0$ for any given direction $e\in\mathbb{S}^{1}$.
	
	Furthermore, the corresponding speeds $c(e)$ satisfy $c(e_1)>c(e_2)>0$.	
\end{prop}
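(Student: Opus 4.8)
The plan is to take for $f_1$ a small perturbation of a homogeneous bistable reaction and to extract the ordering of the two speeds from a second-order expansion in the perturbation parameter, the anisotropy being caused by the fact that, for the front propagating along the layers (direction $e_1$) the transverse variable enters only through a second-order elliptic operator, while for the front propagating across the layers (direction $e_2$) it enters through a first-order transport term as well. Concretely, I would fix a smooth homogeneous bistable $f_0$ with $f_0(0)=f_0(\theta)=f_0(1)=0$, $f_0<0$ on $(0,\theta)$, $f_0>0$ on $(\theta,1)$, $f_0'(0),f_0'(1)<0<f_0'(\theta)$ and $\int_0^1 f_0>0$, so that the homogeneous front $U_0$ (with $U_0(-\infty)=1$, $U_0(+\infty)=0$, $U_0'<0$) has speed $c_0>0$ (cf.~\cite{AW}); I would moreover require $\|f_0'\|_{L^\infty}<(2\pi)^2$, which forces (test the stationary equation against $U$ minus its mean and use the Poincaré inequality on $[0,1]^2$) that $f_0$ has no non-constant periodic steady state. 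Then set $f_1=f_1^\eps(y,u):=f_0(u)+\eps\,g(y,u)$, with $g$ smooth, $1$-periodic in $y$, $g(\cdot,0)\equiv g(\cdot,1)\equiv0$, and of zero $y$-average ($\int_0^1 g(y,u)\,dy=0$ for all $u$); a convenient form is $g(y,u)=h(y)\rho(u)$ with $\int_0^1 h=0$ and $\rho$ compactly supported in $(0,1)$. For $|\eps|$ small, a compactness argument together with the implicit function theorem near $0,\theta,1$ shows that the periodic steady states of~\eqref{eq:f1} are exactly $0$, $1$ and one intermediate state $\theta_\eps$ (close to $\theta$, linearly unstable since its principal eigenvalue is close to $f_0'(\theta)>0$); hence Assumption~\ref{ass:bi} holds with $\bar p\equiv1$, and, $\theta_\eps$ being linearly unstable, Proposition~\ref{prop:counter} yields $\overline c_{\theta_\eps}>0>\underline c_{\theta_\eps}$, so Assumption~\ref{ass:speeds} holds too.

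Next I would set up the two reduced front problems. By Theorem~\ref{th:bi} a pulsating front connecting $1$ to $0$ exists in every direction $e\in\mathbb S^1$ and is monotone in time; a sliding argument then gives uniqueness up to time-shift, with a unique speed $c^\eps(e)$. Since~\eqref{eq:f1} is invariant under translations in $x$, this uniqueness forces, for $\eps$ small (so that $c^\eps(e_1),c^\eps(e_2)\neq0$), the front in direction $e_1=(1,0)$ to be of the form $u(t,x,y)=\Psi_\eps(x-c^\eps(e_1)t,y)$ with $\Psi_\eps$ $1$-periodic in $y$ and
$$\partial_{zz}\Psi_\eps+\partial_{yy}\Psi_\eps+c^\eps(e_1)\,\partial_z\Psi_\eps+f_0(\Psi_\eps)+\eps\,g(y,\Psi_\eps)=0,$$
and the front in direction $e_2=(0,1)$ to be $x$-independent, $u(t,x,y)=W_\eps(y,y-c^\eps(e_2)t)$ with $W_\eps$ $1$-periodic in its first variable and
$$\partial_{zz}W_\eps+2\,\partial_{yz}W_\eps+\partial_{yy}W_\eps+c^\eps(e_2)\,\partial_z W_\eps+f_0(W_\eps)+\eps\,g(y,W_\eps)=0,$$
both with limits $1$ and $0$ as $z\to\mp\infty$, and both reducing to $U_0$ with speed $c_0$ at $\eps=0$. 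Linearising these bistable front equations in exponentially weighted Hölder spaces, in which they are Fredholm of index $0$ with one-dimensional kernel spanned by $\partial_z U_0$, the implicit function theorem shows that $\eps\mapsto c^\eps(e_i)$ and the associated profile are smooth near $\eps=0$.

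Then I would carry out the expansion. Differentiating at $\eps=0$ and averaging over $y\in[0,1]$, the zero $y$-average of $g$ gives $\frac{d}{d\eps}c^\eps(e_i)\big|_0=0$, while the first-order profile corrections $\dot\Psi,\dot W$ (normalised to zero $y$-mean) solve
$$\partial_{zz}\dot\Psi+\partial_{yy}\dot\Psi+c_0\partial_z\dot\Psi+f_0'(U_0)\dot\Psi=-g(y,U_0),\qquad \partial_{zz}\dot W+2\partial_{yz}\dot W+\partial_{yy}\dot W+c_0\partial_z\dot W+f_0'(U_0)\dot W=-g(y,U_0),$$
which involve genuinely different operators — in the coordinates $(z,\,z-y)$ the second one lacks the second transverse derivative — so that $\dot W\neq\dot\Psi$. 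Pushing the expansions to second order and imposing solvability against the weight $e^{c_0z}U_0'$ (which spans the kernel of the formal adjoint of $\partial_{zz}+c_0\partial_z+f_0'(U_0)$) yields, with $V_1=\dot\Psi$ and $V_2=\dot W$,
$$\ddot c(e_i):=\frac{d^2}{d\eps^2}c^\eps(e_i)\Big|_0=-\frac{1}{\int_\R e^{c_0z}(U_0')^2\,dz}\int_\R\!\!\int_0^1 e^{c_0z}\Big(f_0''(U_0)\,V_i^2+2\,\partial_u g(y,U_0)\,V_i\Big)U_0'\,dy\,dz.$$
Hence $c^\eps(e_1)-c^\eps(e_2)=\tfrac{\eps^2}{2}\big(\ddot c(e_1)-\ddot c(e_2)\big)+o(\eps^2)$ while $c^\eps(e_i)=c_0+o(1)>0$, so it suffices to exhibit one admissible $g$ with $\ddot c(e_1)>\ddot c(e_2)$; for such a $g$ and small $\eps\neq0$ the speeds $c(e):=c^\eps(e)$ then satisfy $c(e_1)>c(e_2)>0$, and uniqueness of the front in every direction has been established above.

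The main obstacle is exactly this last strict inequality $\ddot c(e_1)>\ddot c(e_2)$. The expected mechanism is the homogenisation principle that a front crossing the layers is hindered by the oscillation more than one running along them — for which the transverse diffusion $\partial_{yy}$ regularises the profile — so that $\ddot c(e_2)<\ddot c(e_1)$. To make this rigorous I would take $g=h(y)\rho(u)$ with $h$ a single Fourier mode, solve for $\dot\Psi$ and $\dot W$ by separation of variables (each transverse Fourier coefficient of $\dot W$ solving an ODE in $z$ driven by the homogeneous linearised front operator), and evaluate the two integrals above for a suitable $\rho$ supported away from the degeneracy points of $f_0$; this is the only genuinely computational point, everything else being a routine perturbation of the homogeneous bistable picture.
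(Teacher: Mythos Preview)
Your route is genuinely different from the paper's. The paper does not perturb around a homogeneous problem; it takes $f_1(y,u)=f_0(u)+M\chi_1(y/L)\chi_2(u)$ with $\chi_2$ supported in $(S,1)$ (where $\int_0^S f_0=0$) and chooses $L,M$ \emph{large}. The support of $\chi_2$ forces any $x$-independent periodic steady state to satisfy $q\le S$, hence to solve the homogeneous equation, which makes the bistability verification (Lemma~\ref{lem:f1}) direct. The speed ordering is then obtained by explicit barriers: a compactly supported radial subsolution living in the strip where $\chi_1=1$ shows $c(e_1)\to+\infty$ as $M\to+\infty$, while an exponential supersolution $\psi(t,y)=\tfrac14+e^{2t-y-L}$, iterated period by period, yields $c(e_2)\le 2L/\tau$ with $\tau$ depending on $L$ but \emph{not on $M$}. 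One first fixes $L$ large, then sends $M\to+\infty$. No Fredholm or spectral perturbation machinery is needed.

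Your perturbative framework is coherent in outline, but it has a genuine gap precisely where you locate it: you never establish $\ddot c(e_1)>\ddot c(e_2)$, and this inequality is the entire content of the proposition once existence and uniqueness are in hand. The heuristic you invoke (layers hinder a transverse front more than a parallel one) does not by itself determine the sign of the \emph{difference}: your formulas are quadratic in $V_i$ with indefinite weight $e^{c_0 z}f_0''(U_0)U_0'$, so neither the sign of each $\ddot c(e_i)$ nor their ordering is evident without computation. Closing the argument would require solving the two linear ODEs for the Fourier coefficients of $V_1$ and $V_2$ (they differ only through the first-order coefficient, $c_0$ versus $c_0\pm 4\pi i$) and then exhibiting a concrete $\rho$ for which the resulting integrals differ with the right sign --- or else showing abstractly that $\rho\mapsto\ddot c(e_1)-\ddot c(e_2)$ is a non-trivial quadratic form on the admissible $\rho$'s. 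Until this is carried out, what you have is a programme rather than a proof; the paper's barrier construction sidesteps the difficulty entirely at the price of a more hands-on argument.
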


	The function $f_1 (y,u)$ we construct will be periodic in $y$
	with some positive period, which one can then reduce to $1$ (to be coherent with
	the rest of the paper) by simply rescaling the spatial variables.

We first introduce a smooth function $f_0:[0,1]\to\R$ with the following properties:
	$$f_0(0)=f_0\Big(\frac12\Big)=f_0(1)=0,\qquad
	f_0<0 \text{ in }\Big(0,\frac12\Big),\qquad
	f_0>0 \text{ in }\Big(\frac12,1\Big),$$
	$$ f_0'(0)=f_0'(1)=-1,\qquad
	f_0'\Big(\frac12\Big)>0,\qquad
	|f_0 '|\leq1,\qquad
	\int_0^1 f_0>0.$$
	We let $\frac12<S<1$ be the quantity identified by the relation
	$$
	\int_0^S f_0=0.
	$$
	Next, we consider two smooth functions $\chi_i:\R\to\R$,
	$i=1,2$, satisfying 
	$\chi_i\geq0,\not\equiv0$ and 
	$$\supp\chi_1\subset(0,1),\quad\chi_1=1\text{ on }\Big[\frac14,\frac34\Big],
	\qquad\supp\chi_2\subset(S,1),$$
where $\supp$ denotes the closed support.
	We then set
	\Fi{def1}
	\forall u \in\R,\ y\in[0,2L],\quad
	f_1 (y,u):=f_0(u) + M\chi_1\Big(\frac y L\Big)\,\chi_2(u),
	\Ff 
	  $L,M$ being positive constants that will be chosen later. 
	We finally extend $f_1 (y,u)$ to~$\R^2$ by periodicity
	in the $y$-variable, with period $2L$. 
	Observe that $f_1 (y,u)\geq f_0(u)$, and that equality holds for $y\in[(2j-1)L,2jL]$, $j\in\Z$.
	Until the end of the proof of Proposition~\ref{pro:speeds}, 
	when we say that a function is periodic we mean that its period is $2L$.
	
	Let us show that the equation~\eqref{eq:f1} is bistable in the sense of
	Assumption~\ref{ass:bi}. We shall also check that it fulfils 
	Assumption~\ref{ass:speeds}, for which, owing to Proposition~\ref{prop:counter}
	in the Appendix, it is sufficient to show that any intermediate state
	is linearly unstable.  
	We shall need the following observations about the periodic steady states 
	of the homogeneous equation.
	
	\begin{lem}\label{lem:f0}
		For the equation
		\Fi{eq:f0}
		\partial_t u = \Delta u + f_0(u), \quad t \in \R, \ (x,y) \in \R^2 ,
		\Ff
		the following properties hold:
		\begin{enumerate}[$(i)$]
		\item the constant steady states $0$, $1$ are linearly stable, whereas 
		$\frac12$ is linearly unstable;
		\item any periodic steady state which is not identically constant is linearly unstable;
		\item there does not exist any pair $0<q<\tilde q<1$ of periodic steady states.
		\end{enumerate}
	\end{lem}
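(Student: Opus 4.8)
The plan is to dispatch the three assertions separately: (i) is a one-line eigenvalue computation, (ii) is the classical ``translate the steady state and look at the kernel of the linearization'' argument, and (iii) I would obtain by minimizing the energy over the order interval $[q,\tilde q]$ and invoking (i)--(ii). For (i), the linearized operator around a constant state $\kappa\in\{0,\tfrac12,1\}$ is $\mathcal{L}_\kappa w=\Delta w+f_0'(\kappa)w$, whose principal periodic eigenvalue is $f_0'(\kappa)$ (attained by the constant eigenfunction). Since $f_0'(0)=f_0'(1)=-1<0<f_0'(\tfrac12)$, the states $0,1$ are linearly stable and $\tfrac12$ is linearly unstable.

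For (ii), let $p$ be a non-constant periodic steady state of \eqref{eq:f0} and differentiate $\Delta p+f_0(p)=0$ with respect to $x_i$, $i=1,2$, to get $\mathcal{L}_p(\partial_{x_i}p)=0$. As $p$ is not constant, $\partial_{x_i}p\not\equiv0$ for some $i$; being the partial derivative of a periodic function it has zero average over the period cell, hence it changes sign. Thus $0$ is an eigenvalue of $\mathcal{L}_p$ with a sign-changing eigenfunction, so it cannot be the periodic principal eigenvalue (which is simple and has a one-signed eigenfunction); being at most the largest eigenvalue, we get $\lambda_1(\mathcal{L}_p)>0$, i.e.\ $p$ is linearly unstable.

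For (iii), I would argue by contradiction: assume there are periodic steady states $0<q<\tilde q<1$. Consider the energy $E(w)=\int_{\mathbb{T}}\big(\tfrac12|\nabla w|^2-F_0(w)\big)$ on $\mathbb{T}=\R^2/(2L\Z)^2$, with $F_0(s)=\int_0^s f_0$, restricted to the order interval $K=\{w\in H^1(\mathbb{T}):q\le w\le\tilde q\}$. Since $F_0$ is bounded on $[0,1]$, $E$ is coercive and weakly lower semicontinuous on the convex, weakly closed set $K$, hence attains its minimum at some $u_0\in K$. Because $q$ and $\tilde q$ are steady states (so sub- and supersolutions), the parabolic flow of \eqref{eq:f0} starting in $K$ stays in $K$ by comparison and strictly decreases $E$ away from equilibria; therefore a minimizer of $E$ over $K$ must be an equilibrium, and by elliptic regularity $u_0$ is a classical periodic steady state with $0<q\le u_0\le\tilde q<1$.

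It remains to reach a contradiction from optimality. If $u_0\equiv q$ (resp.\ $u_0\equiv\tilde q$), perturb $u_0$ by $t\,\varphi$ with $\varphi$ the positive principal eigenfunction of $\mathcal{L}_q$ (resp.\ $-\varphi$ that of $\mathcal{L}_{\tilde q}$) and $t>0$ small: such a perturbation remains in $K$ because $\min_{\mathbb{T}}(\tilde q-q)>0$, the first variation of $E$ at $u_0$ in this direction vanishes since $u_0$ is a steady state, hence the second variation is nonnegative, which reads $-\lambda_1(\mathcal{L}_{u_0})\|\varphi\|_{L^2}^2\ge0$, i.e.\ $\lambda_1(\mathcal{L}_{u_0})\le0$. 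Otherwise $u_0\not\equiv q,\tilde q$, so by the strong maximum principle applied to $u_0-q$ and $\tilde q-u_0$ (each solving a linear elliptic equation) one has $q<u_0<\tilde q$ with positive gaps, $u_0$ is then an unconstrained local minimizer of $E$, and the same second-variation computation gives $\lambda_1(\mathcal{L}_{u_0})\le0$. But $u_0$ is a periodic steady state with values in $(0,1)$: if $u_0$ is non-constant this contradicts (ii), and if $u_0$ is constant then $u_0\equiv\tfrac12$ and it contradicts (i). Hence no such pair exists. The main obstacle in this plan is exactly the middle claim, that the constrained minimizer $u_0$ is a genuine solution of the elliptic equation lying strictly inside $(0,1)$ — i.e.\ the ``minimizer between an ordered pair of sub- and supersolutions is a solution'' fact — which must be invoked with some care (via the truncated functional, or via invariance of $K$ under the gradient flow); once this and the observation $u_0\in(0,1)$ are in place, the rest is routine.
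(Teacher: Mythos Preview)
Your treatment of $(i)$ and $(ii)$ matches the paper's almost verbatim: the principal periodic eigenvalue at a constant state is $f_0'(\kappa)$, and for a non-constant periodic state the derivative $\partial_{x_i}p$ is a sign-changing periodic function annihilated by $\mathcal{L}_p$, forcing the principal eigenvalue to be strictly positive.

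For $(iii)$ you take a genuinely different route. The paper argues dynamically: since $q$ is linearly unstable by $(i)$--$(ii)$, the perturbation $q+\eps\varphi_q$ is a strict stationary subsolution lying below $\tilde q$; the parabolic flow from it increases monotonically to some periodic steady state $\hat q$ with $q<\hat q\leq\tilde q$. But $\hat q$ is again linearly unstable by $(i)$--$(ii)$, so $\hat q-\eps'\varphi_{\hat q}$ is a strict supersolution from which the flow strictly decreases, and this is incompatible with $\hat q$ attracting everything between $q+\eps\varphi_q$ and $\hat q$ from below. You instead minimize the energy over the order interval $[q,\tilde q]$, produce a periodic equilibrium $u_0$ with $\lambda_1(\mathcal{L}_{u_0})\leq 0$ via the second variation, and contradict $(i)$--$(ii)$ directly. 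Both arguments hinge on the same dichotomy (non-constant $\Rightarrow$ unstable; constant in $(0,1)$ $\Rightarrow$ equals $\tfrac12$, unstable). The paper's path is a couple of lines shorter and entirely self-contained once $(i)$--$(ii)$ are in hand; your path trades that for the standard but slightly heavier ``minimizer between ordered sub/supersolutions is a solution'' step, which you correctly flag as the only point needing care. Your handling of the boundary cases $u_0\equiv q$ or $u_0\equiv\tilde q$ via one-sided perturbation and vanishing first variation is clean and correct.
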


	\begin{proof}
		Statement $(i)$ is trivial, because the principal eigenvalue of the 
		linearized operator around the constant states $q_1\equiv0$, $q_2\equiv0$,
		$q_3\equiv\frac12$ is equal to $f_0'(q_i)$.
		
		Statement $(ii)$ is a consequence of the invariance of the equation by 
		spatial translation.
		Indeed, if $q$ is a steady state which is not identically 
		constant then it admits a partial derivative $\partial_i q$ which is not identically
		equal to $0$; if in addition $q$ is periodic then
		$\partial_i q$ must change sign.
		Then, differentiating the equation $\Delta q + f_0(q)=0$ 
		with respect to~$x_i$ we find that
		$\partial_i q$ is a sign-changing eigenfunction of the linearized operator 
		around~$q$, with eigenvalue 0. 
		It follows that the principal eigenvalue $\lambda_q$
		of such operator in the space of periodic functions 
		(which is maximal, simple and associated with a positive eigenfunction)  is positive,
		that is, $q$ is linearly unstable.
		
		We prove statement $(iii)$ by contradiction. Assume that \eqref{eq:f0} admits
		a pair of periodic steady states  $0<q<\tilde q<1$. 
		We know from $(i)$-$(ii)$ that such solutions are linearly unstable.
		Then, calling $\varphi_q$  the principal eigenfunction associated with
		$\lambda_q$, one readily checks that for $\eps>0$ sufficiently small,
		$q+\eps\varphi_q$ is a stationary strict subsolution of~\eqref{eq:f0}.
		Take $\eps>0$ such that the above holds and in addition
		$q+\eps\varphi_q<\tilde q$. 
		It follows from the parabolic comparison principle
		that the solution with initial datum $q+\eps\varphi_q$ is strictly increasing in 
		time and then it converges as 
		$t\to+\infty$ to a steady state $\hat q$ satisfying $q<\hat q\leq\tilde q$.
		This is impossible, because $\hat q$ is linearly unstable by $(i)$-$(ii)$
		and then its basin of attraction
		cannot contain the function $q+\eps\varphi_q$.
	\end{proof}

	\begin{rmk}
		Consider the homogeneous equation~\eqref{eq:homo}
		with a general reaction term $f=f
		(u)$. 
		Statement $(ii)$ of Lemma~\ref{lem:f0} holds true in such case,
		because its proof only relies on the spatial-invariance of the equation. 
		Thus, if Assumption~\ref{ass:bi} holds, the uppermost steady state $\bar p$
		must be constant. One then finds that 
		Assumption~\ref{ass:bi} necessarily implies that
		$$
		\exists \theta\in(0,\bar p),\quad
		f(0)=f(\theta)=f(\bar p)=0,\quad f<0\text{ in }(0,\theta),\quad 
		f>0\text{ in }(\theta,\bar p).
		$$
		As a matter of fact, these conditions are equivalent to Assumption~\ref{ass:bi}.
		Indeed, even though the constant state $\theta$ may not
		be linearly unstable (if $f'(\theta)=0$), one sees that~$\theta$ 
		is unstable in a strong sense:
		$\theta+\eps$ belongs to the basin of attraction of $0$ if $\eps<0$ and
		of $\bar p$ if $\eps>0$.
		This is enough for the proof of Lemma~\ref{lem:f0} $(iii)$ to work. 
	\end{rmk}	

We can now derive the bistability character of \eqref{eq:f1}.

	\begin{lem}\label{lem:f1}
		Consider the equation~\eqref{eq:f1} with $f_1$ defined by \eqref{def1}.
		The following properties hold:
		\begin{enumerate}[$(i)$]
		\item	any periodic steady state $0<q<1$ is linearly unstable;
		\item there does not exist any pair $0<q<\tilde q<1$ of periodic steady states.
		\end{enumerate}
	\end{lem}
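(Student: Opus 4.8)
The plan is to reduce both statements to one–dimensional problems. First I would dispose of $x$–dependent steady states: if a periodic steady state $q$ of \eqref{eq:f1} depends on $x$, then $\partial_x q\not\equiv0$ is a periodic solution of $\mathcal{L}_q w:=\Delta w+\partial_u f_1(y,q)w=0$ (the coefficient is $x$–independent) which, having zero mean in $x$, changes sign; hence $0$ is a non-principal eigenvalue of $\mathcal{L}_q$ and the principal one $\lambda_q$ is positive. This proves $(i)$ for such $q$ and reduces $(i)$ to steady states $q=q(y)$, for which, by separation of variables, $\lambda_q$ coincides with the periodic principal eigenvalue of the one–dimensional Schrödinger-type operator $w\mapsto w''+\partial_u f_1(y,q(y))w$. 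Granting $(i)$, statement $(ii)$ follows exactly as in the proof of Lemma~\ref{lem:f0}$(iii)$: if $0<q<\tilde q<1$ were periodic steady states, then $q+\eps\varphi_q$ ($\varphi_q>0$ the principal eigenfunction of $\mathcal{L}_q$, $\eps>0$ small) is a strict stationary subsolution lying below $\tilde q$, the solution issued from it increases to a periodic steady state $\hat q\in(q,\tilde q]$, which by $(i)$ is linearly unstable; comparing that increasing orbit with the strictly decreasing orbit of the strict supersolution $\hat q-\delta\varphi_{\hat q}$ (with $\delta$ small enough that $\hat q-\delta\varphi_{\hat q}>q+\eps\varphi_q$) gives $\hat q\le\hat q-\delta\varphi_{\hat q}<\hat q$, a contradiction.

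It remains to prove $(i)$ for periodic $q=q(y)$, $0<q<1$. I would first clear the ``trivial'' profiles. If $\max q<\tfrac12$ then $\chi_2(q)\equiv0$ and $q''=-f_0(q)>0$, impossible for a periodic function. If $\max q=\tfrac12$, the quantity $\tfrac12 q'^2+F_0(q)$, with $F_0(s):=\int_0^s f_0$, is conserved, equals $F_0(\tfrac12)$ at a maximum point, and the strict monotonicity of $F_0$ on $[0,\tfrac12]$ forces $q\equiv\tfrac12$. If $\min q\ge\tfrac12$, then $q''=-f_0(q)-M\chi_1(\cdot/L)\chi_2(q)\le0$, so $q$ is concave and periodic, hence constant and then $\equiv\tfrac12$. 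If $\max q\le S$, then $\chi_2(q)\equiv\chi_2'(q)\equiv0$, $q$ is a periodic steady state of \eqref{eq:f0} with the same linearised operator, and Lemma~\ref{lem:f0}$(i)$--$(ii)$ apply. In all these cases $q$ is either linearly unstable or equal to the constant $\tfrac12$, whose linearisation $\Delta+f_0'(\tfrac12)$ has principal eigenvalue $f_0'(\tfrac12)>0$.

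The genuinely delicate case is $\min q<\tfrac12<S<\max q$. There $q$ is non-constant, so $q'$ changes sign; by the convexity/concavity observations above $q$ alternates strictly convex ``valleys'' in $\{q<\tfrac12\}$ with strictly concave ``hills'' in $\{q>\tfrac12\}$. The instability mechanism I would use is Sturmian: suppose one can exhibit two consecutive critical points $y_1<y_2$ of $q$ such that $q'(y)\,\partial_y f_1(y,q(y))\le0$ on $[y_1,y_2]$. Since $\partial_y f_1(y,q)=\tfrac ML\chi_1'(y/L)\chi_2(q(y))$ and $q'$ has constant sign on $(y_1,y_2)$, this only asks that, wherever $q>S$ on $[y_1,y_2]$, the slope $\chi_1'(y/L)$ have the sign opposite to $q'$; it holds automatically when $q\le S$ on $[y_1,y_2]$, or when $[y_1,y_2]$ meets only the plateau or the zero set of $\chi_1(\cdot/L)$. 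Differentiating the equation, $q'$ solves $\mathcal{L}_q(q')=-\partial_y f_1(\cdot,q)$ on $[y_1,y_2]$, vanishes at the endpoints and has constant sign in between, so $\langle q',\mathcal{L}_q q'\rangle_{L^2(y_1,y_2)}=-\int_{y_1}^{y_2}q'\,\partial_y f_1(\cdot,q)\ge0$; hence the Dirichlet principal eigenvalue of $\mathcal{L}_q$ on $[y_1,y_2]$ is nonnegative. Since $[y_1,y_2]$ is a proper subinterval of the period — the zero-extension of $q'$ is an admissible but non-optimal test function for the everywhere-positive periodic principal eigenfunction — this gives $\lambda_q>0$, i.e.\ $q$ is linearly unstable.

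The main obstacle is to produce such a pair of consecutive critical points in this last case. I would argue from the conserved energy $\tfrac12 q'^2+F_0(q)$, which is constant, equal to some $E_*$, on the connected region where $\chi_1(\cdot/L)\equiv0$, of length strictly larger than $L$ (as $\supp\chi_1\subset(0,1)$). When $E_*<0$ and $L$ is chosen large relative to the period of the homogeneous orbit of energy $E_*$, the function $q$ completes a full oscillation inside that region, where $\chi_1'(\cdot/L)\equiv0$, and $y_1,y_2$ can be taken there. When $q$ instead exceeds $S$ inside that region — which forces $E_*>0$ and a single unimodal excursion up to a turning point $q^*>S$ — one must check, using that $M$ is large and that $\chi_2$ vanishes at the edges of its support, that $q$ cannot remain above $S$ while $\chi_1(\cdot/L)$ varies, so that the sign condition holds on the ascending (or descending) arc through $q^*$. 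The near-homoclinic case $E_*\simeq0$, in which $q$ may complete less than one oscillation in the homogeneous region, is the most delicate point and has to be treated separately, exploiting that there $q$ comes arbitrarily close to the stable state $0$, which pins down the unique hill and lets one place $y_1,y_2$ so that the sign condition holds.
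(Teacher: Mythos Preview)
Your reduction to $x$--independent states and your derivation of $(ii)$ from $(i)$ are fine, and match the paper. The gap is in your treatment of the case $\max q>S$: you correctly identify it as the crux, and then propose a Sturmian program whose key step --- producing consecutive critical points $y_1<y_2$ on which $q'\,\partial_y f_1(\cdot,q)\le 0$ --- you do not actually carry out. Your sketch invokes ``$L$ large relative to the period of the homogeneous orbit'', ``$M$ large'', and a separate ``near-homoclinic'' analysis; none of these constraints are in the lemma (which holds for \emph{all} $L,M>0$), and the arguments are not complete as stated.

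The missed idea is that this ``delicate case'' is in fact \emph{empty}. Since $f_1\ge f_0$, any $x$--independent periodic steady state $0\le q(y)<1$ satisfies $q\le S$ automatically. Indeed, if $q$ is non-constant, pick any $\eta$ with $q'(\eta)>0$ and let $a<\eta<b$ be the adjacent critical points with $q'>0$ on $(a,b)$. Multiplying $-q''=f_1(y,q)\ge f_0(q)$ by $q'>0$ and integrating over $(a,b)$ gives
\[
0=-\tfrac12\big[(q')^2\big]_a^b\ \ge\ \int_a^b f_0(q)\,q'\,dy=\int_{q(a)}^{q(b)} f_0(u)\,du.
\]
This forces $q(a)\le\tfrac12$ (otherwise the integral is positive), hence $\int_0^{q(b)}f_0\le\int_0^{q(a)}f_0+\int_{q(a)}^{q(b)}f_0\le 0$, i.e.\ $q(b)\le S$ by the definition of $S$. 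Thus every local maximum of $q$ is $\le S$, so $q\le S$, $\chi_2(q)\equiv0$, and $q$ is a periodic steady state of the homogeneous equation~\eqref{eq:f0} with the same linearised operator; Lemma~\ref{lem:f0} then gives the linear instability. This two-line energy estimate replaces your entire Sturmian program.
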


	\begin{proof}
		The proof is achieved in several steps.
		
		 \smallskip
		 \setlength{\leftskip}{\parindent}
		 \noindent 
		 {\em Step 1: any periodic steady state which is not $x$-independent is 
		 linearly unstable.}

		 \setlength{\leftskip}{0cm}
		 \noindent
			Because the equation \eqref{eq:f1} is invariant
			by translation in the $x$-variable, we can proceed exactly as in the proof of 
			Lemma~\ref{lem:f0} $(i)$.
		
		 \smallskip
		 \setlength{\leftskip}{\parindent}
		 \noindent 
		{\em Step 2: if $0\leq q<1$ is a periodic steady state which is 
		$x$-independent then $q\leq S$.}
		
		\setlength{\leftskip}{0cm}
		 \noindent	
			We recall that $S$ is defined by $\int_0^S f_0=0$. 
			Suppose that $q=q(y)$ is not constant, 
			otherwise it is identically equal to $0$ or $\frac12<S$.
			Consider an arbitrary $\eta\in\R$ with $q'(\eta)>0$.
			Let $a<\eta<b$ be such that $q'(a)=q'(b)=0$ and $q'>0$ in $(a,b)$.
			Multiplying the inequality $-q''=f_1 (y,q)\geq f_0(q)$ by $q'$ and integrating on $(a,b)$ we~get
			$$0\geq\int_a^b f_0(q(y))q'(y) dy =\int_{q(a)}^{q(b)} f_0(u)du.
			$$ 
			This implies first that $q(a)\leq\frac12$, 
			and then that $\int_{0}^{q(b)} f_0(u)du\leq0$.
			Recalling the definition of $S$, we find that $q(b)\leq S$, whence 
			$q(\eta)<S$. We have thereby shown that $q(\eta)<S$ whenever $q'(\eta)>0$,
			and therefore that $q\leq S$.
					
		\smallskip
		 \setlength{\leftskip}{\parindent}
		 \noindent 
		{\em Step 3: if \eqref{eq:f1} admits a pair of periodic steady states 
		$0<q<\tilde q<1$, then 
			there exists a periodic steady state 
			$q\leq\hat q\leq S$ which is not linearly unstable.}
		
			\setlength{\leftskip}{0cm}
		 	\noindent
		 	If $\tilde q$ is not linearly unstable then the Steps 1-2 imply that 
			$\tilde q\leq S$,
			which means that the conclusion holds
			with $\hat q=\tilde q$ in such case.
			Suppose now that $\tilde q$ is linearly unstable. It follows from 
			the same argument as in the proof of Lemma~\ref{lem:f0} $(iii)$
			that for $\eps>0$ sufficiently small, the 
			function $\tilde q-\eps\varphi_{\tilde q}$
			is a supersolution of~\eqref{eq:f1}, 
			which is larger than~$q$,
			where $\varphi_{\tilde q}$
			is the principal eigenfunction of the linearized operator around $\tilde q$. 
			The comparison principle then implies
			that the solution of \eqref{eq:f1}
			with initial datum $\tilde q-\eps\varphi_{\tilde q}$ is strictly decreasing in time and then it converges as 
			$t\to+\infty$ to a steady state $\hat q$ satisfying $q\leq\hat q<\tilde q-\eps\varphi_{\tilde q}$.
			Such state cannot be linearly unstable, because its basin of attraction contains the function
			$\tilde q-\eps\varphi_{\tilde q}$. Then, as before, $\hat q\leq S$ by the Steps 1-2.
			
		\smallskip	
		{\em Step 4: conclusion.}
			
			\noindent
			Assume by contradiction that there is a periodic steady state $0<q<1$
			which is not linearly unstable.
			From the Steps~1-2 we deduce that $q\leq S$. 
			This means that $q$ is a stationary solution of~\eqref{eq:f0}. 
			Lemma~\ref{lem:f0} then implies that $q$ is linearly unstable
			for~\eqref{eq:f0}, and thus for \eqref{eq:f1} too.
			This is a contradiction.  We have thereby proved $(i)$.
			Suppose now \eqref{eq:f1} admits a pair of periodic steady states 
			$0<q<\tilde q<1$.
			Then Step~3 provides us with a periodic steady state $0<\hat q\leq S$ 
			which is not linearly unstable, contradicting~$(i)$.		
	\end{proof}
	
Let $f_1$ be defined by \eqref{def1}, with $L,M>0$ still to be chosen.
Lemma~\ref{lem:f1} implies that the equation \eqref{eq:f1} 
is bistable in the sense of Assumption~\ref{ass:bi} with $\bar p\equiv1$.
Moreover, thanks to Proposition~\ref{prop:counter} in the Appendix, it also entails  
Assumption~\ref{ass:speeds}.
We can thus apply Theorem \ref{th:bi}, which provides us with a 
monotonic in time pulsating travelling front connecting $1$ 
to $0$, for any given direction $e\in \mathbb{S}^{1}$.	
Let~$c(e)$ be the associated speed.
Before showing that $c(e_1)>c(e_2)$, let us derive the uniqueness of the pulsating 
travelling front and the positivity of its speed.

	\begin{lem}\label{lem:uniqueness}
		The equation~\eqref{eq:f1} with $f_1$ defined by \eqref{def1}
		admits a unique (up to shifts in time) pulsating travelling front 
		connecting $1$ to $0$ for any given direction $e\in\mathbb{S}^{1}$.	
	
		Furthermore, the front is strictly increasing in time 
		and its speed $c(e)$ is positive.	
	\end{lem}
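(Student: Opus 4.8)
The plan is to establish, in turn, positivity of the speed, uniqueness up to a shift in time, and strict monotonicity in time; the first two rest on comparison arguments that are classical for \emph{nondegenerate} bistable problems, which is exactly the present situation. Indeed, since $\supp\chi_2\subset(S,1)$, the function $f_1(y,\cdot)$ agrees with $f_0$ on a neighbourhood of $0$ and of $1$, and $f_0'(0)=f_0'(1)=-1<0$, so $0$ and $1$ are linearly stable for \eqref{eq:f1} uniformly in~$y$. In particular any pulsating front of \eqref{eq:f1} connecting $1$ to $0$ reaches its limits as $z\to\pm\infty$ at a uniform exponential rate, as one sees by building exponential super-solutions from the principal eigenfunctions of the linearizations at $0$ and $1$ (see e.g.~\cite{BH12}); this uniform exponential behaviour is the key structural fact that the two comparison arguments below exploit.

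\emph{Positivity.} Let $U$ be the profile of any pulsating front of \eqref{eq:f1} connecting $1$ to $0$ in a direction $e\in\mathbb{S}^1$, with speed $c$, and let $u(t,X)=U(X,X\cdot e-ct)$, $X\in\R^2$, be the corresponding entire solution. Consider the homogeneous equation $\partial_t u=\Delta u+f_0(u)$. Its planar front has speed $\gamma_0$, with $\gamma_0>0$ because $\int_0^1 f_0>0$ (multiply the profile equation by the profile's derivative and integrate over $\R$). Moreover, by \cite{AW} there are $R_0,\delta_0>0$ such that every solution of this equation lying initially above $(1-\delta_0)\1_{B_{R_0}(X_0)}$ converges locally uniformly to~$1$, the invaded region expanding asymptotically at speed $\gamma_0$. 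Since $f_1\geq f_0$, the same lower estimate holds for solutions of \eqref{eq:f1}. Now, as $U(\cdot,-\infty)\equiv1$ uniformly, $u(0,\cdot)\geq 1-\delta_0$ on a half-space $\{X\cdot e\leq-R\}$; covering it by balls of radius $R_0$ and invoking the homogeneity of $f_0$, we get that for every $\eps>0$ there is $C_\eps$ with $u(t,X)\geq1-\delta_0$ whenever $X\cdot e\leq(\gamma_0-\eps)t-C_\eps$ and $t$ is large. If $c<\gamma_0$, choosing $\eps$ with $\frac{c+\gamma_0}{2}<\gamma_0-\eps$ and evaluating at $X_t:=\frac{c+\gamma_0}{2}t\,e$ gives $U\big(X_t,\frac{\gamma_0-c}{2}t\big)\geq1-\delta_0$ for large $t$, which is impossible since $U(\cdot,z)\to0$ uniformly as $z\to+\infty$. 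Hence $c\geq\gamma_0>0$.

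\emph{Uniqueness and monotonicity.} Let $U_1,U_2$ be two pulsating fronts connecting $1$ to $0$ in direction $e$, with speeds $c_1,c_2>0$, so the $u_i(t,X):=U_i(X,X\cdot e-c_it)$ are genuine entire solutions of \eqref{eq:f1}. Using the uniform exponential behaviour of $U_1,U_2$ at $\pm\infty$ one finds $s_0$ with $U_2(X,X\cdot e+s_0)\geq U_1(X,X\cdot e)$ for all $X$; the parabolic comparison principle then yields $U_2(X,X\cdot e+s_0-c_2t)\geq U_1(X,X\cdot e-c_1t)$ for $t\geq0$, and, with the periodicity of $U_1,U_2$ in the first variable, $U_2(\cdot,\cdot+s_0)\geq U_1$ on $\R^2\times\R$. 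Evaluating along $X_t\cdot e=\frac{c_1+c_2}{2}t$ and letting $t\to+\infty$ rules out $c_1>c_2$; by symmetry $c_1=c_2=:c$. We then slide: set $s^*:=\inf\{s:U_2(\cdot,\cdot+s)\geq U_1\}$, so $U_2(\cdot,\cdot+s^*)\geq U_1$ by continuity. If equality fails, the strong maximum principle applied to the two entire solutions forces $U_2(\cdot,\cdot+s^*)>U_1$ everywhere; combined with spectral-gap sub/supersolutions near the stable states $0$ and $1$ — the classical Fife–McLeod device, used in the present paper in the proof (Section~\ref{sec:terrace}) that the speeds of a terrace are ordered — this permits a further decrease of $s^*$, a contradiction. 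Hence $U_1\equiv U_2(\cdot,\cdot+s^*)$, i.e.\ the two fronts differ by a shift of the moving variable, equivalently (as $c\neq0$) by a shift in time; this is the asserted uniqueness (alternatively, once the speed is known to be nonzero, one may appeal to \cite[Theorem~1.14]{BH12} and the uniqueness statements therein). Finally, Theorem~\ref{th:bi} provides a front that is nondecreasing in time, so by uniqueness \emph{every} pulsating front connecting $1$ to $0$ is nondecreasing in time; writing $w:=\partial_t u\geq0$ for such a front, $w$ solves the equation obtained by linearizing \eqref{eq:f1} along $u$ and is not identically~$0$ (otherwise $u$ would be time-independent, forcing $U(X,\cdot)$ to be constant and contradicting $U(\cdot,-\infty)\equiv1\not\equiv0\equiv U(\cdot,+\infty)$), whence $w>0$ everywhere and the front is strictly increasing in time.

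\emph{Main obstacle.} The delicate ingredient is the comparison machinery underlying the first two steps — the uniform exponential decay of the front profiles at $\pm\infty$ and the shift-reduction step of the sliding method. These are classical for nondegenerate bistable nonlinearities, which is our case thanks to the choice of $f_1$, and are moreover structurally identical to arguments already implemented in the paper, so no essentially new difficulty arises beyond transcribing them to the pulsating, two-dimensional setting.
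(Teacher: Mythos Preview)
Your proposal follows the same three-part architecture as the paper's proof (positivity via comparison with the homogeneous problem, uniqueness via sliding, strict monotonicity via the strong maximum principle applied to $\partial_t u$), and is correct in outline. The positivity and monotonicity steps are essentially identical to the paper's.

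The sliding argument, however, is organised differently. The paper does \emph{not} compare two arbitrary fronts: it takes one of them to be the front produced by Theorem~\ref{th:bi}, which is known a priori to be decreasing in~$z$, and uses this monotonicity in an essential way to propagate the comparison with the $\eps$-shifted front $U_2(\cdot,\cdot+k^\eps)+\eps$ backward in time when the two speeds differ. Your route instead first pins down $c_1=c_2$ from a single initial comparison and then slides at a common speed. This is perfectly legitimate, but the initial step ``find $s_0$ with $U_2(\cdot,\cdot+s_0)\geq U_1$'' requires more than upper exponential bounds from supersolutions: to dominate $U_1$ in the tail $z\to+\infty$ you need a \emph{lower} exponential bound on $U_2$ (and symmetrically near~$1$), i.e.\ matching exponential rates from below and above. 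This is indeed classical for nondegenerate bistable fronts, but it does not follow from the supersolution construction you cite; you should either justify the lower bound (e.g.\ via Harnack or a subsolution near the stable state) or, more simply, adopt the paper's shortcut of taking one front to be the monotone one from Theorem~\ref{th:bi}, which makes the initial comparison immediate. Also, your sentence ``$U_2(\cdot,\cdot+s_0)\geq U_1$ on $\R^2\times\R$'' does not follow from comparison before you know $c_1=c_2$; it should be moved after the speed equality is established.
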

	
	\begin{proof}
		Firstly, the positivity of the speed of any front
		connecting $1$ to $0$ 
		is an immediate consequence of the facts that $f_1 \geq f_0$
		and that equation~\eqref{eq:f0} admits solutions with compactly supported initial data
		which spread with a positive speed~\cite{AW}.
		
		Next, the fronts provided by Theorem~\ref{th:bi} are monotonic in time.
		Applying the strong maximum principle to their temporal derivative (which 
		satisfies a linear parabolic equation) we infer that the
		monotonicity is strict, unless they are constant in time. The 
		positivity of their speed then implies that they are necessarily strictly increasing in time.
		Hence, the second part of the lemma holds for the fronts given by Theorem~\ref{th:bi}.
		If we show that such fronts are the only ones existing we are done.
		
		Throughout this proof, we use the notation $x$ to indicate a point in $\R^2$.
		Let $u_i(t,x)=U_i(x,x\cdot e-c_i t)$, $i=1,2$, be two
		pulsating travelling fronts for~\eqref{eq:f1} connecting $1$ to $0$
		in a given direction $e\in\mathbb{S}^{1}$. 
		We have seen before that necessarily $c_i>0$.
		This means that the transformation $(x,t)\mapsto (x,x\cdot e-c_i t)$
		is invertible and thus $U_i(x,z)$ enjoys the regularity in $(x,z)$
		coming from the parabolic regularity for $u_i$ 
		(at least $C^1$, with bounded derivatives).
		Let us suppose to fix the ideas that $c_1\geq c_2$.
		We shall also assume that 
		either $U_1$ or $U_2$ is the front provided by~Theorem \ref{th:bi},
		so that we further know that it is decreasing in $z$.	
		
		We use a sliding method.
		The conditions $U_i(\cdot,-\infty)=1$ and $U_i(\cdot,+\infty)=0$
		imply that for any $\eps\in(0,1)$, the following property holds for 
		$-k >0$ sufficiently large (depending on $\eps$):
		$$\forall x\in\R^2,\ z\in\R,\quad
		U_1(x,z)<U_2(x,z+k)+\eps.$$
		The above property clearly fails for $k>0$ large, thus we can define $k^\eps\in\R$ 
		as the supremum for which it is fulfilled. Call $U_2^\eps(x,z):=
		U_2(x,z+k^\eps)$ and $u_2^\eps(t,x):=U_2^\eps(x,x\cdot e-c_2 t)$. 
		Observe that $u_2^\eps$ is just a temporal translation of $u_2$, because $c_2\neq0$,
		whence it is still a solution of~\eqref{eq:f1}. 
		We see that  
		$$\sup(U_1-U_2^\eps)=\eps.$$
		Using again $U_i(\cdot,-\infty)=1$ and $U_i(\cdot,+\infty)=0$
		one infers that a maximizing sequence $(x_n,z_n)_{n\in\N}$
		for $U_1-U_2^\eps$ has necessarily $(z_n)_{n\in\N}$ bounded. 
		By periodicity, we can assume that the sequence $(x_n)_{n\in\N}$ 
		is contained in $[0,L]^2$. Hence, there exists $(x^\eps,z^\eps)$ such that
		$$(U_1-U_2^\eps)(x^\eps,z^\eps)=
		\max(U_1-U_2^\eps)=\eps.$$
		It follows that
		$$u_1(t^\eps_1,x^\eps)=u_2^\eps(t^\eps_2,x^\eps)+\eps,\quad\text{with }\
		t^\eps_i:=\frac{x^\eps\cdot e-z^\eps}{c_i}.$$
		Next, if $U_1(x,z)$ is the front decreasing in $z$ provided by~Theorem \ref{th:bi} then
		for $t\leq0$ we find that
		\[\begin{split}
		u_1(t^\eps_1+t,x)&=U_1(x,x\cdot e-c_1(t^\eps_1+ t))\\
		&\leq
		U_1(x,x\cdot e-c_1 t^\eps_1-c_2 t)\\
		&\leq 
		U_2^\eps(x,x\cdot e-c_2(t^\eps_2+t))+\eps=u_2^\eps(t^\eps_2+t,x)+\eps,
		\end{split}\]
		where we have used the equality $c_1 t^\eps_1=c_2 t^\eps_2$.
		Similarly, if $U_2(x,z)$ is decreasing in $z$ then, for $t\leq0$, we get
		$$u_1(t^\eps_1+t,x)\leq
		U_2(x,x\cdot e-c_2 t^\eps_2-c_1 t)+\eps\leq
		u_2^\eps(t^\eps_2+t,x)+\eps.$$
		Namely, in any case, $u_1(t^\eps_1+t,x)$ lies below 
		$u_2^\eps(t^\eps_2+t,x)+\eps$ until $t=0$, when the two
		functions touch.
		Both $u_1(t^\eps_1+t,x)$ and
		$u_2^\eps(t^\eps_2+t,x)$ are solutions of~\eqref{eq:f1}.
		Moreover, because $f_1 =f_0$ for $u$ close to $0$ and $1$, 
		and $f_0'(0),f_0'(1)<0$, one readily checks that the function 
		$u_2^\eps(t^\eps_2+t,x)+\eps$ is a supersolution of~\eqref{eq:f1}
		in the regions where it is smaller than $\delta$ or larger than $1-\delta+\eps$,
		for some small $\delta$ depending on $f_0$ and~$S$.
		If the contact point $x^\eps$ were in one of such regions,
		the parabolic strong maximum principle would imply that 
		$u_1(t^\eps_1+t,x)\equiv u_2^\eps(t^\eps_2+t,x)+\eps$ there, for $t\leq0$,
		which is impossible because $U_i(\cdot,-\infty)=1$ and $U_i(\cdot,+\infty)=0$.
		Therefore, we have that 
		$$\delta\leq u_1(t^\eps_1,x^\eps)=u_2^\eps(t^\eps_2,x^\eps)+\eps
		=u_2\Big(t^\eps_2-\frac{k^\eps}{c_2},x^\eps\Big)+\eps\leq 1-\delta+\eps.$$
		Now, because $x^\eps\in[0,L]^2$, the above bounds imply that both 
		$t^\eps_1$ and $t^\eps_2-\frac{k^\eps}{c_2}$
		stay bounded as $\eps\searrow0$. Calling $\hat x,\hat t_1,\hat t_2$ the limits
		as $\eps\searrow0$ of (some converging subsequences of) $x^\eps$, $t^\eps_1$,
		$t^\eps_2-\frac{k^\eps}{c_2}$ respectively,
		we eventually deduce that 
		$$u_1(\hat t_1,\hat x)=u_2(\hat t_2,\hat x)\qquad\text{and }\
		\forall t\leq0,\ x\in\R^2,\quad
		u_1(\hat t_1+t,x)\leq u_2(\hat t_2+t,x).$$
		The parabolic strong maximum principle finally yields 
		$u_1(\hat t_1+t,x)\equiv u_2(\hat t_2+t,x)$ for $t\in\R$, $x\in\R^2$.	
		This concludes the proof of the lemma.
	\end{proof}
	
\begin{proof}[Proof of Proposition~\ref{pro:speeds}]
		We need to show that $c(e_1)>c(e_2)$ for a suitable choice of~$L,M>0$. 
		The proof is divided into several parts.
		
		\smallskip	
		{\em Step 1: for $L>8$ there holds that $c(e_1)\to+\infty$ as
		$M\to+\infty$.}
			
		\noindent
		Fix an arbitrary $c>0$.
		We want to construct a subsolution $\underline u$ of \eqref{eq:f1} of the form
		$$\underline{u}(t,x,y):=w\circ\gamma(t,x,y),\qquad
		\gamma(t,x,y):=2-\Big|\Big(x-ct,y-\frac L2\Big)\Big|,$$
		for a suitable function $w\in W^{2,\infty}(\R)$.
		Let $S<\sigma_1<\sigma_2<1$ be such that 
		$$m:=\min_{[\sigma_1,\sigma_2]}\chi_2>0.$$
		We then define $w$ as follows:
		$$w(r):=\begin{cases}
		r^{c+3} & \text{for }0\leq r\leq \rho,\\
		r^{c+3} - \frac{Mm}{2}(r-\rho)^2 & \text{for } r>\rho,
		\end{cases}
		$$
		with $\rho:=\sigma_1^{\frac1{c+3}}$ so that $w(\rho)=\sigma_1$.	
		For $r>\rho$ we compute
		$$w'(r)=(c+3)r^{c+2}- Mm(r-\rho),$$
		which is negative for $M$ large.
		This implies that, for $M$ sufficiently large (depending on $c$),
		there exists $R_M>\rho$ such that
		$$w'>0\ \text{ in }(0,R_M),\qquad w'(R_M)=0.$$
		It also yields that $R_M\searrow\rho$ as $M\to+\infty$.
		Thus, for $M$ large enough there holds that
		$\sigma_2>R_M^{c+3}>w(R_M)$. 
		From now on we restrict ourselves to such values of $M$.
				
		Direct computation shows that
		the function $\underline u$ satisfies (in the weak sense)
		$$\partial_t \underline u - \Delta \underline u\leq
		-w''\circ\gamma+\left(c+\frac1{|(x-ct,y-\frac L2)|}\right)|w'\circ\gamma|.
		$$
		Hence, if $0<\gamma(t,x,y)<R_M$, i.e., if $2-R_M<|(x-ct,y-\frac L2)|<2$,
		recalling that $R_M<\sigma_2^{\frac1{c+3}}<1$, we get
		$$\partial_t \underline u - \Delta \underline u \leq
		-w''\circ\gamma+\left(c+1\right)w'\circ\gamma.$$
		
		Consider first the case
		$0<\gamma<\rho\ (<1)$.
		We see that
		$$w''\circ\gamma-\left(c+1\right)w'\circ\gamma=
		(c+3)(c+2)\gamma^{c+1}-(c+1)(c+3)\gamma^{c+2}
		>(c+3)\gamma^{c+2}>w\circ\gamma.$$
		Recalling that $|f_0'|\leq 1$, 
		we get $w\circ\gamma\geq-f_0(w\circ\gamma)\geq-f_1 (y,w\circ\gamma)$. 
		This means that
		$\underline u$ is a subsolution of \eqref{eq:f1} in the region $0<\gamma<\rho$.
		
		Instead, if $\rho<\gamma<R_M$, 
		there holds that $\sigma_1<w\circ\gamma<\sigma_2$ and thus
		$$w''\circ\gamma-\left(c+1\right)w'\circ\gamma>-Mm\geq-M\chi_2(w\circ\gamma).$$
		Observe that $|y-\frac L2|<2$ because $\gamma(t,x,y)>0$, whence 
		$\frac14 L<y<\frac34L$ provided $L>8$. 
		Then, under such condition, it turns out that 
		$\underline u$ is a subsolution of \eqref{eq:f1} in the region 
		$\rho<\gamma<R_M$ too.
				
		We finally extend $w$ to $0$ on $(-\infty,0)$ and we change it into the
		constant $w(R_M)\ (<\sigma_2)$ on $[R_M,+\infty)$. 
		This is still of class $W^{2,\infty}$ and, for $L>8$
		and $M$ large enough, the function 
		$\underline u:=w\circ\gamma$ is a generalized subsolution 
		of~\eqref{eq:f1} in the whole space.	
		
		Notice that $\underline u$ shifts in the direction $e_1$ with speed $c$.
		Moreover, for fixed time, it is compactly supported and bounded from above by 
		$\sigma_2$. It follows that, 
		up to translation in time, it can be placed below the pulsating travelling front
		in the direction $e_1$.
		This readily implies by comparison that the speed 
		of the latter satisfies	$c(e_1)\geq c$. 
		Step 1 is thereby proved due to the arbitrariness of $c$.
		
		\smallskip
		\setlength{\leftskip}{\parindent}
		\noindent 
		{\em Step 2: for $L>\ln4$,
			there exists $\tau>0$, depending on $L$ but not on $M$,
			such that $c(e_2)\leq2L/\tau$.}
		
		\setlength{\leftskip}{0cm}	
		\noindent
		We introduce the following function:
		$$\psi(t,y):=\frac14+e^{2t-y -L}.$$
		This is a strict supersolution of \eqref{eq:f0}. Indeed, we have that
		$$\partial_t \psi-\Delta \psi=\psi-\frac14>f_0(\psi),$$
		where the last inequality holds because $f_0(\psi)<0$ if $\frac14 < \psi<\frac12$
		and $f_0(\psi)\leq\psi-\frac12$ if $\psi > \frac12$.
		We now let $\tau$ be such that $\psi(\tau,0)=\frac12$, that is,
		$$\tau=\frac12\left(L-\ln4\right).$$
		In order to have $\tau>0$ we impose $ L>\ln4$.
		We finally define
		$$\forall j\in\N,\ t\in(0,\tau],\ y\in\R,\quad
		\bar u(j\tau+t,y):=\psi(jL+t,y).$$
		The function $\bar u(t,y)$ is increasing and 
		lower semicontinuous in $t$, because $L>\tau$.
		
		Consider now a pulsating travelling front $u(t,x,y)=U(x,y,y-c(e_2) t)$ 
		for \eqref{eq:f1} in the direction $e_2$ connecting
		$1$ to $0$. The functions $u$ and $U$ are periodic in the $x$ variable. 
		Moreover, there exists $k\in\N$ such that 
		$\bar u(k\tau,y)>U(x,y,y)=u(0,x,y)$ for all $(x,y)\in\R^2$.
		Assume by contradiction that the inequality
		$\bar u(k\tau+t,y)>u(t,x,y)$
		fails for some positive time $t$ and 
		let $T\geq0$ be the infimum of such times. Then, because $\bar u$ is increasing in 
		the first variable and $u$ is continuous,		
		we have that
		$\bar u(k\tau+t,y)\geq u(t,x,y)$ for all $t\in[0,T]$. Moreover, 
		there exist some sequences 
		$t_n\searrow T$ and $((x_n,y_n))_{n\in\N}$ such that $\bar u(k\tau+t_n,y_n)\leq
		u(t_n,x_n,y_n)$ for all $n\in\N$. By the periodicity of $u$ in $x$,
		it is not restrictive to assume that the sequence $(x_n)_{n\in\N}$ is bounded.
		The sequence $(y_n)_{n\in\N}$ is also bounded, because from one hand
		$$\bar  u(k\tau+t_n,y)\geq \bar u(k\tau+T,y)\geq\psi(T,y)>e^{2T-y -L},$$
		which is larger than $1=\sup u$ if $y<2T -L$, while from the other hand 
		$u(t,x,y)=U(x,y,y-c(e_2) t)$
		which converges to $0<\inf\bar u$ as $y\to+\infty$,
		uniformly in $x$ and locally uniformly in $t$.
		Let $(\bar x,\bar y)$ be the limit of (a converging subsequence) 
		of $((x_n,y_n))_{n\in\N}$. The continuity of $u$ and the lower semicontinuity
		of $\bar u$ yield $\bar u(k\tau+T,\bar y)\leq u(T,\bar x,\bar y)$,
		whence in particular $T>0$.
		Summing up, we have that
		\Fi{contact}
		\min_{{0\leq t\leq T}\atop{(x,y)\in\R^2}}
		\big(\bar u(k\tau+t,y)-u(t,x,y)\big)=0=
		\bar u(k\tau+T,\bar y)-u(T,\bar x,\bar y).
		\Ff
		Let $j\in\N$ be such that $k\tau+T\in(j\tau,(j+1)\tau]$.
		Using the inequalities
		\[\begin{split}
		1 &>u(T,\bar x,\bar y)=\bar u(k\tau+T,\bar y)=\psi(k \tau+ T-j\tau+jL,\bar y)=
		\frac14+e^{2(k \tau+ T-j\tau+jL)-\bar y -L}\\
		&>\frac14+e^{(2 j-1)L-\bar y},
		\end{split}\]
		we find that $\bar y>(2j-1) L$.
		
		We claim that 
		$f_1(y,\bar u(k\tau+t,y))=f_0(\bar u(k\tau+t,y))$ for $t\leq T$ and 
		$y>(2j-1) L$.
		Clearly, the claim holds if $(2j-1) L < y< 2j L$, because $f_0$ and $f_1$ coincide there.
		Take $t\leq T$ and $y\geq 2j L$.
		We see that
		$$\bar u(k\tau+t,y)\leq\frac14+e^{2(k\tau + T-j\tau+jL)-y- L}
		\leq \frac14+e^{2(\tau+jL)-y - L}
		\leq \frac14+e^{2\tau-L}=\frac12,$$
		where the last equality follows from the definition of $\tau$.
		In particular, $\bar u(k\tau+t,y)<S$ and therefore 
		$f_1(y,\bar u(k\tau+t,y))=f_0(\bar u(k\tau+t,y))$.
		This proves the claim. Thus, the function~$\psi$ being
		a strict supersolution of \eqref{eq:f0}, as seen before, we deduce that
		$\bar u$ is a (continuous) 	strict supersolution of \eqref{eq:f1}
		for $t\in(j\tau,k\tau+T]$, $x\in\R$, $y>(2j-1)L$.
		Recalling that \eqref{contact} holds with $\bar y>(2j-1) L$, 
		a contradiction follows from 
		the parabolic strong maximum principle.
		
		We have thereby shown that $u(t,x,y)<\bar u(k\tau + t,y)$ for all $t\geq0$, 
		$(x,y)\in\R^2$.
		Now, the function $\bar u$ satisfies, for $j\in\N$, $j\geq k$,
		$$\bar u(j\tau, 2jL)=\psi(jL,2jL)=
		\frac14+e^{-L}<\frac12$$
		(recall that $L>\ln4$).
		From this and the fact that $u(t,x,y)<\bar u(k\tau + t,y)$ for $t>0$,
		one easily infers that the speed of $u$ satisfies
		$$c(e_2) \leq\lim_{j\to+\infty}\frac{2jL}{j\tau}=\frac{2L}\tau.$$
		
		\smallskip	
		{\em Step 3: there exist $L,M>0$ such that $c(e_1)>c(e_2)$.}
		
		\noindent
		Take $L>8$, so that the conclusions of the Steps 1-2 hold. 
		Hence we can choose $M$ large enough in such a way 
		that $c(e_1)$ is larger than
		the upper bound $2L/\tau$ provided by the Step 2. It follows that 
		$c(e_1)>c(e_2)$. 
	\end{proof}

\begin{proof}[Proof of Proposition~\ref{prop:asymmetric}]
	Let $f_1=f_1(y,u)$ be the function provided by Proposition~\ref{pro:speeds}
	and let $c(e)$ be the speed of the 
	unique (up to shifts in time) pulsating travelling front 
	connecting $1$ to $0$ in the direction $e\in\mathbb{S}^{1}$.
	We know that $c(e_1)>c(e_2)>0$. Fix $c(e_2)<c<c(e_1)$.
	We claim that there exists a bistable reaction term $f_2=f_2(u)$ satisfying
	$f_2'(0)=-1$ and
	such that the homogeneous equation
	\Fi{eq:f2}
		\partial_t u = \Delta u + f_2(u), \quad t \in \R, \ (x,y) \in \R^2 ,
	\Ff
	admits a (unique up to shift) planar front with a speed equal to $c$.
	Such a reaction term can be obtained under the form
	$$f_2(u)=2u\Big(u-\frac12\Big)(1-u)+M'\chi_2(u),$$
	for a suitable choice of $M'$.
	Indeed, for any $M'\geq0$, \eqref{eq:f2} admits a unique planar front, see~\cite{AW},
	and it is not hard to check that its speed $c_{M'}$
	depends continuously on~$M'$.
	To conclude, we observe that $c_0=0$~\cite{AW} and that $c_{M'}\to+\infty$ as  
	$M'\to+\infty$, as we have shown in the Step 1 of the proof
	of Proposition~\ref{pro:speeds}. We point out that the proof of
	Lemma~\ref{lem:uniqueness} still works for the homogeneous equation~\eqref{eq:f2}.
	Namely, the planar front is the unique pulsating travelling front for~\eqref{eq:f2} (up to 
	shift in time or space).
		
	We can now define the reaction $f$ as follows:
	$$f(y,u):=\begin{cases}
				f_1(y,u) & \text{if }0\leq u\leq 1,\\
				f_2(u-1) & \text{if }1<u\leq 2.
			 \end{cases}$$		 
	This function is of class $C^1$ because, we recall, 
	$\partial_u f_1(y,1)=f_0'(1)=-1= f_2 ' (0)$. 
	Moreover, it is a superposition of two reaction terms which are bistable in the sense of 
	Assumption~\ref{ass:bi}, due to Lemmas \ref{lem:f0},
	\ref{lem:f1}.
	Let us show that $f$ satisfies Assumption~\ref{ass:multi} with $I=2$
	and $p_0\equiv\bar p\equiv2$, $p_1\equiv1$, $p_2\equiv0$.
	
	We claim that any periodic steady state $q$ satisfying $0<q<2$ and $q \not \equiv 1$ is linearly unstable. By Lemmas~\ref{lem:f0} and~\ref{lem:f1}, we only need to consider the case when $\min q<1<\max q$. Assume by contradiction that such a~$q$ is not linearly unstable.
	Because the equation is invariant in the direction $e_1$,
	the Step 1 of the proof of Lemma~\ref{lem:f1} implies that $q$ is $x$-independent,
	i.e., $q=q(y)$.
	On the level set $q=1$ we necessarily have that $q'\neq0$, because otherwise 
	$q\equiv1$. Then, the function $q$ being periodic,
	there exists $\eta\in\R$ such that $q(\eta)=1$ and $q'(\eta)>0$.
	Let $a<\eta$ be such that $q'(a)=0$ and $q'>0$ in $(a,\eta)$.
	Then in $(a,\eta)$ there holds that 
	$-q''=f_1(y,q)\geq f_0(q)$.
	Multiplying this inequality by $q'$ and integrating on $(a,\eta)$ we get
	$$-\frac12(q')^2(\eta)\geq\int_a^\eta f_0(q(y))q'(y) dy =\int_{q(a)}^1 f_0(u)du.$$ 
	This is impossible, because $\int_s^1 f_0>0$ for any $s\in[0,1]$, by definition of the function~$f_0$.	
	The claim is proved.

	Summing up, we know that all periodic steady states of~\eqref{eq:parabolic}
	are linearly unstable, excepted for the constant states $0,1,2$ which are linearly stable.
	As shown in the proof of Lemma~\ref{lem:f0}, between any pair of 
	linearly unstable periodic steady states $q<\tilde q$ there must exists a 
	periodic steady state which is not linearly unstable. This implies that
	Assumption~\ref{ass:multi} holds, as announced. It entails
	Assumption~\ref{ass:speeds} too, owing to Proposition~\ref{prop:counter} in the Appendix.
	
	We are in the position to apply Theorem~\ref{th:multi}.
	This provides us with a propagating terrace in any direction~$e\in\mathbb{S}^{1}$.
	Two situations may occur: either the terrace reduces to 
	one single front connecting $2$ to $0$,
	or it consists of two fronts, 
	one connecting $2$ to $1$ and the other connecting 	$1$~to~$0$.
	In the latter case, we have by uniqueness
	that the two fronts are respectively given 
	(up to translation in time)  by the unique planar front for~\eqref{eq:f2} increased by $1$,
	which has speed $c$, and by the unique pulsating front of Proposition~\ref{pro:speeds},
	having speed $c(e)$.
	This case is ruled out if $c>c(e)$ because this violates the condition on the 
	order of the speeds
	of the propagating terrace, see Definition~\ref{def:terrace}. Therefore, when $c > c(e)$ the terrace consists of a single front connecting 2 to 0, and proceeding as in the proof  of Lemma~\ref{lem:uniqueness}, one can show that this front is unique up to time shift.

	Conversely, let us show that if $c\leq c(e)$ then the case of a single front is 
	forbidden. Suppose that there exists 
	a pulsating travelling front $\tilde u$ connecting $2$ to $0$ in the direction~$e$
	with some speed $\tilde c$. Observe that the argument for the 
	uniqueness result in the proof of Lemma~\ref{lem:uniqueness}
	still works if $U_2(\cdot,-\infty)\geq1$ or if $U_1(\cdot,+\infty)\leq0$.
	Hence, on one hand,
	applying this argument with $u_1$ equal to the front connecting $1$ to $0$ and with 
	$u_2=\tilde u$ we get $\tilde c>c(e)$.
	On the other hand, taking $u_1=\tilde u-1$ and $u_2$ equal to the planar front 
	for~\eqref{eq:f2} yields $\tilde c<c$. We eventually infer that $c>c(e)$, a contradiction. Therefore, when $c \leq c(e)$, a terrace necessarily consists of two fronts, and as we pointed out above each of them is unique up to time shift.
	
	We have proved that there exists a unique 
	propagating terrace in any given direction~$e\in\mathbb{S}^{1}$
	and that it consists of two fronts if and only if $c\leq c(e)$.
	This concludes the proof of the proposition because $c(e_2)<c<c(e_1)$.
\end{proof}

\appendix
 
\setcounter{section}{1}
\setcounter{theo}{0}
\section*{Appendix}

Here we recall the order interval trichotomy of Dancer and Hess~\cite{Tri}; see also~\cite{Matano}.
\begin{theo}[\cite{Tri}]\label{DH}
Let $p<p'$ be two periodic steady states of \eqref{eq:parabolic}. Then 
one of the following situations occurs:
\begin{enumerate}[$(a)$]
	\item there is a periodic steady state $\tilde p$ satisfying $p<\tilde p<p'$,
	\item there exists an entire solution $u$ to \eqref{eq:parabolic} such that
	$(u(k,\cdot))_{k\in\Z}$ is an increasing family of periodic functions 
    satisfying
	$$u(-k,\cdot)\searrow p,\qquad 
	u(k,\cdot)\nearrow p',\qquad
	\text{as $k\to+\infty$, uniformly in }[0,1]^N,$$
	\item there exists an entire solution $u$ to \eqref{eq:parabolic} such that
	$(u(k,\cdot))_{k\in\Z}$ is a decreasing family of periodic functions 
	satisfying
	$$u(-k,\cdot)\nearrow p',\qquad 
	u(k,\cdot)\searrow p,\qquad
	\text{as $k\to+\infty$, uniformly in }[0,1]^N.$$
\end{enumerate}
\end{theo}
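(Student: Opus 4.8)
The plan is to reduce the statement to the abstract order-interval trichotomy for compact strongly monotone discrete dynamical systems, and then to translate its conclusion into a statement about entire solutions of~\eqref{eq:parabolic}; the substantive input, the abstract trichotomy itself, is the cited result of Dancer--Hess, so on the PDE side the work is just to check its hypotheses and to convert a time-discrete orbit into an entire solution.

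First I would work in the Banach space $X$ of continuous $[0,1]^N$-periodic functions, ordered by the cone $X_+$ of nonnegative functions, whose interior consists of the strictly positive functions, and introduce the time-one map $T[w]:=v(1,\cdot;x\mapsto w(x))$. By the parabolic comparison principle the order interval $[p,p']:=\{w\in X:p\le w\le p'\}$ is invariant under $T$, and $p$, $p'$ are fixed points of $T$; by parabolic (Schauder) estimates $T$ is compact on $X$, since it maps bounded subsets of $X$ into bounded subsets of $C^{2,\alpha}_{per}$; and by the parabolic strong maximum principle $T$ is strongly monotone, that is, $w_1\le w_2$ with $w_1\not\equiv w_2$ implies $T[w_2]-T[w_1]\in\mathrm{int}\,X_+$, so in particular $p=T[p]\ll T[p']=p'$. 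These are precisely the hypotheses under which the Dancer--Hess trichotomy~\cite{Tri} (see also~\cite{Matano}) applies to $T$ on $[p,p']$ and yields one of: $(a')$ a fixed point $\tilde p$ of $T$ with $p<\tilde p<p'$; $(b')$ a bi-infinite $T$-orbit $(u_k)_{k\in\Z}$ with $p\ll u_k\ll u_{k+1}\ll p'$, $u_k\to p$ as $k\to-\infty$ and $u_k\to p'$ as $k\to+\infty$ uniformly; $(c')$ the analogous strictly decreasing orbit with the two limits exchanged. The proof of this abstract statement, which I would quote rather than reprove, rests on the dichotomy that an equilibrium either attracts a one-sided order-interval neighbourhood of itself or gives rise to a strictly monotone orbit emanating from it, applied at $p$ and at $p'$, together with a connectedness (fixed-point-index) argument forcing an intermediate fixed point when both endpoints are one-sided attractors.

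Then I would translate back. In case $(a')$, the solution $v(t,\cdot;\tilde p)$ is periodic in space and $1$-periodic in time, hence stationary by Proposition~\ref{prop:energy1}, so $\tilde p$ is a periodic steady state with $p<\tilde p<p'$: this is~$(a)$. In case $(b')$, define $u(t,x):=v(t-k,x;u_k)$ for $t\in[k,k+1]$, $k\in\Z$; the compatibility $v(1,\cdot;u_k)=T[u_k]=u_{k+1}$ makes $u$ a well-defined entire solution of~\eqref{eq:parabolic} with $u(k,\cdot)=u_k$, whence $(u(k,\cdot))_{k\in\Z}$ is an increasing family of periodic functions with $u(-k,\cdot)\searrow p$ and $u(k,\cdot)\nearrow p'$ uniformly: this is~$(b)$. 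Case $(c')$ gives~$(c)$ in the same way. (Continuous-in-time monotonicity of $u$, if one wanted it, would follow from $u(1,\cdot)\ge u(0,\cdot)$ and the comparison principle, but it is not required by the statement.)

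The only genuine difficulty lies inside the cited abstract theorem: promoting a one-sided monotone trajectory starting near an equilibrium to a full bi-infinite connecting orbit with the correct asymptotics at the other end, and handling the situation where the relevant equilibrium is only weakly (non-linearly) stable, which must be argued purely through monotonicity and compactness rather than by linearisation. Once that framework is in place, the reduction from $T$-orbits to entire solutions of~\eqref{eq:parabolic} is routine.
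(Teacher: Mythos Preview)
Your proposal is correct and is essentially the standard way one instantiates the abstract Dancer--Hess trichotomy in the present setting. Note, however, that the paper does not actually prove Theorem~\ref{DH}: it is merely recalled in the Appendix as a cited result from~\cite{Tri} (see also~\cite{Matano}), with no argument supplied. What you have written is thus a sketch of how one would derive the PDE statement from the abstract discrete-time result, which the paper takes for granted; your verification of the hypotheses (compactness of the time-one map via parabolic estimates, strong monotonicity via the strong maximum principle) and your use of Proposition~\ref{prop:energy1} to turn a fixed point of~$T$ into a genuine steady state are all correct and in the spirit of the paper.
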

This trichotomy plays a crucial role in our proofs, as it allows us to look at 
multistable equations as juxtapositions of monostable problems.
Owing to Theorem~\ref{thm:mono}
quoted from Weinberger~\cite{W02},
we infer the existence of the minimal speeds of fronts above and below
any unstable steady state $q$.
In Assumption~\ref{ass:speeds} we require that such speeds are strictly ordered. 
In the next proposition we show that a sufficient condition guaranteering
this hypothesis is that 
$q$ is linearly unstable. We also point out for completeness
that the order between the speeds is always true in the large sense.
\begin{prop}\label{prop:counter}
Assume that $u \mapsto f (x,u)$ is of class $C^1$.

Under either Assumption \ref{ass:bi} or \ref{ass:multi}, and with the notation of 
Assumption~\ref{ass:speeds},
for any unstable periodic steady state $q$ between $0$ and $\bar p$ and any $e\in\mathbb{S}^{N-1}$, there holds~that
$$\overline{c}_q \geq 0 \geq \underline{c}_q.$$
Moreover, if $q$ is linearly unstable, then
$$\overline{c}_q > 0 > \underline{c}_q.$$
\end{prop}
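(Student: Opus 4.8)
The plan is to first reduce the statement, then settle the strict inequality, and finally the non‑strict one.

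\textbf{Reduction.} Both inequalities are symmetric under the change $f(x,u)\mapsto -f(x,-u)$, which preserves Assumptions~\ref{ass:bi}/\ref{ass:multi}, preserves the $C^1$ regularity and linear (in)stability of the corresponding intermediate state, and, as explained just before Assumption~\ref{ass:speeds}, sends pulsating fronts connecting $p_+$ to $q$ in the direction $e$ to pulsating fronts connecting $q$ to $p_-$ in the direction $e$ with the opposite speed; thus it exchanges $\overline c_q$ with $-\underline c_q$. It therefore suffices to prove $\overline c_q\geq 0$ in general, and $\overline c_q>0$ when $q$ is linearly unstable. Throughout I use, exactly as in Section~\ref{sec:discreteTF}, that by Weinberger's Theorem~\ref{thm:mono} the quantity $\overline c_q$ is the spreading speed of the monostable problem obtained by restricting \eqref{eq:parabolic} to the order interval $[q,p_+]$, and that pulsating fronts connecting $p_+$ to $q$ in the direction $e$ exist precisely for the speeds $c\geq\overline c_q$.

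\textbf{Strict inequality when $q$ is linearly unstable.} Let $\lambda_q>0$ be the periodic principal eigenvalue of $\mathcal L_q$ and $\varphi_q>0$ the associated periodic eigenfunction. The plan is to exhibit a moving bump above $q$ that is a subsolution of \eqref{eq:parabolic}: a standard lower barrier of the form $\underline u(t,x):=q(x)+\eps\,\varphi_q(x)\,\zeta(x\cdot e-\delta t)$, where $\zeta$ is a fixed suitable profile (compactly supported, or an exponentially truncated one as in KPP‑type constructions) and $\eps,\delta>0$ are small. Inserting this and expanding $f(x,q+w)=f(x,q)+f_u(x,q)\,w+o(w)$ (uniformly, by continuity of $f_u$ on the relevant compact range), the dominant contribution is the positive surplus $\eps\,\lambda_q\varphi_q\zeta$, while the advective term coming from $\partial_t$ is $O(\eps\delta)$, the terms where spatial derivatives hit $\zeta$ are controlled by the profile choice, and the nonlinear remainder is of size $\eps\,\omega(\eps)\varphi_q\zeta$ with $\omega(\eps)\to0$; hence, fixing $\zeta$ and then taking $\delta$ and $\eps$ small enough, $\underline u$ is a genuine generalised subsolution, and $\underline u\leq p_+$ since $q<p_+$ strictly and periodically. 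By comparison the solution with datum $\max\{q,\underline u(0,\cdot)\}$ stays above $\underline u(t,\cdot)$; since this bump advances at speed $\delta$, and once a solution exceeds a fixed threshold on a large enough set it is driven to $p_+$ there (by the monostable property on $[q,p_+]$), the solution saturates to $p_+$ on a region advancing at speed at least $\delta$. Comparing with any pulsating front connecting $p_+$ to $q$ forces its speed to be $\geq\delta$, so $\overline c_q\geq\delta>0$. (Equivalently, this is the statement that the monostable spreading speed dominates the linearised spreading speed $c_{\mathrm{lin}}=\min_{\mu>0}k(\mu)/\mu$, which is positive precisely because $k(0)=\lambda_q>0$.)

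\textbf{Non‑strict inequality, and the main obstacle.} Here linear instability may fail, and I would argue by contradiction: if $\overline c_q<0$, then for every $c\in(\overline c_q,0)$ there is a monotone pulsating front $U_c(x,x\cdot e-ct)$ connecting $p_+$ to $q$; viewed as an entire solution $u_c(t,\cdot)$, the sign $c<0$ forces $u_c(t,\cdot)\to q$ as $t\to+\infty$ and $u_c(t,\cdot)\to p_+$ as $t\to-\infty$, locally uniformly. The intended mechanism is to show that such an ``extinction'' entire solution cannot coexist with the instability of $q$: using the spatially periodic, time‑increasing entire solution connecting $q$ to $p_+$ supplied by case $(b)$ of the order interval trichotomy (Theorem~\ref{DH}, whose applicability between $q$ and $p_+$ was established after Theorem~\ref{thm:mono}), one builds, by a comparison/sliding argument, a uniform barrier over $q$ that still converges uniformly to $q$, placing a full $L^\infty$‑neighbourhood of $q$ in its basin of attraction — contradicting instability. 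The hard part, and the reason the sharp assertion $\overline c_q\neq0$ (without linear instability) is left open, is precisely closing this comparison: a single front of non‑positive speed keeps its $p_+$‑plateau as $x\cdot e\to-\infty$, so none of its translates converges uniformly to $q$, and combining it with the periodic connecting orbit only yields the non‑strict bound. A genuinely sharp statement would seem to require a periodic, multi‑dimensional substitute for the one‑dimensional energy identity $c\int_{\R}|U'|^2=\int_q^{p_+}f$, which is not available in this generality.
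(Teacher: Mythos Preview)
Your reduction by symmetry matches the paper's. For the linearly unstable case your route differs: the paper argues by contradiction, comparing a hypothetical front of speed $c\leq0$ with the time-growing subsolution $q+\varphi\,e^{\frac12\lambda(r)t}$ built from the Dirichlet principal eigenfunction on a large ball $B_r$ (where $\lambda(r)>0$ since $\lambda(r)\to\lambda_q$). Your parenthetical --- that the monostable spreading speed dominates $c_{\mathrm{lin}}=\min_{\mu>0}k(\mu)/\mu$, which is positive because $k(0)=\lambda_q>0$ and $k(\mu)\sim C\mu^2$ at infinity --- is a legitimate alternative. The explicit ``moving bump'' with a generic compactly supported $\zeta$, however, is not quite a subsolution as written: at the edges of $\supp\zeta$ the surplus $\lambda_q\varphi_q\zeta$ vanishes while the cross term $\zeta'\big[(A\nabla\varphi_q)\cdot e+\text{div}(\varphi_q Ae)\big]$ does not, and its sign is uncontrolled in a heterogeneous medium.

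The genuine gap is the non-strict inequality in the degenerate case $\lambda_q=0$. You correctly bring in the Dancer--Hess connecting orbit $u$ between $q$ and $p_+$, but your ``barrier converging uniformly to $q$'' cannot be assembled from $u$ and a negative-speed front: $u$ increases to $p_+$ in time, and the front retains its $p_+$-plateau as $x\cdot e\to-\infty$, so no combination of translates converges to $q$ uniformly in space as $t\to+\infty$. The paper's missing ingredient is an explicit subsolution, not a supersolution. For $\sigma>0$ let $(\lambda_\sigma,\varphi_\sigma)$ be the periodic principal eigenpair of the conjugated operator $\mathcal L_\sigma:=e^{-\sigma x\cdot e}\,\mathcal L_q\,(e^{\sigma x\cdot e}\,\cdot\,)$, and set
\[
\psi(t,x):=u(t,x)-\varphi_\sigma(x)\,e^{\sigma(x\cdot e+\eps t)}.
\]
The gauge $w\mapsto w\,e^{-\sigma x\cdot e}$ shows that the Dirichlet principal eigenvalue of $\mathcal L_\sigma$ on any ball equals that of $\mathcal L_0$; since the latter increases to $\lambda_0=0$, one gets $\lambda_\sigma\geq0$, and then $\lambda_\sigma=O(\sigma^2)$ by regularity of the eigenvalue in~$\sigma$. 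Choosing $\sigma=\eps^2$ makes $\frac12\sigma\eps-\lambda_\sigma>0$ for small $\eps$, and a direct computation (using $u$ close to $q$ for $t$ very negative) shows that $\psi$ is a subsolution of \eqref{eq:parabolic} in the region $\{\psi>q\}$. A hypothetical front of speed $c<-\eps$ can then be placed above $\psi$ for $t$ very negative (its interface has receded far to the right, while $\psi\approx u\approx q$), and comparison up to a fixed later time contradicts the normalisation of the front. The exponential correction is precisely what injects the directional structure in $e$ that a bare sliding between $u$ and the front lacks.
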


\begin{proof}
We show the inequalities for $\overline{c}_q$, the ones for $\underline{c}_q$
follow by considering the nonlinear term $-f(x,-u)$ and the direction $-e$.

We recall that $\overline{c}_q$ is the minimal speed  of fronts in the direction 
$e$ connecting $p_{i_1}$ to~$q$, where
$p_{i_1}$ is the smallest stable periodic steady state lying above $q$.
Let $\lambda_0$ denote 
the periodic principal eigenvalue of the linearized operator
$$\mathcal{L}_0w := \text{div} (A(x)\nabla  w ) + 
 \partial_u f (x, q(x)) w.$$
The instability of $q$ implies that $\lambda_0\geq0$. We distinguish two cases.

\smallskip
 {\em Linearly unstable case: $\lambda_0>0$}.
\newline
Because the operator $\mathcal{L}_0$ is self-adjoint,
it is well-known that $\lambda_0$ can be approximated by the 
Dirichlet principal eigenvalue
of $\mathcal{L}_0$ in a large ball (see, e.g., \cite[Lemma 3.6]{BHRoques}). Namely, calling 
$\lambda(r)$ the 
principal eigenvalue of $\mathcal{L}_0$ in $B_r$ with Dirichlet boundary condition,
there holds that $\lambda(r)\to\lambda_0$ as $r\to+\infty$. 
Then we can find $r$ large enough so that $\lambda(r)\!>\!0$.
Let $\varphi$ be the associated principal eigenfunction. 
The function~$\psi$ defined~by 
$$\psi(t,x):=q(x)+\varphi(x)e^{\frac12\lambda(r)t},$$  
satisfies for $t\in\R$, $x\in B_R$,
$$\partial_t\psi- \text{div} (A(x) \nabla \psi )= f (x,q)
+\Big(\partial_u f(x,q)-\frac12\lambda(r)\Big)\varphi(x)e^{\frac12\lambda(r)t}.$$
Hence, by the $C^1$ regularity of $u\mapsto f(x,u)$, there exists $T\in\R$ such that 
$\psi$ is a subsolution of~\eqref{eq:parabolic} for $t\leq T$, $x\in B_r$.
Up to reducing $T$, we further have that $\psi < p_{i_1}$ for all $t \leq T$.

Assume by way of contradiction that \eqref{eq:parabolic} admits a pulsating front
$U ( x , x\cdot e -ct)$ connecting $p_{i_1}$ to $q$ with a speed $c\leq0$. 
Let $\xi\in\Z^N$ be such that $U ( \xi , \xi\cdot e-cT)<\psi(T,0)$.
Observe that $U ( x , x\cdot e -ct)$ is bounded from below away from $q$ for
$t\leq T$ and $x\in\overline B_r(\xi)$, because $c\leq0$ and 
$U(\cdot,-\infty)\equiv p_{i_1}>q$.
We can then find $T'<T$ such that 
$$\forall x\in\overline B_r(\xi),\quad
U ( x , x\cdot e -cT')>\psi(T',x-\xi).$$ 
Because $\psi(t,x-\xi)$ is a subsolution of~\eqref{eq:parabolic}
for $t<T$ and $x\in B_r(\xi)$,
which is equal to $q (x)$ for~$x\in\partial B_r(\xi)$,
the comparison principle eventually yields
$$\forall\,T'\leq t\leq T,\ x\in\overline B_r(\xi),\quad
U ( x , x\cdot e -ct)>\psi(t,x-\xi),$$ 
contradicting $U ( \xi , \xi\cdot e-cT)<\psi(T,0)$. 
This shows that $\overline{c}_q > 0$ in this case.

\smallskip

 {\em Case $\lambda_0=0$}.
 \newline
The definition of~$p_{i_1}$, together with either Assumption~\ref{ass:bi} or~\ref{ass:multi},
imply that the case~$(b)$ is the only possible one in
Theorem~\ref{DH} with $p=q$ and $p'=p_{i_1}$. 
Let $u$ be the corresponding entire solution.
For $\sigma\in\R$, let
$\lambda_{\sigma}$ and $\varphi_{\sigma}$ denote 
the periodic principal eigenvalue and eigenfunction of the operator
$$\mathcal{L}_{\sigma} w := \text{div} (A(x)\nabla  w ) + 
2\sigma e A(x)\nabla  w +\left( \sigma^2 e A(x) e +
\sigma \text{div} (A(x)e) +
 \partial_u f (x, q(x)) \right) w.$$
Fix $\eps>0$. We define the following function:
$$\psi(t,x):=u(t,x)-\varphi_{\sigma}(x) e^{\sigma(x\cdot e+\eps t)}.$$
We compute
$$\partial_t \psi- \text{div} (A(x) \nabla \psi ) =  f (x,u)
-[\sigma\eps+\partial_u f(x,q)-\lambda_{\sigma}]\varphi_{\sigma}(x) 
e^{\sigma(x\cdot e+\eps t)}.$$
For $\sigma>0$, there exists $\delta>0$ depending on $\eps,\sigma$
such that $q+\delta<p_{i_1}$ and moreover, for $0\leq s_1\leq s_2\leq\delta$,
there holds that
$$\forall x\in[0,1]^N,\quad 
f(x,q(x)+s_2)-f(x,q(x)+s_1)\leq\Big(\partial_u f(x,q(x))+\frac12\sigma\eps\Big)(s_2-s_1).$$
Then take $k\in\Z$, also depending on $\eps,\sigma$, in such a way that
$$\forall t\leq k,\ x\in[0,1]^N,\quad
u(t,x)\leq q(x)+\delta.$$
We deduce that, for $t<k$ and $x\in\R^N$ such that $\psi(t,x)>q(x)$, the following holds:
$$\partial_t \psi- \text{div} (A(x) \nabla \psi )\leq f (x,\psi)-
\-\left[\frac12\sigma\eps-\lambda_{\sigma}\right]
\varphi_{\sigma}(x) e^{\sigma(x\cdot e+\eps t)}.$$
Now, for $r>0$, call as before $\lambda(r)$ and $\varphi$ the Dirichlet
principal eigenvalue and eigenfunction of $\mathcal{L}_0$ in $B_r$.
Direct computation shows that for $\sigma\in\R$,
$\varphi(x) e^{-\sigma x\cdot e}$ is the Dirichlet principal eigenfunction
of $\mathcal{L}_{\sigma}$ in $B_r$, with eigenvalue $\lambda(r)$. 
It follows that $\lambda(r)<\lambda_\sigma$, because otherwise
$\varphi_{\sigma}$ would contradict the properties of this principal eigenvalue.
Because $\lambda(r)\to\lambda_0=0$ as $r\to+\infty$, 
we deduce that $\lambda_\sigma\geq\lambda_0=0$. 
Namely, $\sigma \mapsto \lambda_\sigma$ attains its minimal value $0$ at
$\sigma=0$ and thus, being regular (see \cite{Kato}) it satisfies
$\lambda_{\sigma}\leq C\sigma^2$ for some $C>0$ and, say, $|\sigma|\leq1$
(this inequality can also be derived 
using the min-max formula of~\cite[Theorem~2.1]{Nadin}).
As a consequence, taking $\sigma=\eps^2$ we find that, for $\eps$ smaller than
some $\eps_0$, the function~$\psi$ is a subsolution of~\eqref{eq:parabolic} for the values~$(t,x)$
such that $t<k$ and $\psi(t,x)>q(x)$.

Assume now by contradiction that there is a pulsating front
$U ( x , x\cdot e -ct)$ connecting~$p_{i_1}$ to $q$ with a speed $c<-\eps$
and $\eps<\eps_0$.
Up to translation in time, it is not restrictive to assume that
$U ( 0 , -ck)<u(k,0)$.
Let $R\in\R$ be such that $U(x,z)>q+\delta$ for $x\in\R^N$ and $z\leq R$.
It follows that $U ( x , x\cdot e -ct)\geq\psi(t,x)$ for $t\leq k$ and $x\cdot e-ct\leq R$.
On the other hand, we see that
$$\forall t<k,\ x\cdot e-ct\geq R,\quad
\psi(t,x)\leq\delta-\left(\min\varphi_{\sigma}\right)e^{\eps^2(R+ct+\eps t)}.
$$
The right-hand side goes to $-\infty$ as $t\to-\infty$ because $c+\eps<0$.
We can then find $T<k$ such that $U ( x , x\cdot e -ct)\geq\psi(t,x)$ for 
all $t\leq T$ and $x\in\R^N$. Hence, because $U>q$, we can apply
the comparison principle and infer that 
$U ( 0 , -ck)\geq u(k,0)$, which is a contradiction.
We have shown that fronts cannot have a speed smaller than $-\eps$,
for $\eps$ sufficiently small, whence $\overline{c}_q \geq 0$.
\end{proof}


%

\end{document}